\algnewcommand\algorithmicinput{\textbf{INPUT:}}
\algnewcommand\INPUT{\item[\algorithmicinput]}
\algnewcommand\algorithmicoutput{\textbf{OUTPUT:}}
\algnewcommand\OUTPUT{\item[\algorithmicoutput]}
\newcommand{\ou}{\"{o}}
\newtheorem{definition}{Definition}
\newtheorem{lemma}{Lemma}
\newtheorem{proposition}{Proposition}
\newtheorem{corollary}{Corollary}
\newtheorem{remark}{Remark}
\newtheorem{theorem}{Theorem}
\newcommand{\dk}{\left(\frac{\delta}{\mathcal K}\right)^{1/\alpha}}
\newcommand{\dddelta}{\left(\frac{\delta}{\mathcal K}\right)^{1/\alpha}}
\newcommand{\C}{\mathcal C}
\newcommand{\K}{\mathcal K}
\newcommand{\hatsplit}{\widehat {\lambda^*}}
\begin{document}

\title{DBSCAN: Optimal Rates For Density-Based Cluster Estimation}
\author[1] {Daren Wang}
\author[2]{Xinyang Lu}
\author[3]{Alessandro Rinaldo}
 \affil[1]{\small Department of Statistics,  University of Chicago}
 \affil[2]{\small Mathematical Sciences Department,
       Lakehead University}
\affil[3]{\small Department of Statistics and Data Science,  Carnegie Mellon University}

\maketitle
\begin{abstract}
	
	We study the problem of optimal estimation of the density cluster tree under various smoothness assumptions on the underlying density.
	Inspired by the seminal work of 
\cite{chaudhuri2014consistent},
we formulate a new notion of clustering consistency which is better suited
to smooth  densities, and derive minimax rates for cluster tree
estimation under H\"{o}lder smooth densities of arbitrary degree. 	
	We present a computationally efficient, rate optimal cluster tree estimator based on simple extensions of the popular DBSCAN algorithm of \cite{ester1996density}.
	Our procedure relies on kernel density estimators and returns a sequence of nested random geometric graphs whose connected components form a hierarchy of clusters.
	The resulting optimal  rates for cluster tree estimation 
	depend on the degree  of smoothness of the underlying density and, interestingly,  match the minimax
	rates for density estimation under the sup-norm loss.  Our results complement and extend the analysis of the DBSCAN algorithm in \cite{sriperumbudur2012consistency}.
	Finally, we consider level set estimation and cluster consistency for densities with jump
	discontinuities. We demonstrate that the  DBSCAN algorithm  attains the minimax rate in terms of the jump size and sample size in this setting as well.
		\end{abstract}
	\
	\\
	{\bf Keywords:} DBSCAN; density-based clustering;  cluster tree; minimax optimality; H\"{o}lder smooth density.

	\section{Introduction}
	\label{section:introduction}

Clustering is one of the most basic and fundamental tasks in statistics and
machine learning, used
ubiquitously and extensively in the exploration and analysis of data.  The
literature on this topic is vast, and practitioners have at their disposal a
multitude of algorithms and heuristics to perform clustering on data of
virtually all
types. However, despite its importance and popularity, rigorous statistical
theories for clustering, leading to inferential procedures with provable
theoretical guarantees, have been
traditionally lacking in the literature. As a result, the practice of clustering, a central
tasks in the analysis and manipulation of data, still relies in many cases on methods and
heuristics of unknown or even dubious scientific validity. One of the most
striking instances of such a disconnect is the DBSCAN  algorithm  of \cite{ester1996density}, an
extremely popular
and relatively efficient  \citep[see][]{DBSCAN:revisited,wang2015design}  clustering methodology whose statistical properties
have been properly analyzed only very recently: see
\cite{sriperumbudur2012consistency}, \cite{jiang2017density} and \cite{ingo.bharath.new}.

In this paper, we provide a complementary and thorough study of DBSCAN, and show that this simple algorithm can deliver optimal statistical performance in 
{\it density-based} clustering. Density-based clustering \citep[see, e.g.,][]{hartigan1981consistency} provides a
general and rigorous 
probabilistic framework in which the clustering task is well-defined and
amenable to statistical analysis.  Given a  probability distribution $P$ on
$\mathbb{R}^d$ with  a corresponding continuous  density $p$ and a fixed threshold $\lambda \geq 0$,
the {\it $\lambda$-clusters} of $p$ are the connected components of  the upper
	$\lambda$-level set of $p$, the set  $\{ x \in \mathbb{R}^d \colon p(x) \geq \lambda \}$
of all points whose density values exceed the level $\lambda$. With this
definition, clusters are the high-density
regions, subsets of the support of $P$ with the largest probability content among
all sets of the same
volume.

As noted in \cite{hartigan1981consistency}, the hierarchy of
inclusions of all
clusters of $p$	is a tree structure indexed by
$\lambda > 0$, called the {\it
	cluster tree} of $p$. The chief goal of density clustering is to estimate the cluster tree of $p$, given an
i.i.d. sequence $\{X_i\}_{i=1}^n$ of points with common distribution $P$. 
A cluster tree estimator is also a tree structure, consisting
of a hierarchy of nested
subsets of the sample points, and typically relies on non-parametric
estimators of $p$ in order to determine which sample points belong to
high-density regions of $p$. A cluster tree estimator is deemed accurate
if, with high probability,  the hierarchy of clusters it encodes is close to the hierarchy that would have been  obtained should $p$ be known.

Density-based clustering, an instance of hierarchical clustering, enjoys several advantages: (1)
it imposes virtually no restrictions on the shape, size  and number of
clusters, at any level of the tree; 
(2) unlike {\it flat} (i.e.
non-hierarchical) clustering, it does not require a pre-specified number of
clusters as an input and in fact the number of clusters itself is a  quantity that may change depending on the level
of the tree; (3) it provides a multi-resolution representation of all the clustering
features of $p$ across all levels $\lambda$ at the same time; (4) it allows for an efficient representation and storage of the entire
tree of clusters with a compact data structure that can be easily accessed
and queried, and (5) the main object of interest for inference, namely  the cluster tree of $p$,
is a well-defined quantity.

Despite  the appealing properties of the density-based clustering framework,
a rigorous quantification of the statistical performance of this type of 
algorithms has proved difficult. Previous results by 
\cite{hartigan1981consistency} and then  \cite{penrose:95} have
demonstrated a weaker notion of consistency achieved by the popular
single-linkage algorithm. More recently \cite{chaudhuri2014consistent} have
developed a general framework for defining consistency of cluster tree
estimators based
on a separation criterion among clusters. The authors  further 
demonstrated that two graph-based algorithms, both based on $k$-nearest
neighbors graphs over the sample points,  achieve such consistency and provided minimax optimal consistency rates with respect to the  parameters specifying the amount of cluster separation. Such results hold with virtually no assumptions on the underlying
density. However, because of this generality, these consistency rates do not directly reflect any degree
of regularity or smoothness of the underlying density. In particular, it remains unclear whether cluster tree estimation would be easier with smoother densities.

In this paper we provide further contributions to the theory of density based
clustering by deriving novel, nearly minimax-optimal rates for cluster tree estimation that depend
explicitly on the smoothness of the underlying density function. Our results further confirm  that the smoother the density the faster the rate of consistency for the cluster tree estimation problem, a finding that is consistent with analogous results about non-parametric density estimation. Interestingly, our rates match those for estimating smooth densities in the
$L_\infty$ norm. To the best of our knowledge, this finding and the implication that density based clustering is no easier -- at least in our setting -- than density estimation, has not been rigorously shown before. In order to account explicitly for the smoothness of the density,  we have developed a new criterion for
cluster consistency
that is  better suited for smooth densities. In terms of procedures, we consider cluster tree estimators that arise from applying a very simple generalization of the well-known DBSCAN algorithm  and are computationally efficient. Furthermore, our DBSCAN-based estimator is minimax optimal over arbitrary smooth
densities according to our notion of consistency under appropriate conditions. 


\subsection*{Related work}

The idea of using the probability density function in order to study clustering structure dates back to 
  \cite{hartigan1981consistency}, who formalized 
the notion of clusters as the 
connected components of high density regions and of cluster tree. Much of the subsequent theoretical work focused on consistency for ``flat'' 
clustering at a fixed level, which effectively reduces to level set estimation. The literature on this topic is vast and offers  a multitude of results covering different settings and metric for consistency. See, e.g., \cite{penrose:95} \cite{polonik1995measuring}, \cite{tsybakov1997nonparametric}, \cite{cuevas1997plug}, \cite{ba2000set}, \cite{klemela2004complexity}, \cite{willett2007minimax}, \cite{hausdorff}, \cite{rigollet2009optimal}, \cite{rinaldo2010generalized}.
In contrast, there have been fewer contributions to the theory of practice of cluster tree estimation: see, e.g., \cite{ stuetzle2003estimating, stuetzle2010generalized}, \cite{klemela:09} and \cite{rinaldo2012stability}. The work of \cite{chaudhuri2014consistent} (see also \cite{kpotufe2011pruning}) represented a significant advance in the theory of density-based clustering, as it derived a new framework and consistency rates for cluster tree estimation. \cite{balakrishnan2012} generalized these results  to the  probability 
distributions supported over well-behaved manifolds, with consistency rates 
depending on the reach of the manifold and its intrinsic dimension. Corresponding  guarantees in Hausdorff distance have been recently obtained by \cite{jiang2017density}.
\cite{eldridge2015beyond} developed a unified theory for consistency in cluster tree estimation that encompasses the original framework of Hartigan while \cite{inference.tree} investigated the challenging problems of defining adequate metrics over the space of cluster tree and  of constructing confidence sets for cluster tree structures. \cite{chen2016density} provides bootstrap-based methods for constructing confidence sets for density level sets and for visualization of high-density clusters.   Recently, 
 \cite{jang2018dbscan++} proposed  a variant of the   DBSCAN algorithm with  both minimax clustering rate and sub-quadratic computational complexity while \cite{jiang2019robustness} studied DBSCAN  under possibly adversarial contamination of the input data.

In a parallel and important line of work, \cite{pmlr-v19-steinwart11a,steinwart:15} developed  a rigorous, measure-theoretic approach to density-based clustering whereby the cluster tree is recovered by estimating the lowest split level of the density and then proceeding recursively. The corresponding results demonstrate a direct link between density based clustering and optimal level set estimation. 
This approach was applied in \cite{sriperumbudur2012consistency} to show that the DBSCAN algorithm yield consistent estimator of density trees, a result that was then extended in \cite{ingo.bharath.new}  to allow for  more general, KDE-based procedures.
Our work built directly upon the contributions of \cite{chaudhuri2014consistent} and \cite{steinwart:15}.

\subsubsection*{Organization of the paper}

The rest of the paper is organized as follows. In \Cref{section:background}, we describe the DBSCAN algorithm and establish its connections with non-parametric density estimation.
In \Cref{section:holder} we introduce  a   new 
	notion of cluster consistency, called $\delta$-consistency that is tailored to H\"{o}lder-continuous densities.  We describe a  DBSCAN-based algorithm for clustered tree estimation that is computational efficient and delivers nearly optimal minimax rates that depend explicitly on the degree of smoothness of the underlying density, whereby cluster tree of smoother densities can be estimated at faster rates. Interestingly and, perhaps surprisingly, for the class of DBSCAN-based algorithms we consider, we observe a trade-off between statistical optimality and computational cost for smoother H\"{o}lder densities of degree $\alpha>1$. In these situations, minimax rates can still be achieved by our computationally efficient algorithm provided that the underlying density satisfies additional geometric regularity conditions around the split levels. Such conditions are relatively mild and have been exploited before; see in particular \cite{steinwart:15}.
Finally, in \Cref{section:gap} we consider a different scenario in which the
	underlying density exhibits jump discontinuities.  We are particularly interested in level set and cluster estimation at the jump, with the assumption that the size of the discontinuity is vanishing when $n \rightarrow \infty$ so that clustering becomes increasingly difficult. We show that, with suitable inputs, the DBSCAN algorithm returns a Devroye-Wise type of estimator which is minimax optimal for cluster recovery and level set estimation. In addition, we derive the minimax scaling for the size of the jump discontinuity.


\subsubsection*{Notation}
We denote with $p $  a  density for the distribution $P$ of the i.i.d. sample $\{X_i\}_{i=1}^n \subset \mathbb R^d$.
For a constant $\lambda > 0$, we set $L(\lambda) = \{p\ge \lambda\}$ to be the $\lambda$-upper upper level set of the density $p$. 
We use $T_p$ to denote the cluster tree generated by the density $p$  and 
$\widehat T$ to density any estimator of $T_p$.
We   use
subscript $n$ to emphasize any global variable which may change with respect to $n$.  
$\mathcal L$ represents the   Lebesgue measure in $\mathbb R^d$ and
$B(x,r)$ the closed $d$ dimensional Euclidean ball centered at $x$ with radius $r$ and 
$V_d= \mathcal L(B(0,1))$  the volume of the unit ball $B(0,1)$.   For a vector $x$ we denote with  $\| x\|$ and $\| x\|_{\infty}$ its Euclidean and $L_\infty$  norms, respectively. With a slight abuse of notation, if $f$ is  a real valued function defined over a subset $S$ of $\mathbb{R}^d$, we let $\| f \|_\infty = \sup_{x \in S} | f(x)|$ its $L_\infty$ norm. 
 For any $h>0$ and a measurable set $A\subset \mathbb R^d$ 
we set 
\begin{equation}
	\label{eq:in and out}A_{h} = \bigcup_{x\in A} B(x,h)  \quad \text{and} \quad  A_{-h} = \{x\in A : B(x,h) \subset A\} .
\end{equation}
For any two real sequences $\{a_n\}_{n=1}^\infty$ and $\{b_n\}_{n=1}^\infty$ we write 
$a_n =O(b_n) $ if there exists $C>0$ such that $\lim\sup_{n\to \infty} |a_n / b_n| <C $ and write 
$a_n = \Theta(b_n)  $ if $a_n =O(b_n)$ and $b_n = O(a_n)$. For any two closed subsets $A$ and $B$ of $\mathbb R^n$, we use $d(A, B) = \inf_{x \in A, y \in B} \| x - y \| $ to represent the ordinary distance between them.

\section{Cluster Trees Estimation}

Let $P$ be a probability distribution with a  continuous\footnote{Density based clustering does not require in general continuous densities.}  Lebesgue density
$p$ and with support $\Omega \subset
\mathbb{R}^d$.
	For any $\lambda \geq
	0$, let $  \{ x \in \Omega \colon p(x) \geq \lambda\}$
	be the $\lambda$-upper level set of $p$ and  the
	$\lambda$-cluster of $p$ are  the connected components of $L(\lambda)$. 
See \Cref{section:topology}  for definition of connectedness.
Notice that the set of all
clusters is an indexed collection of subsets of $\Omega$, whereby  each cluster
of $p$ is assigned the index $\lambda$ associated to the corresponding
super-level set $L(\lambda)$, and that many clusters may be indexed by the
same level $\lambda$.
	The cluster
	tree of $p$ is the collection $T_p$ of all clusters of $p$, that is 
	$$ T_p =\{ L(\lambda) \}_{\lambda \ge 0}.$$
	 We can  think of  the cluster tree of $p$ as the function defined on $[0,\infty)$ and 
	for each $\lambda \geq 0$, it returns the set of $\lambda$-clusters of $p$.
Thus,
$T_p(\lambda)$ consists  of disjoint connected subsets of $\Omega$.
 We remark that, since the density $p$ is unique only up to sets of Lebesgue measure zero, the  cluster tree $T_p$ is also not unique. In fact, \cite{steinwart:15} shows that there exists a well-defined notion of  cluster tree for the distribution $P$ that is independent of the choice of the density. Furthermore, if $P$ admits an upper semi-continuous density $p$, then 
the cluster tree is in fact composed of the hierarchy of the (closures of the) upper level sets of such density. As $P$ is assumed, throughout most of the article, to have a density that is continuous everywhere on its support, when we speak of ``the'' density of $P$, we will refer to this canonical choice.

The concept of cluster tree owes its name to the easily verifiable property (see
\cite{hartigan1981consistency})  that if $A$
and $B$ are elements of $T_p$, i.e.  distinct clusters of $p$, then $A \cap B
= \emptyset$ or $A
\subseteq B$ or $B \subseteq A$. This induces a partial order on the set of
clusters. In particular, for any $\lambda_1 \geq
\lambda_2 \geq 0$, if $A \in T_p(\lambda_1)$ and $B \in T_p(\lambda_2)$ then either
$A \cap B = \emptyset$ or $ B \subseteq A$.  As a result, $T_p$ can be represented as a
dendrogram with height indexed by $\lambda \geq 0$. We refer to
\cite{inference.tree} for a formal definition of the dendrogram encoding
a cluster tree.

Let $\{X_i\}_{i=1}^n$ be   i.i.d. samples from $P$.  In order to estimate the cluster tree of
$p$ we will consider tree-valued estimators, defined below.

\begin{definition}
	\label{def.cluster.tree.estimator}
	A cluster tree estimator of $T_p$ is a
	collection  $\widehat{T}_n$ of subsets of $\{X_i\}_{i=1}^n$ indexed by $[0,
	\infty)$ such  that 
\\
$\bullet$		for each $\lambda \geq 0$, $\widehat{T}_n(\lambda)$  consists of disjoint subsets of $\{X_i\}_{i=1}^n$ (including, possibly the empty set), called clusters, and 
		\\
$\bullet$	$\widehat{T}_n$ satisfies the  tree property:
		for any $\lambda_1 \geq \lambda_2 \geq 0$, if $A \in
		\widehat{T}_n(\lambda_1)$ and $B \in \widehat{T}_n(\lambda_2)$ then either $A \cap B
		= \emptyset$ or $A
		\subseteq B$.

\end{definition}


It is important to realize that, while the cluster tree $T_p$  is a collection of connected subsets of the support of $p$, the cluster tree estimators considered in this paper are collections of subsets of the sample points.

In order to quantify how well a cluster-tree estimator approximates the true cluster tree, we will make use of  the notion of cluster tree consistency put forward by  \cite{chaudhuri2014consistent}.

	In detail, let $ \mathcal{A}_n $ denote a  collection of
	connected subsets of the support of $p$, which may depend on $n$. A cluster tree estimator $\widehat{T}_n$ is consistent with
	respect to  $\mathcal{A}_n$ if,  with probability tending to $1$ as $n \to \infty$, the following holds simultaneously over all $A$ and $A'$ in
	$\mathcal{A}_n$: 
	the smallest clusters in $\widehat{T}_n$ containing $A \cap \{X_i\}_{i=1}^n$ and $A' \cap \{X_i\}_{i=1}^n$ are disjoint.
The requirement for consistency outlined  above is rather
natural: if a cluster tree is  deemed consistent with respect to the
sequence $\mathcal{A}_n$, then it should, with probability tending to $1$, cluster the sample points perfectly well, 
as if we had  the ability of verifying, for each pair of sample points $X_i$ and $X_j$  and each connected set $A \in \mathcal{A}_n$, whether both $X_i$ and $X_j$ are in $A$.

We  allow $ \mathcal{A}_n$ to grow larger and more complex with $n$, so that the cluster tree estimator
will be able to discriminate among clusters of $p$
that are barely distinguishable given the size of the sample.  An example of a sequence $\{  \mathcal{A}_n \}_{n=1}^\infty$ is the set of $\delta_n$-separated clusters according to \Cref{defi:path-delta-separated}, where the parameter 
$\delta_n$ is taken to be vanishing as $n\to \infty$.
The sequence of target subsets $\{ \mathcal{A}_n \}_{n=1}^\infty $ may not be chosen to be too
large:  for example if $\mathcal{A}_n$ equals to the set of all clusters
of $p$, then, depending on the complexity of $p$, no cluster tree estimator
need to be consistent. 
A natural way to define $\{ \mathcal{A}_n \}_{n=1}^\infty$ is by specifying a {\it separation
	criterion} for sets, which may become less strict as $n$ grows, and then
populate each $\mathcal{A}_n$ using only the connected subsets of the
support of $p$ fulfilling such
a
criterion. 
In particular, \cite{chaudhuri2014consistent} develop a   criterion known as the $(\epsilon,\sigma)$-separation, which
requires two connected subsets  $A$ and $A'$  to be far apart from each other in terms
of their ``horizontal" distance $d(A,B)$
and their ``vertical" distance, in the sense that the smallest cluster containing
both $A$ and $B$ should belong to a level set of $p$ indexed by a value of
$\lambda$ significantly smaller to  the values indexing the level sets of $A$ and
$B$. See \Cref{defi:Dasgupta}  below
for details. One of the main contributions of this paper is to replace this rather general notion of separation by a simpler one, the $\delta$-separation criterion in \Cref{defi:path-delta-separated}, which is better suited deal with smooth densities. This allows us to derive new rates of consistency that depend explicitly on the smoothness of the density. 

As explained in \cite{eldridge2015beyond}, the cluster tree consistency guarantees based on separation criteria can be  fairy coarse, as they only require $\widehat{T}_n$ to preserve the connectivity of all the sets in $\mathcal{A}_n$. In particular, a tree estimator that is consistent with respect to such definition needs not yield a good clustering of  the sample points. Concretely, $\widehat{T}_n$ might have additional unwanted clusters, referred to as {\it false} in \cite{chaudhuri2014consistent}, that do not correspond to any disjoints sets in $\mathcal{A}_n$, a phenomenon referred to as {\it over-segmentation} by \cite{eldridge2015beyond}. Similarly, $\widehat{T}_n$ might not conform to the partial order of inclusions among the clusters of $p$, an issue called {\it improper nesting}. In fact, the  estimators developed in this paper do not suffer from such shortcomings and are consistent in the merge distortion metric of \cite{eldridge2015beyond}, a more refined stronger notion of consistency for cluster trees.  See \Cref{section:consistent split level,section:discussion} below.

\section{The DBSCAN Algorithm}
\label{section:background}
The DBSCAN algorithm, first introduced in \cite{ester1996density}, is an extremely popular methodology for ``flat'' clustering.
In this section we introduce a simple generalization of DBSCAN, shown below in \Cref{algorithm:DBSCAN}, that yields cluster tree estimators and establish its connections with kernel density estimation. 

\begin{algorithm}[!ht]
	\begin{algorithmic}
		\INPUT i.i.d sample $\{X_i\}_{i=1}^n$,  and $h>0$.
		\State 1. For each $k \in \mathbb{N}$, construct a graph  $\mathbb G_{h,k}$ with nodes $\{X_i: |B(X_i,h)\cap \{X_j\}_{j=1}^n |  \ge k \}$ and edges $(X_i, X_j)$  if  $\| X_i-X_j\| <2 h$. 
		\State 2.  Compute $\mathbb C(h,k)$, the graphical connected components of $\mathbb G_{h,k}$.
		\OUTPUT $  \{ \mathbb C (h,k), k \in \mathbb{N} \}.$
		\caption{The DBSCAN algorithm.}
		\label{algorithm:DBSCAN}
	\end{algorithmic}
\end{algorithm}

For a fixed value of $k$,   \Cref{algorithm:DBSCAN} is in fact a simplified version of the original DBSCAN procedure of  \cite{ester1996density}, where the parameters $h$ and $k$ are called instead $\mathrm{Eps}$ and $\mathrm{MinPts}$, respectively. Notice that, unlike in the original formulation of DBSCAN, we do not distinguish between core and border points and, furthermore, we evaluate connectivity among the sample points using balls of radius $2h$ instead of $h$. Such modifications have no impact on the rates of consistency we obtain but simplify the derivations. 

Assuming $h>0$ fixed, by sweeping through all the possible values of $k$, 
\Cref{algorithm:DBSCAN}  produces a sequence of nested geometric graphs    
$  \widehat T_n= \{  \mathbb C (h,k)\}_{k\in \mathbb{N}}$.
It is immediate to see that  $  \widehat T_n$ forms a cluster tree estimator over the sample points $\{ X_i\}_{i=1}^n$; see \Cref{def.cluster.tree.estimator}.
This is because, for each $k_1 \leq k_2$,
\[
\bigcup_{\{X_i: \ |B(X_i,h)\cap \{X_j\}_{j=1}^n |  \ge k_2 \} }B(X_i,h)\subseteq \bigcup_{ \{X_i: \ |B(X_i,h)\cap \{X_j\}_{j=1}^n |  \ge k_1 \} }B(X_i,h).
\]

	In practice, \Cref{algorithm:DBSCAN} can be efficiently implemented using a union-find structure in such a way that the set $\mathbb{C}(h,k)$ of the maximal connected components of $\mathbb G_{h,k}$ can be computed without using the potentially expensive breadth-first search or depth-first search algorithms. 
   The resulting cluster tree algorithm is simpler  than the estimator based on Wishart's algorithm proposed in
\cite{chaudhuri2014consistent}. Indeed, the DBSCAN-based  estimator is obtained
from a sequence of node-induced sub-graphs of the $2h$-neighborhood graph over the
sample points. In contrast, Wishart's algorithm entails taking node and
edge-induced sub-graphs of the $k$-nearest neighborhood graph over $\{X_1,\ldots,X_n\}$, which
has higher computational complexity.

As explained in \cite{sriperumbudur2012consistency}, DBSCAN is implicitly using
a kernel density estimator with kernel corresponding to the indicator function
of the unit $d$-dimensional Euclidean ball to cluster the points. In detail,
consider the density estimator 
$\widehat{p}_h$ given by 
\begin{equation}\label{eq:ph.dbscan}
x \in \mathbb{R}^d \mapsto  \widehat p_h(x)=\frac{ |B(x,h)\cap \{X_i\}_{i=1}^n |}{nh^dV_d} = \frac{1}{n h^d
	V_d} \sum_{i=1}^n K\left( \frac{x - X_i}{h} \right),
\end{equation}
where 
\begin{equation}\label{eq:KDE}
K(x) = \left\{
\begin{array}{ll}
1 & \text{if } x \in B_d(0,1),\\
0 & \text{otherwise.}    	
\end{array} \right.
\end{equation}
It is easy to see that  $\widehat{p}_h$ is a Lebesgue density, i.e. $\widehat{p}_h(x)$ is a measurable, non-negative function  and $\int_{\mathbb{R}^d}
\widehat{p}_h(x) dx = 1$. Furthermore, 
$
\mathbb{E}[\widehat{p}_h(x)] = p_h(x)$  for all $x \in \mathbb{R}^d,
$
where
\begin{equation}\label{eq:ph}
p_h(x) = \frac{1}{h^d V_d} \int_{\mathbb{R}^d} K\left( \frac{x - z}{h}
\right) p(z) dz  = \frac{P(B(x,h))}{h^d V_d}.
\end{equation} 
For any  $\lambda \geq 0$, set $\widehat D(\lambda)= \{x \colon \widehat p_h(x)\ge
\lambda\}\cap \{X_i\}_{i=1}^n$ and 
\begin{equation}
\label{eq:hat-L}
\widehat L (\lambda) = \bigcup_{X_j\in \widehat D(\lambda) } B(X_j,h).
\end{equation}
Then, setting, for $ k >0$, $\lambda_k=\frac{k}{nh^dV_d}$, one can see that, for clustering purpose, $\mathbb C(h,k)$ and $\widehat L(\lambda_k)$  
convey the same information. 
Indeed from the definition of  $\widehat L(\lambda_k)$, it is straightforward  to see that 
\begin{lemma}
\label{lem:obvious}
 Two data points $X_i$ and $X_j$
are in the same connected component of the $d$-dimensional set $\widehat L(\lambda_k)$ if and only if they are in the same connected component of the graph $\mathbb C(h,k)$.
\end{lemma}

The union of balls $\widehat{L}(\lambda)$ is a renown estimator in the literature on level set
estimation, originally studied in
\cite{DW} (see also \cite{cuevas2004boundary}). In particular,
with a suitable
choice of the bandwidth parameter $h$ and as $n$ grows unbounded, $\widehat L (\lambda)
$
is a rate-optimal  estimator  of the level set $L(\lambda)$ under various loss functions and appropriate assumptions on  the
underlying density.

\section{Clustering  Consistency for H\ou lder Continuous Densities}
\label{section:holder}


In this section we show that the DBSCAN algorithm \ref{algorithm:DBSCAN} is consistent under
H\"{o}lder smooth densities. Towards that end,
we introduce a new notion of cluster tree consistency, called $\delta$-consistency (see \Cref{section:delta consistency} below), which is well-suited to study cluster trees generated by smooth densities. 
We will show that DBSCAN, with suitable inputs, will return  cluster tree estimators that nearly attain the corresponding  minimax optimal rates and that those rates depend  on the degree of
smoothness of the density.

\subsection{H\ou lder smooth densities}
\label{sec:holder.recap}

Below we give a recap of well-known results on non-parametric density estimation.
Given 
vectors $s=(s_1,\ldots, s_d)$ in $\mathbb{N}^d$ and $x
=(x_1,\ldots,x_d)$ in $\mathbb{R}^d$, set
$|s| =s_1+\dots +s_d$ and $x^s= x_1^{s_1}\dots x_d^{s_d}$,
and let
$$ D^s =\frac{\partial ^{s_1+\dots +s_d}}{\partial x_1^{s_1}\dots \partial x_d^{s_d}}$$
denote the differential operator. 
A  function 
$p :\mathbb R^d\to \mathbb R$ is said to belong the H\ou lder class  $\Sigma
(L,\alpha)$ with parameters
$\alpha> 0$ and $L>0$  if $p$ is $\lfloor \alpha \rfloor$-times continuously
differentiable and, for all $x,y \in\mathbb R^d$ and all $s \in
\mathbb{N}^d$ with $|s| = \lfloor \alpha \rfloor$,
$$|D^sp(x)-D^sp(y)| \le L \|x-y \|^{\alpha-s}.$$
Notice that, when $ 0 < \alpha \leq 1$, the H\"{o}lder condition reduces to the
Lipschitz condition
\[
| p(x) - p(y) | \leq L \| x - y\|^\alpha, \quad \forall x, y \in
\mathbb{R}^d.
\]

Let $\widehat p_h$ denote a kernel density estimator with bandwidth $h$ and kernel $K$, that is 
$$\widehat p_h (x) =\frac{1}{n h^d} \sum_{i=1}^n K\left( \frac{x- X_i}{h}  \right). $$
Then, we obtain the standard bias-variance decomposition for the KDEs, namely
\[
\| \widehat{p}_h - p \|_\infty \leq \|\widehat p_h-p_h\|_{\infty}  + \|p_h-p \| _{\infty}.
\]
In order to control the  stochastic component $ \|\widehat p_h-p_h\|_{\infty}  $  we will invoke well-known concentration bounds from density estimation to conclude that, under appropriate and very mild assumptions on $K$, there exists a constant $C_1 > 0$, depending on $\|p\|_\infty$, $d$ and $K$, such that, for any $\gamma > 0$ and all $n$ large enough and assuming $n h^d \geq 1$, 
\begin{equation}\label{eq:variance of density}
\mathbb{P}\left(\|\widehat p_h-p_h\|_{\infty}\le a_n \right) \ge 1-e^{-\gamma},
\end{equation}
where
\begin{equation}\label{eq:an}
a_n = C_1 \sqrt{\frac{(\gamma+\log (1/ h))}{nh^d} }.
\end{equation}
The verification of this bound can be found in many places in the literature; see, e.g.,  \cite{gine2002rates}, \cite{sriperumbudur2012consistency}, \cite{jiang2017uniform} and \cite{jisu.kde}. See  \Cref{appendixa} for details.  
As for the bias term $\| p_h - p \|_\infty$, if $K$ is chosen to be a 
 $ \alpha  $-valid kernel\footnote{ 
For a fixed $\alpha>0$, a function $K :\mathbb R^d\to  \mathbb R$ is an $\alpha$-valid kernel if $\int_{\mathbb R^d} K(x) dx =1$, has finite $L_p$ norm for all $p \geq 1$, $\int_{\mathbb R^d} \| x\|^\alpha K(x) dx < \infty$ and $\int_{\mathbb R^d}  x^sK(x) dx=0$ for all $s = (s_1,\ldots,s_d) \in \mathbb{Z}^d$ such that $1 \leq \sum_{i=1}^d s_i \le \lfloor \alpha \rfloor $, where for $x = (x_1,\ldots,x_d) \in \mathbb{R}^d$, $x^s = \prod_{i=1}^d x_i^{s_i}$. See Definition 1 in \cite{rigollet2009optimal}.
 } (see, e.g., \cite{rigollet2009optimal}), then
standard calculations yield that
\begin{equation}\label{eq:ph.bias}
\| p_h - p \|_\infty \leq C_2 h^\alpha,
\end{equation}
for an appropriate constant $C_2>0$ depending on $L$ and $\alpha$. In particular, since this type of kernels are polynomials 
supported on $[0,1]^d $, they automatically satisfy the VC condition \citep[see lemma 22 of][for a justification]{nolan1987u}.

Thus combining the bias in  \eqref{eq:variance of density} and the variance in  \eqref{eq:ph.bias}, we  conclude that,  with probability at least $1-e^{-\gamma}$, with $\gamma$  any positive number,
\begin{equation}
\label{eq:KDE risk}
\|\widehat p_h- p \|_{\infty}\le C_1 \sqrt{\frac{\gamma+\log (1/ h)}{nh^d}} +C_2 h^{\alpha}.
\end{equation}
Setting, e.g., $\gamma = \log n$, the optimal choice of the bandwidth is 
\begin{equation}\label{eq:h.optimal}
h \asymp \left( \frac{\log n}{n} \right)^{2\alpha + d},
\end{equation}
leading to the final rate of
$
  \left( \frac{\log n}{n} \right)^{\frac{\alpha}{2\alpha + d}} 
$, which is in fact minimax optimal.

\subsection { The $\delta$-Separation Criterion}
\label{section:delta consistency}
We now formulate a notion of cluster separation that is
naturally suited to smooth densities and, for continuous densities  is equivalent to separation in the merge distance of \cite{eldridge2015beyond} \cite[see also][]{inference.tree}.

\begin{definition} \label{defi:path-delta-separated}

Let  $A$ and $A'$ be subsets  the support of the density $p$ and set $\lambda :=\inf_{x\in A\cup A'}p(x)$.  For $\delta \in (0,\lambda)$,  $A$ and $A'$ are said to be $\delta$-separated  if
	they 
	belong to distinct connected components of the level set $\{x \colon p(x) >\lambda -\delta\} $.
\end{definition}

Unlike the separation criterion of \cite{chaudhuri2014consistent}, which requires the specification of two parameters quantifying the horizontal and vertical displacement between clusters, $\delta$-separation only uses one parameter
The intuition behind the notion of $\delta$-separation is simple: due to the smoothness of the
density,  the degrees of ``vertical" and
``horizontal" separation between clusters are coupled. This is illustrated in 
\Cref{fig:projection_pairs_plot} and best explained for the case of a density in
$\Sigma(L,\alpha)$ with $\alpha \leq
1$. If $A$ and $A'$ are $\delta$-separated, then their distance is at
least $\left( \frac{\delta}{L} \right)^{1/\alpha}$. 
As a result, the degree of separation between clusters of smooth densities  can
be described using only
one parameter, a  feature that we will exploit to derive new notion of consistency for
clustering.

\begin{definition}\label{defi:delta-consistent}
Let $\delta>0$ and $\gamma \in (0,1)$.  A cluster tree estimator based on an
i.i.d. sample $\{X_i\}_{i=1}^n$ is
$(\delta,\gamma)$-accurate if, with probability no smaller than $1-\gamma$, for any 
  pair of connected subsets $A$ and $A'$ of the support that are $\delta$-separated, exactly one of the following conditions holds:
\begin{itemize}
\item[1.] at least one of  $A\cap \{X_i\}_{i=1}^n$ and $A'\cap \{X_i\}_{i=1}^n$ is empty;
\item[2.] the smallest clusters in the cluster tree estimator containing $A\cap \{X_i\}_{i=1}^n$ and $A'\cap \{X_i\}_{i=1}^n$ are disjoint.
\end{itemize}
Let  $\{ \delta_n \}_n$ be  a vanishing sequence of positive numbers and a $\{
\gamma_n \}$ a vanishing sequence
in $(0,1)$. We say that the  sequence of cluster tree
estimators $\{ T_n \}_n$, where $T_n$ is based on an i.i.d.
sample $\{X_i\}_{i=1}^n$, is 
$\delta$-consistent with rate $\delta_n$ if, for all $n$ large enough,
$T_n$ is $(\delta_n,\gamma_n)$-accurate where $\gamma_n$ decays polynomially in $n$.
\end{definition}

It is important to realize that the notion of
$\delta$-consistency is a {\it uniform} notion of consistency
that is required to hold simultaneously over all possibly pairs of
$\delta$-separated connected subsets of the support.

The $\delta$-separation criterion is closely related to the concept of the {\it merge height} introduced by \cite{eldridge2015beyond}. In the context of hierarchical clustering, the merge height is used to describe the “height” at which two points or two clusters merge into one cluster;   see \Cref{defi:merge height}. 
 In particular we show below in \Cref{lemma:merge height} that if two subsets $A$ and $A'$ of the support are $\delta$-separated and  
	$\inf_{x\in A\cup A'}p(x) =\lambda $, then their merge height  is no larger than $\lambda -\delta$. To further emphasize how similar the two approaches are, we mention that our results about cluster consistency still hold for a slightly stronger
notion of cluster consistency, whereby condition 2 in
\Cref{defi:delta-consistent} is replaced by the  condition 
\begin{enumerate}
\item[2.] there exists a level $\lambda \in [\inf_{x\in A\cup A'} f(x)-\delta,
	\inf_{x\in A\cup A'} f(x))$ such that  $A\cap \{X_i\}_{i=1}^n$ and $A'\cap \{X_i\}_{i=1}^n$  are
	contained in  two different $\lambda$-clusters of the cluster tree
	estimator.
\end{enumerate}
The key difference between $\delta$-consistency and this stronger version of $\delta$-consistency is
that in the latter case we further constrain the split level for $A$ and $A'$ in the cluster tree
estimator to occur at a value less than $\inf_{x\in A\cup A'} f(x)$ by an amount no larger than $\delta_n$. This is precisely what is required for merge distance consistency; see \cite{eldridge2015beyond}.
	We provide more detailed comparison in \Cref{section:discussion}, where we further elucidate the differences between our notion of $\delta$-separation and the $(\epsilon ,\sigma)$-separation criterion of \cite{chaudhuri2014consistent}.


\begin{figure}[H]
	\includegraphics[width=\columnwidth]{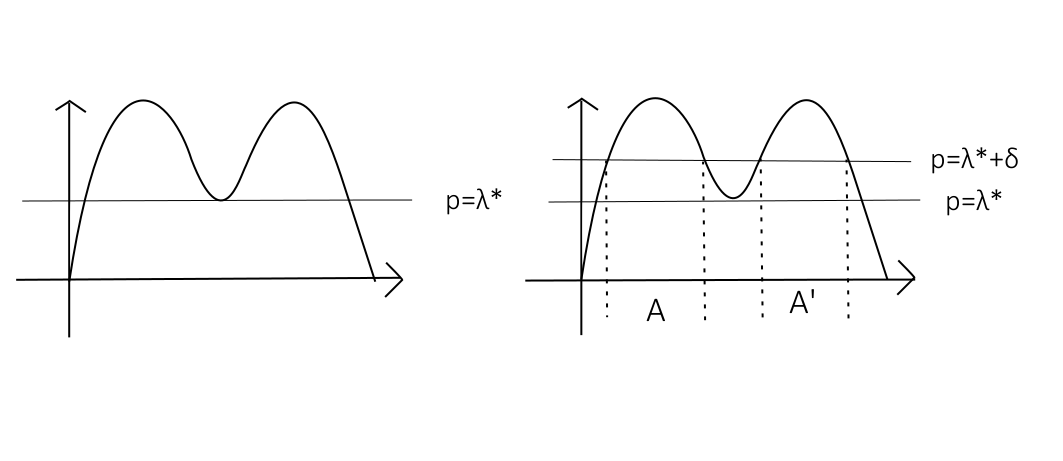}
	\caption{The left figure depicts a split level (defined in $\lambda^*$ \Cref{subsection:split level})   of the density $p$. The right figure depicts two sets $A$ and $A'$ being $\delta$ separated with respect to  $\lambda^*$.}
	\label{fig:projection_pairs_plot}
\end{figure}

\subsection{The split levels}

\label{subsection:split level}
One of the most impoertant features of a cluster tree is the collections of
levels $\lambda$ at which the clusters split into two or more disjoint sub-clusters, which we  refer to as {\it split levels.}
Such levels belong to the well-known class of ``critical levels''  in  differential topology, which identify critical changes in the topology of the upper level sets of $p$. See, for example, \cite{hirsch2012differential} for more details.  
In particular, the estimation of split levels is a central theme in the contributions of  \cite{sriperumbudur2012consistency} and
in \cite{steinwart:15}.
Below, we provide a slightly different characterization  of the split levels of continuous densities and relate it to the
criterion of $\delta$-separation of clusters. The notion of split levels will be important below in \Cref{sec:MDBSCAN upper} in formalizing  conditions under which computationally efficient and statistically optimal cluster tree estimation is feasible for H\"{o}lder densities with smoothness degree $\alpha>1$. It will also be used to demonstrate that we can easily remove  false clusters from the cluster tree estimations returned by our algorithms (see \Cref{section:consistent split level}).

\begin{definition}\label{defi:split-level}
	Let $p:\mathbb R^d \to \mathbb R$ be  a continuous density function. For a fixed $\lambda^* >0$, let $\{\mathcal C_k\}_{k=1}^K$ be the connected components of $\{ p\ge \lambda^*\}$. 
	The value $\lambda^*$ is said to be a split level of $p$ if there exists a $\mathcal C_k$ such that $\mathcal C_k \cap \{p > \lambda^*\}$ has two or more connected components.
\end{definition}

The following simple result illustrates the main topological properties of split levels.
\begin{proposition} \label{lemma:well-defined-2}
	Suppose that $p:\mathbb R^d \to \mathbb R$ is compactly supported and that  $A$ and 
	$A'$ are subsets of  two distinct connected components of $\{p\ge\lambda_1\}$. If $A$ and $A'$ belongs to the same connected components of $ \{ p\ge\lambda_2\}$, 
	where $\lambda_2< \lambda_1$,  then  there is a unique split level $\lambda^{*}
	\in [\lambda_2, \lambda_1)$
	such that $A$ and $A'$ belong to one
	connected component of $\{ p\ge \lambda^*\}$ and to distinct connected
	components of $\{p>\lambda^*\}$.
\end{proposition}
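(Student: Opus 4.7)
The plan is to define $\lambda^*$ as the supremum of those levels at which $A$ and $A'$ have not yet merged, and then verify in turn: existence (i.e., $\lambda^* < \lambda_1$ and $\lambda^* \in [\lambda_2, \lambda_1)$ with $A, A'$ in the same component of $\{p \ge \lambda^*\}$), distinctness in the strict super-level set, and uniqueness.

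Set
\[
S = \{\lambda \in [\lambda_2, \lambda_1] : A\text{ and }A'\text{ lie in the same connected component of } \{p \ge \lambda\}\}.
\]
By hypothesis $\lambda_2 \in S$ and $\lambda_1 \notin S$. Monotonicity of the super-level sets makes $S$ downward closed in $[\lambda_2,\lambda_1]$ (the component of $\{p \ge \lambda\}$ containing $A \cup A'$ is a connected subset of $\{p \ge \mu\}$ for any $\mu \le \lambda$), so $\lambda^* := \sup S$ is well-defined. The critical technical step is a nested intersection argument: for any increasing sequence $\lambda_n \uparrow \mu$ in $S$, let $C_n$ denote the connected component of $\{p \ge \lambda_n\}$ containing $A \cup A'$. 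Continuity of $p$ together with compact support makes each $C_n$ a compact connected set, and $C_{n+1} \subseteq C_n$. By the standard result that a decreasing nested intersection of compact connected sets in a Hausdorff space is itself compact and connected, $C := \bigcap_n C_n$ is a continuum contained in $\bigcap_n \{p \ge \lambda_n\} = \{p \ge \mu\}$ and containing $A \cup A'$. Thus $\mu \in S$. Taking $\mu = \lambda_1$ contradicts $\lambda_1 \notin S$, so no sequence in $S$ can accumulate at $\lambda_1$; combined with downward closure this forces $\lambda^* < \lambda_1$. Taking $\mu = \lambda^*$ gives $\lambda^* \in S$.

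Next I would show that $A$ and $A'$ lie in distinct components of $\{p > \lambda^*\}$. Suppose not, and let $U$ be a common component. Since $U$ is open and connected in $\mathbb R^d$ it is path-connected; let $\gamma : [0,1] \to U$ be a continuous path from a point of $A$ to a point of $A'$, and set $K = \gamma([0,1])$. Compactness of $K$ gives $\lambda' := \min_K p > \lambda^*$. Choose any $\eta \in (\lambda^*, \min(\lambda_1, \lambda'))$, an interval that is nonempty because $\lambda^* < \lambda_1$. Then $A \cup K \cup A'$ is connected (the three pieces share endpoints) and contained in $\{p \ge \eta\}$, yielding $\eta \in S$ and contradicting $\eta > \sup S$.

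Uniqueness is immediate: if $\mu \in [\lambda_2, \lambda_1)$ also satisfies both characterizing properties then $\mu \in S$, so $\mu \le \lambda^*$; and if $\mu < \lambda^*$, the component of $\{p \ge \lambda^*\}$ containing $A \cup A'$ is a connected set in $\{p > \mu\}$ joining $A$ to $A'$, contradicting the distinctness at $\mu$. That $\lambda^*$ is a split level in the sense of \Cref{defi:split-level} then follows at once, since the common component of $A \cup A'$ in $\{p \ge \lambda^*\}$ contains $A$ and $A'$ which lie in different components of $\{p > \lambda^*\}$. The principal obstacle is the nested intersection of continua used to pass the merging property through the limit $\lambda^*$; this is exactly where continuity of $p$ and compactness of its support enter in an essential way.
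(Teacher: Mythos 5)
Your proof is correct and follows essentially the same route as the paper: define $\lambda^*$ as the supremum of levels at which $A$ and $A'$ remain merged, use the nested-intersection-of-compact-connecta argument (stated as a separate lemma in the paper) to show the supremum is attained in the set, and then use path-connectedness of the open set $\{p>\lambda^*\}$ plus compactness of a connecting path to derive a contradiction to the supremum property. You additionally spell out the uniqueness step, which the paper leaves implicit.
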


 \Cref{lemma:well-defined-2} suggests that if two connected components
merge into one as the density level $\lambda$ decreases, then there exists  one
and only one split level at which the corresponding merge takes place. 
Therefore,  the following definition, which characterizes the  split level of any two distinct clusters in a cluster tree, seems natural.

\begin{definition}\label{definition:split at a level}
	Suppose $A$ and $A'$ are two open subsets of the support of $p$. Then $A$ and $A'$ are said to split at level $\lambda^*$ if $A$ and $A'$ belong to one connected component of  $\{ p\ge
	\lambda^*\}$ and to two distinct connected components of $\{p>\lambda^*\}$.
\end{definition}

In our next result we illustrate a direct link between the notion of split levels and the criterion of $\delta$-separation introduced above. We will exploit this fact  later  in \Cref{section:consistent split level} to demonstrate how to prune the cluster tree estimators to  yield accurate estimates of the split levels without producing false clusters.

\begin{corollary}
	\label{coro:delta split}
	Let  $A$ and $A'$ be $\delta$-separated. Then there exists a split level 
	$\lambda^* $ of the density, with 
	\[
	\lambda^*\le \inf_{x\in A\cup A'} f(x) -\delta,
	\]
	such that $A$ and $A'$ belong to one connected component of  $\{ p\ge
	\lambda^*\}$ and to two distinct connected components of $\{p>\lambda^*\}$.
\end{corollary}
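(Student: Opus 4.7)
\medskip

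\noindent\textbf{Proof plan for \Cref{coro:delta split}.}

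The plan is to reduce the corollary directly to \Cref{lemma:well-defined-2} and then to extract the extra quantitative bound $\lambda^*\le \inf_{x\in A\cup A'} p(x)-\delta$ by a short contradiction argument exploiting the $\delta$-separation hypothesis.

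First, introduce the shorthand $\lambda:=\inf_{x\in A\cup A'} p(x)$. By \Cref{defi:path-delta-separated}, $A$ and $A'$ are contained in distinct connected components of the open super-level set $\{p>\lambda-\delta\}$. Since $\lambda>\lambda-\delta$, the inclusion $\{p\ge\lambda\}\subset\{p>\lambda-\delta\}$ holds, and any connected subset of $\{p\ge\lambda\}$ is connected as a subset of $\{p>\lambda-\delta\}$; hence $A$ and $A'$ also belong to distinct connected components of $\{p\ge\lambda\}$. This sets up the hypothesis ``$A$ and $A'$ are subsets of two distinct connected components of $\{p\ge\lambda_1\}$'' in \Cref{lemma:well-defined-2} with $\lambda_1=\lambda$.

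Next, produce a $\lambda_2<\lambda$ at which $A$ and $A'$ lie in a common connected component of $\{p\ge\lambda_2\}$, so that \Cref{lemma:well-defined-2} applies. Under the standing assumption of compactly supported $p$ (so that the entire support $\Omega$ is itself $\{p\ge 0\}$) this is immediate: either $A$ and $A'$ lie in the same connected component of the support, in which case one may take $\lambda_2=0$, or they do not, in which case no split level exists and the corollary is vacuous. With the two hypotheses in place, \Cref{lemma:well-defined-2} delivers a unique split level $\lambda^*\in[\lambda_2,\lambda)$ such that $A$ and $A'$ belong to a single component of $\{p\ge\lambda^*\}$ but to distinct components of $\{p>\lambda^*\}$, which is already the qualitative conclusion of the corollary.

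The only remaining step, and the one that requires a little care, is the quantitative bound $\lambda^*\le\lambda-\delta$. Suppose for contradiction that $\lambda^*>\lambda-\delta$. Then $\{p\ge\lambda^*\}\subset\{p>\lambda-\delta\}$, so the single connected component of $\{p\ge\lambda^*\}$ containing both $A$ and $A'$ is connected as a subset of $\{p>\lambda-\delta\}$; consequently $A$ and $A'$ lie in the same connected component of $\{p>\lambda-\delta\}$, contradicting the $\delta$-separation assumption. Hence $\lambda^*\le\lambda-\delta=\inf_{x\in A\cup A'}p(x)-\delta$, and the proof is complete. The main (minor) obstacle is the first-paragraph sleight of hand that transfers the separation between $A$ and $A'$ from an open super-level set at $\lambda-\delta$ to a closed super-level set at $\lambda$, which is needed to apply \Cref{lemma:well-defined-2} cleanly; everything else is inclusion of level sets and the uniqueness already supplied by \Cref{lemma:well-defined-2}.
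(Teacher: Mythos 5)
Your proof is correct and rests on the same key ingredient as the paper's: Proposition~\ref{lemma:well-defined-2}. The difference is in how the two arguments massage the $\delta$-separation hypothesis into the hypotheses of that proposition and then extract the bound $\lambda^*\le \lambda-\delta$. The paper applies \Cref{lemma:well-defined-2} at the level $\lambda_1 = \lambda-\delta+\epsilon$ for an arbitrary $\epsilon\in(0,\delta]$ (and $\lambda_2=0$), obtaining a split level $\lambda^*<\lambda-\delta+\epsilon$, and then lets $\epsilon\to 0$, implicitly using the fact that the split level produced by \Cref{lemma:well-defined-2} is an intrinsic quantity (a supremum) not depending on the auxiliary $\epsilon$. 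You instead apply the proposition at $\lambda_1=\lambda$ directly, which first requires the small transfer step you highlight (separation in $\{p>\lambda-\delta\}$ implies separation in the smaller set $\{p\ge\lambda\}$), and then you obtain the quantitative bound by a clean contradiction: if $\lambda^*>\lambda-\delta$ then the common component of $\{p\ge\lambda^*\}$ would sit inside $\{p>\lambda-\delta\}$, contradicting $\delta$-separation. Your route avoids the limiting argument and is arguably tidier. One small inaccuracy worth fixing: you write that compact support makes ``the entire support $\Omega$ itself $\{p\ge 0\}$.'' That is not right -- $\{p\ge 0\}=\mathbb{R}^d$ regardless of the support, since a density is nonnegative everywhere. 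The paper uses precisely this fact ($\{p\ge 0\}=\mathbb{R}^d$ is connected) to guarantee that $A$ and $A'$ lie in a common component at $\lambda_2=0$, so your ``or the corollary is vacuous'' branch never actually arises; the hypothesis of \Cref{lemma:well-defined-2} at $\lambda_2=0$ is always met.
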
 

\subsection{Rate of consistency for the DBSCAN algorithm} 
\label{section:upper bounds}

We are now ready to present the main results of the paper, and derive rates of consistency for DBSCAN-based cluster tree estimators with respect to the criterion of $\delta$-separation and for H\"{o}lder smooth densities. Specifically,   we will show that these estimators are $\delta$-consistent with rate
\begin{equation}\label{delta.rate}
\delta_n \geq C \left( \frac{\log (n)}{n} \right)^{ \frac{\alpha}{2\alpha + d}},
\end{equation}
for an appropriate constant $C$ that depends on $\| p\|_\infty$, $L$, $K$ and $\alpha$.
The above rates  depend
on the smoothness of the underlying density, with smoother densities leading to faster rates, and, as shown in \Cref{subsection:flat-lowerbound}, are in fact minimax optimal. This is one of the main contributions of this article and delivers an extension of the cluster consistency results of \cite{chaudhuri2014consistent}, which are agnostic to the smoothness of $p$.

\subsubsection{Consistency for $\alpha \leq 1$}

We first show that, when $\alpha \leq 1$, the DBSCAN algorithm is $\delta$-consistent with rate of order \eqref{delta.rate}. We remark that this type of result can be deduced from several  contributions in the literature on density-based clustering, which show that variants of
the DBSCAN algorithm  lead to some  form of cluster consistency when $\alpha \le 1$. See, e.g., \cite{rinaldo2010generalized}, \cite{sriperumbudur2012consistency}, \cite{jiang2017density} and  \cite{ingo.bharath.new}.  We provide the details for completeness.

In order to demonstrate that DBSCAN is
$\delta$-consistent, it will be sufficient to show that the procedure provides
an approximation to the upper level sets of $\widehat{p}_h$. This is done in the next result, which relies on general, well-known, finite sample concentration bounds for KDEs along with standard calculations for the bias of a KDE; see \Cref{lemma:standard-level-inequality} and \Cref{prop:Steinward} in \Cref{appendixa}. 

\begin{lemma}\label{lemma:DBSCAN-level-inequality}
	Assume that $p \in \Sigma(L,\alpha)$, where $\alpha \in (0,1]$, and let $K$ be
	the spherical kernel. Then, there exist constants  $C_2$, depending on $C_1$,
	$\|p\|_{\infty}$,  $L$ and $d$   such that, 	if  $h =C_1 n^{-\frac{1}{2\alpha +d }}$
	then uniformly over all $\lambda>0$,  with probability at least $1-1/n$,

	\begin{align}\label{eq:dbscan level set}
	\left\{ p\ge \lambda +C_2 \left( \frac{\log(n)}{n} \right)^{\alpha/(2\alpha+d)}+Lh^\alpha \right
	\}  \subset \bigcup_{X_j\in \widehat D(\lambda)}  B(X_j,h)\subset \left\{ p\ge
	\lambda -C_2 \left( \frac{\log(n)}{n} \right)^{\alpha/(2\alpha+d)}-Lh^\alpha   \right \}.
	\end{align}

\end{lemma}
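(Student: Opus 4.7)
The plan is to combine the uniform sup-norm control of $\widehat p_h - p$ supplied by \eqref{eq:KDE risk} with the H\"older continuity of $p$. Applying \eqref{eq:KDE risk} with the spherical kernel, $\gamma = \log n$, and the stated bandwidth $h = C_1 n^{-1/(2\alpha+d)}$ produces an event $\mathcal{E}_n$ of probability at least $1 - 1/n$ on which $\|\widehat p_h - p\|_\infty \le a_n$, where $a_n$ is a sum of a stochastic contribution of order $\log n / n^{\alpha/(2\alpha+d)}$ and a bias contribution of order $Lh^\alpha$, with implicit constants depending only on $\|p\|_\infty$, $C_1$, $L$, and $d$. All arguments below are carried out on $\mathcal{E}_n$, and the statement is uniform in $\lambda$ because the event itself does not depend on $\lambda$.

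For the right inclusion, any $y \in \widehat L(\lambda)$ lies in $B(X_j, h)$ for some sample $X_j$ with $\widehat p_h(X_j) \ge \lambda$. Then $p(X_j) \ge \widehat p_h(X_j) - a_n \ge \lambda - a_n$, and H\"older continuity on $\|X_j - y\| \le h$ yields $p(y) \ge \lambda - a_n - Lh^\alpha$. Under our choice of bandwidth, the excess terms are all of order at most $\log n / n^{\alpha/(2\alpha+d)}$ (since $Lh^\alpha = LC_1^\alpha n^{-\alpha/(2\alpha+d)}$ is dominated by a constant multiple of $\log n / n^{\alpha/(2\alpha+d)}$), so enlarging $C_2$ absorbs the leftover constants and delivers the stated lower bound.

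For the left inclusion, suppose $p(y) \ge \lambda + C_2 \log n / n^{\alpha/(2\alpha+d)} + Lh^\alpha$. The goal is to exhibit a sample $X_j \in \widehat D(\lambda) \cap B(y,h)$, which proceeds in two steps. First, from the KDE bound $\widehat p_h(y) \ge p(y) - a_n$, choosing $C_2$ large enough forces $\widehat p_h(y) > \lambda \ge 0$, so $B(y,h)$ contains at least one sample point $X_j$ (and automatically $y \in B(X_j, h)$). Second, one verifies $\widehat p_h(X_j) \ge \lambda$ by chaining H\"older and the KDE bound, $\widehat p_h(X_j) \ge p(X_j) - a_n \ge p(y) - Lh^\alpha - a_n$, which exceeds $\lambda$ once $C_2$ has been enlarged to dominate the combined slack coming from $a_n$ and from the additional $Lh^\alpha$ picked up in transferring the density bound from $y$ to $X_j$.

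The main technical point is the left inclusion: because $\widehat L(\lambda)$ is defined not as a level set of $\widehat p_h$ but as a union of balls around those samples whose KDE value is at least $\lambda$, one cannot simply invoke sup-norm control of $\widehat p_h - p$ at $y$; instead a two-stage sample-witness construction is needed, first ensuring $B(y,h)$ is nonempty of samples via positivity of $\widehat p_h(y)$, and then transferring the density lower bound from $y$ to $X_j$ through H\"older continuity and concentration. Tracking the constants through these inequalities is bookkeeping that is ultimately collected into a single constant $C_2 = C_2(\|p\|_\infty, C_1, L, d)$, as claimed.
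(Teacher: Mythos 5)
Your proof is correct and takes essentially the same route as the paper's: both decompose the argument into a direct sup-norm-plus-H\"older step for the right inclusion and a two-stage witness construction for the left inclusion, in which positivity of $\widehat p_h(y)$ (via the spherical-kernel identity $\widehat p_h(y) = |B(y,h)\cap\{X_i\}|/(nh^dV_d)$) produces a sample $X_j\in B(y,h)$, and then H\"older continuity plus concentration show $\widehat p_h(X_j)\ge\lambda$. The only cosmetic difference is that you absorb the $Lh^\alpha$ bias term into the constant $C_2\log n / n^{\alpha/(2\alpha+d)}$ while the paper keeps it explicit; since $Lh^\alpha = LC_1^\alpha n^{-\alpha/(2\alpha+d)}$ is dominated by that term for $n\ge e$, the two formulations are equivalent after a constant adjustment.
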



As a direct corollary, we see that the DBSCAN algorithm,
with an appropriate choice of the bandwidth $h$, outputs a $\delta$-consistent cluster tree
with consistency rates that depend on  $\alpha$. 

\begin{corollary}\label{coro:DBSCAN}
Assume that $p \in \Sigma(L,\alpha)$, where $\alpha \in (0,1]$, and let $K$ be
	the spherical kernel. Then, there exist constants  $C_1$ depending on
	$\|p\|_{\infty}$,  $L$ and $d$   such that, 	if  $h =C_1 \left( \frac{\log(n)}{n} \right)^{\frac{1}{2\alpha +d }}$, 
	 	the cluster tree returned by the DBSCAN \Cref{algorithm:DBSCAN} is 
	$\delta$-consistent with rate
	$\delta_n \ge C \left( \frac{\log (n)}{n} \right) n^{ \frac{\alpha}{2\alpha + d}} $, 
	where $C= C(\|p\|_{\infty}, L,d)$.
\end{corollary}

\subsubsection{Consistency for $\alpha >1$}
\label{sec:MDBSCAN upper}
When $\alpha>1$,  \Cref{algorithm:DBSCAN} no longer delivers the optimal rate displayed in \eqref{delta.rate}, for  two  reasons. The first  reason stems from standard non-parametric density estimation considerations: when $\alpha > 1$ it becomes necessary to rely on  smoother kernels, namely $\alpha $-valid kernels as indicated before. This will lead to a bias $\| p - p_h \|_\infty$  of the correct order $O(h^\alpha)$.  
The second reason is more subtle: the straightforward arguments we used to handle the case of $\alpha \leq 1$ do not lead to optimal clustering rates  even if the kernel $K$ is chosen to be $\alpha$-valid. To exemplify, suppose we would like to cluster the sample points $\{X_i\}_{i=1}^n \cap \{ x \colon \widehat{p}_h(x) \ge \lambda \}$  for some $\lambda > 0$. The computationally efficient linkage rule implemented by DBSCAN  is to cluster  the points based on the connected components of the union-of-balls around them, i.e based on the connected components of
\[
\widehat{L}(\lambda) = \bigcup_{X_j \in  \{  \widehat{p}_h \ge \lambda \}}   B(X_{j},h ).
\]
Assume now that the gradient of $p$ has norm uniformly bounded by a constant $D$ for all $x \in L(\lambda)$. Then, 
\begin{equation}\label{eq:none vanishing gradient}
\max_{X_j \in  \{  \widehat{p}_h \ge \lambda \} } \sup_{x \in B(X_{j}, h)} |p(x) - p(X_{j})|  \leq D h,
\end{equation}
and, as a result, 
\begin{align}\label{eq:estimate general alpha}\left\{ p\ge \lambda +C \left( \sqrt{\frac{\log(n)}{nh^d}}+ h^\alpha \right) + D h \right
\}  \subset \widehat L(\lambda) \subset \left\{ p\ge
\lambda -C \left( \sqrt{\frac{\log(n)}{nh^d}} - h^\alpha \right) -Dh   \right \},
\end{align}
where   $ C \left( \frac{\log(n)}{{\sqrt {nh^d}}  } + h^\alpha \right) $ comes from  the  $L_{\infty}$ error bound   of $ \alpha  $-valid kernels as in \eqref{eq:KDE risk} and $D h$ is due to  \eqref{eq:none vanishing gradient}. As $h \rightarrow 0$, the term $D h$ dominates the bias term $C h^{\alpha}$, so that the optimal choice of $h$ is of the order $h \asymp \left(\frac{\log n}{n} \right)^{1/(2 + d)}$, which in turn yields a worse rate than \eqref{delta.rate} when $\alpha>1$. What is more, $\| \nabla p(X_j)\|>0  $ for any $X_j$  away from the critical points of $p $. This would mean that
\[
\min_{X_j \in  \{  \widehat{p}_h \ge \lambda \} } \| \nabla p(X_{j}) \| >0 ,
\] 
Then, as $h \rightarrow 0$,  $\sup_{x \in B(X_{j}, h)} |p(x) - p(X_{j})|  \approx \| \nabla p(X_{j}) \| h  =  \Theta(h)$. Thus, the inclusions in \eqref{eq:estimate general alpha} are tight, showing that the sub-optimal choice of $h$ cannot be ruled out. 
We believe that this phenomenon is not specific to DBSCAN only, but applies more broadly to the the class of single-linkage-type clustering algorithms. That is, it seems to us that such algorithms are in general unable, like our DBSCAN-based procedure in \Cref{algorithm:DBSCAN}, to take advantage of a higher degree of smoothness of the underlying density.

The issue outlined above can be handled in more than one way. A possible solution, which is nearly trivial but impractical, is to deploy a computationally inefficient algorithm that assumes the ability to evaluate the connected components of the upper level set of $\widehat{p}_h$ exactly: see \Cref{algorithm:clustering cc} in the appendix. It is immediate to see that this approach produces optimal $\delta$-consistency; see \Cref{coro:flat-upper} in \Cref{algorithm:clustering cc}. Unfortunately, this procedure will require evaluating $\widehat{p}_h$ on a fine grid, which is computationally infeasible even in small dimensions. The second, more interesting and novel solution which we describe next, is to further assume that $p$ satisfies additional mild regularity conditions around the split levels. The conditions are of geometric and analytic nature and are reminiscent of low-noise type assumptions in classification. Under these conditions, the modified DBSCAN \Cref{algorithm:MDBSCAN} will achieve the optimal rate \eqref{delta.rate} while remaining computationally efficient. This finding, stated formally in \Cref{prop:MDBSCAN} below,  is the main result of this section.

\begin{algorithm}[!h]

	\begin{algorithmic}
		\INPUT i.i.d sample $\{X_i\}_{i=1}^n$,  a  $ \alpha $-valid  kernel $K$ and $h>0$.
		\State 1. Compute $\{ \widehat p_h(X_i),  i = 1,\ldots,n \}$.
		\State 2.  For each $\lambda \geq 0 $,  construct a graph  $\mathbb G_{h,\lambda}$ with node set
		$$\widehat D(\lambda)=\{X_i: \widehat p_h(X_i) \ge \lambda\}$$ and edge set $\{ (X_i, X_j) :  X_i,X_j \in \widehat{D}(\lambda) \; \text{and} \; \| X_i-X_j\|< 2h\}$.  
		\State 3. Compute $\mathbb C(h,\lambda)$, the graphical connected components of $\mathbb G_{h,\lambda}$.
		\OUTPUT $\widehat{T}_n = \{ \mathbb C(h,\lambda) , \lambda \geq 0 \}$.
		\caption{The modified DBSCAN}
		\label{algorithm:MDBSCAN}
	\end{algorithmic}
\end{algorithm}

\begin{remark}
	Despite its seemingly different form, \Cref{algorithm:MDBSCAN} is nearly identical to \Cref{algorithm:DBSCAN}. The only difference is the use of an $ \alpha  $-valid  kernel $K$ instead of a spherical kernel. Furthermore, the procedures only require evaluating at most $n+1$ different graphs: 
	\[
	\mathbb G_{h,0}, \mathbb G_{h,\widehat{p}_h(X_{\sigma_1})}, \ldots,  \mathbb G_{h,\widehat{p}_h(X_{\sigma_n})},
	\] 
	where $(\sigma_1,\ldots,\sigma_n)$ is a permutation of $(1,\ldots,n)$ such that 
	\[
	\widehat{p}_h(X_{\sigma_1}) \leq \widehat{p}_h(X_{\sigma_2}) \leq \ldots \widehat{p}_h(X_{\sigma_n})
	\]
	And, again just like with \Cref{algorithm:DBSCAN}, the connected components of each $\mathbb C(h,\lambda)$ can be easily evaluated by maintaining a union-find structure.
\end{remark}

To formulate the the extra regularity conditions  on the geometry of the density $p \in \Sigma(\alpha,L)$ around the split levels that guarantee optimality of the clustering \Cref{algorithm:MDBSCAN} we first recall some notions commonly used in the literature on level set and support estimation. Below, $\Omega$  denotes a generic subset of $ \mathbb{R}^d$ of dimension $d$.\\
\\
{\bf C1.} (The Inner Cone Condition) The subset $\Omega$ satisfies the inner cone conditions if
ihere exist
constants 
$r_I,c_I>0$  such that, 
for any $0\le r\le r_I$ and 
$x\in \Omega $,
$$\mathcal L(B(x,r)\cap  \Omega) \ge c_IV_d r^d,$$
where   $\mathcal L $   denotes the Lebesgue measure of $\mathbb R^d$.\\ 
\\
{\bf C2.} (The Covering Condition) The subset $\Omega$ satisfies the covering condition 
if there exists a constant $C_I$ such that, 
for any $0< r\le r_I$, there exists a collection of points $\mathcal N_r\subset \Omega$ such that $card(\mathcal N_r)\le C_I r^{-d}$ 
and 
$$\bigcup_{y\in\mathcal N_r} B(y,r) \supset \Omega.$$

Both assumptions {\bf C1, C2}  are rather mild. If $\Omega$ is a compact manifold of dimension $b \leq d$ with piecewise Lipschitz boundary, both assumptions are automatically verified with the dimension $d$ replaced by the intrinsic dimension $b$.  
\citep[see, e.g.][]{do1992riemannian}. 
The inner cone condition {\bf
	C1} is used  in  \cite{korostelev2012minimax} and is well-known as the standard
condition \citep{cuevas2009set} or, more recently, the $(a,b)$ condition of \citep{Chazal:2015:CRP:2789272.2912112}. It is essentially equivalent to the level set regularity condition [B] in \cite{hausdorff}. The covering condition {\bf C2}  holds automatically if $\Omega$ is compact. See, e.g.,  \cite{rinaldo2010generalized} and  \cite{balakrishnan2012}.

Since  $p\in \Sigma(L,\alpha)$ with $\alpha > 1$, 
any level set  $\{p\ge \lambda\}$ is a union of connected $d$ dimensional manifolds with $C^1$ boundary.
Therefore it is natural  to require both {\bf C1} and {\bf C2} to hold simultaneously for all the upper level-sets of $p$ {\it right above the split levels.} Specifically, we will assume the following.  
\\
\\
{\bf C.}
There exists  a $\delta_0>0$ such that,
for any split level  $\lambda^* $ of $p$ and any $0<\delta \le \delta_0$, the set
$ \{x \colon p(x) \ge \lambda^*+\delta\} $ satisfies conditions {\bf C1} and {\bf C2} with
 constants $r_I$,$c_I$ and $C_I$ only depending on $p$. 
\\
\\
We also need the connected components of the upper level sets right above the split levels to satisfy a low-noise condition as follows. 
\\
\\
{\bf S($\alpha$)}.
There exist positive constants $\delta_S$ and $c_S$ such that, for each split level $\lambda^*$ of the density $p$, the following holds.
Let $\{\mathcal C_k\}_{k=1}^K$ be the connected components of $\{x \colon p(x) >\lambda^*\}$.
Then,
\begin{equation}\label{eq: holder_seperation_condition}
\min_{k\not = k'}d(\mathcal C_k \cap \{p\ge \lambda^*+\delta \} , \mathcal C_{k'} \cap \{p\ge \lambda^*+\delta \} ) \ge c_S \delta^{1/\alpha}, \quad \forall \delta \in (0, \delta_S].
\end{equation}

Condition {\bf S($\alpha$)} constrains the behavior of the density only around the split levels. It is a fairly common assumption in the literature: it coincides with the {\it separation exponent} condition of \cite{steinwart:15} (see Definition 4.2 therein), which quantifies the separation of distinct connected components right above the split levels. Furthermore, {\bf S($\alpha$)} is implied by {\it the local density regularity} conditions of \cite{hausdorff}, which in turn is used in \cite{jiang2017density} to define the {\it $\beta$-regularity}  condition for cluster separation. 
\cite{steinwart:15} provides several specific examples of densities satisfying the {\bf S($\alpha$)} condition. In fact, we prove that conditions {\bf C} and {\bf S($\alpha$)} are verified in a  large non-parametric class  of functions. This class consists of  Morse density functions, which are widely used in
the density based clustering and mode estimation and topological data analysis;
see, e.g., \cite{chacon2015population}, \cite{arias2016estimation} and
references therein. We recall that a function $p$ is Morse if all its critical points have a non-degenerate Hessian. An equivalent and more intuitive condition is that $p$ behaves like a quadratic function around its critical points.

\begin{proposition}
 \label{lemma:properties of morse}
 	Suppose $p : \mathbb R^d \to \mathbb R $ is a Morse function. Then $ p$ satisfies {\bf C} and {\bf S(2)}.
\end{proposition}

Another interesting class of density functions satisfying conditions {\bf C} and {\bf S($\alpha$)} can be obtained as follows.
Let $\alpha\ge 2$ be any integer and $f_1: [0,1]\to \mathbb R$ be such that 
$f_1(x) =(x-2)^\alpha$. Then, there exists a polynomial $f_2$ of degree $\alpha$ such that the function on $\mathbb{R}$ defined point-wise as
$$f(x) =
\begin{cases}f_1(x), \quad x\in [1,2]
\\
f_2(x), \quad x\in [0,1]
\\
0, \text{ otherwise,}
\end{cases} $$ 
has continuous  derivatives up to order $\alpha-1$  and is such that $f(0)=f'(0)= ,\ldots, =f^{(\alpha-1)}=0 $.  When $\alpha=3$, $f$ is a natural spline.
For any integer $d \geq 1$, let $ F :\mathbb R^d \to \mathbb R$ be such that $F(x) = f(\|x\|_2) $.  Then $ F \in \Sigma(\alpha, L)$.
Denote $x_0 = (2,0,\ldots,0) $.
Let $G (x) = F(x-x_0) +F(x+x_0) $. It is easy to see that  for any $ 0<\delta\le 1$
$$ \{ G(x )\ge \delta  \} = B(x_0, 2-\delta^{1/\alpha}) \cup B(-x_0, 2-\delta^{1/\alpha}) .$$
As a result, conditions {\bf C} and {\bf S($\alpha$)} are trivially satisfied in this simple case.

Our main result of this section is to prove that that the conclusion of \Cref{coro:DBSCAN} still holds for $\alpha>1$, provided that the conditions {\bf C} and {\bf S($\alpha$)}  are met.

\begin{theorem}
	\label{prop:MDBSCAN}
	Let $p\in \Sigma(\alpha>1,L)$ be any density function with compact and connected support and
	finitely many split levels.   Suppose that conditions {\bf C} and {\bf S($\alpha$)}
	hold for $p$.	If 
	$h \asymp \left( \frac{ \log(n)}{n} \right)^{1/(2\alpha+d) })$,  then, with probability at least   $ 1-\frac{1}{n} - O( h^{-d} \exp (-c  n^{\alpha/(2\alpha+d)})) $,  the cluster tree returned by the modified DBSCAN \Cref{algorithm:MDBSCAN} is $\delta$-consistent with rate
		$$ \delta_n \geq 2a_n +(4h/c_S)^{\alpha}  $$
	where $c$ is a constant that depends on $p$ only,
 $a_n  =C_1 \sqrt{\frac{(\log n+\log (1/ h))}{nh^d}} +C_2 h^{\alpha}$ is the right hand side of the inequality in \eqref{eq:KDE risk} and $c_S$ is defined in {\bf C}. 

\end{theorem}
The choice of the parameter $h$ in \Cref{prop:MDBSCAN} yields that \Cref{algorithm:MDBSCAN} is $\delta$ consistent with rate given by \eqref{delta.rate}.

\subsection{Lower bounds}

\label{subsection:flat-lowerbound}
Next, we show that the consistent rates  of the DBSCAN
algorithm derived in the previous sections
are nearly minimax optimal, save for a $\log(n)$ term.
We point out that the lower bound results by
\cite{chaudhuri2014consistent} are not directly applicable to our problem, since they rely on discontinuous densities.  

\begin{theorem} \label{lemma:Holder-lower-bound} Suppose  $d\ge 1$ and  $\alpha
	> 0$.
	There exists a finite family  $\mathcal{F}$   of $d$-dimensional probability density functions 
	belonging to the H\"{o}lder class  $\Sigma (L,\alpha)$  satisfying the conditions {\bf C},  {\bf S($\alpha$)} and uniformly
	bounded from above by $C_0$,  and a constant
	$\K$, depending on $L$ and $\alpha$, such that  when 
	\[
	n\ge\frac{4^d8\log(32)}{V_d} \quad \text{and} \quad 
	\delta\le \min \left\{\left( \frac{\mathcal
		K}{16^{\alpha}(7C_0)^{\alpha/d} }\right) ,\|p\|_{\infty}/(2^{d/2+1})\right\},
	\]
	where $V_d$ denote the volume of a $d$ dimensional ball, the following holds. 
	If cluster tree estimator  is
	$(\delta_n,1/4)$-accurate
	when presented with an i.i.d. sample from a density function in $\mathcal{F}$,
	then it must be the case that
	\begin{equation}\label{eq:flat-lower-1}
	n\ge \frac{C_0 \K^{d/\alpha}}{C\delta_n^{2+d/\alpha}},
	\end{equation} 
	for some constant $C$ only depends on $d$.
\end{theorem}
  Therefore,  
	with the constant $C_0$ in  the previous theorem and the dimension $d$ fixed,
the bounds obtained in \Cref{prop:MDBSCAN}  and \Cref{coro:DBSCAN} match the  minimax bound in \eqref{eq:flat-lower-1}, up to a $\log(n)$ factor. 
	Thus, together they show that, up to log factors,  the optimal rate for
	$\delta$-consistency of density functions in $\Sigma(L,\alpha)$ is  of order $\left(\frac{\log(n)}{n}\right)^{\alpha/(2\alpha+d)}$. 

Interestingly, the minimax clustering rates we derived match the rates for estimation of a H\"{o}lder density $p$ under $L_\infty$ norm; see \Cref{sec:holder.recap}. While this result may not be entirely surprising in light of the findings of, e.g., \cite{eldridge2015beyond} and \cite{inference.tree}, such a a connection has never been formally established, to the best of our knowledge. In particular, our results seem to settle, at least for the class of  H\"{o}lder-continuous densities and with respect to the criterion of $\delta$-consistency, a long-standing open problem of how  density-based clustering compares to density estimation: both problems exhibit the same degree of statistical difficulty.


\subsection{Consistent Estimate of The Split Levels}
\label{section:consistent split level}
In this section, we present a simple pruning strategy, leading to consistent
estimators of the split levels of the the density. While 
pruning strategies and consistent estimation of split levels have been considered by several authors, such as \cite{sriperumbudur2012consistency,steinwart:15}, \cite{chaudhuri2014consistent} and \cite{jiang2017density}, the existing results do not yield error bounds that depend on the degree of smoothness $\alpha$ for density
$p \in \Sigma(L,\alpha)$ with $\alpha>1$. 
\\
\\
The following definition provides  a way to identify  significant  split levels in
the cluster tree estimator returned by \Cref{algorithm:MDBSCAN}.
\begin{definition}\label{definition:pruning} Let   $\Delta >0$. 
	The random variable $\widehat {\lambda^*} \in (0,\infty)$ is said to be  a $\Delta$-significant split level of
	the   cluster tree estimator if there exist two data points $X_i,X_j\in  D (\widehat {\lambda^*} +\Delta)$ such that   
	\begin{equation}
	\widehat {\lambda^*} =\sup\{\lambda>0: X_i \text{ and } X_j \text{ are in the same
		connected component of }
	\mathbb C (h, \lambda )   .  
	\}
	\label{eq:find-split-hat}
	\end{equation}
\end{definition}

Below, we show that there is a one to one correspondence between $\Delta$-significant split levels  of the modified DBSCAN cluster tree estimator from \Cref{algorithm:MDBSCAN} and the  split levels of the population density under a slightly stronger covering condition than condition {\bf C} given above. Specifically, we assume the following.
\\
\\
{\bf C'.}
There exists a constant $\delta_0>0$ such that,
for any split level  $\lambda^* $ of $p$ and any $\delta \in \mathbb{R}$ with $|\delta| \le \delta_0$,
$ \{p\ge \lambda^*+\delta\} $ satisfies conditions {\bf C1} and {\bf C2} with
 constants $r_I$,$c_I$ and $C_I$ only depending on $p$. 
\\
\\
The only difference between {\bf C} and {\bf C'} is that while condition {\bf C}
assumes some regularity of $p$ only above split levels, {\bf C'}
requires the same type of regularity {\it around} split levels.

\begin{proposition} \label{prop:find split level}
	Suppose condition {\bf C'} and {\bf S($\alpha$)}  hold. 
	Let $\Delta =  2a_n +(4h/c_S)^{\alpha}  $
	where $a_n$ is  defined in  \eqref{eq:KDE risk} and 	$h=C_1n^{-1/(2\alpha+d) }$. 
	Suppose $p$ has finitely many split levels.
	Then,  with probability at least $1-1/n -O( h^{-d} \exp (-c  n^{\alpha/(2\alpha+d)})) $, 
	the following additional results hold:
	\\
{\bf 1.} 
		Let $\lambda^*$ be  a split level of the density $p$. 
		Suppose $ \mathcal C$ and $\mathcal C'$ are two open sets splitting  at $\lambda^*$ (see \Cref{definition:split at a level}) and that 
		\begin{equation}\label{eq:split regular}
		\min\{P\left(\C \cap \{p\ge\lambda^*+2\Delta\}\right),P\left(\C' \cap \{p\ge\lambda^*+2\Delta\}\right)\}>0.
		\end{equation}
		Then, there exists 
		a  $\Delta$-significant split level $\hatsplit$ of the cluster tree  estimator returned by the modified DBSCAN such that
		\begin{equation}
		\label{eq:split-consistent}
		|\lambda^*-\hatsplit | \le \Delta.
		\end{equation}
{\bf 2.} 
		Conversely, suppose that $\hatsplit$  is a $\Delta$-significant split level of the
		cluster tree estimator.
		Then there exists a split level $\lambda^*$ of $p$ such that
		\begin{equation}
		|\lambda^*-\hatsplit | \le \Delta.
		\end{equation}
\end{proposition}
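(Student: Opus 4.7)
The plan is to establish both assertions by tracking, for each relevant pair of sample points, the correspondence between their merge level in the modified DBSCAN cluster tree and their merge level in the population cluster tree $T_p$. Throughout I would condition on the joint event that $\|\widehat{p}_h - p\|_\infty \leq a_n$ (which holds with probability at least $1-1/n$ by \eqref{eq:KDE risk}) and that every ball of radius $h/2$ centered at a point of $\{p \geq \lambda_0/2\}$ contains a sample point (which holds with probability $1 - O(h^{-d} \exp(-c n^{\alpha/(2\alpha+d)}))$ by a covering argument using $\mathbf{C'}$ and a Chernoff bound). The key geometric input is that the choice $\Delta = 2a_n + (4h/c_S)^\alpha$ is precisely tuned so that $\mathbf{S}(\alpha)$ yields $c_S(\Delta - a_n)^{1/\alpha} \geq 4h > 2h$: any two points of $\widehat{D}(\lambda)$ with $\lambda > \lambda^* + \Delta$ lying in distinct connected components of $\{p > \lambda^*\}$ are separated by more than $2h$, so no edge of $\mathbb{G}_{h,\lambda}$ can cross a split.

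For Part 1, fix a split level $\lambda^*$ with sets $\mathcal{C},\mathcal{C}'$ satisfying \eqref{eq:split regular}. By the positive-mass hypothesis, with high probability there are witnesses $X_i \in \mathcal{C} \cap \{p \geq \lambda^* + 2\Delta\}$ and $X_j \in \mathcal{C}' \cap \{p \geq \lambda^* + 2\Delta\}$; define $\widehat{\lambda^*}$ by \eqref{eq:find-split-hat} for this pair. The upper bound $\widehat{\lambda^*} \leq \lambda^* + \Delta$ follows immediately from the edge-separation remark above applied to any $\lambda > \lambda^* + \Delta$. For the lower bound $\widehat{\lambda^*} \geq \lambda^* - \Delta$, since $\mathcal{C}$ and $\mathcal{C}'$ lie in a common connected component of $\{p \geq \lambda^*\}$, one may pick (using $\mathbf{C'}$) a continuous curve from $X_i$ to $X_j$ inside $\{p \geq \lambda^* - \Delta/2\}$, cover it by $O(h^{-d})$ balls of radius $h/2$, and on the good covering event extract a chain of sample points whose consecutive distances are at most $2h$ and whose $\widehat{p}_h$-values exceed $\lambda^* - \Delta$; this chain is a path in $\mathbb{G}_{h,\lambda^*-\Delta}$ linking $X_i$ to $X_j$. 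Finally, $\Delta$-significance follows since the sharper form of the upper bound $\widehat{\lambda^*} \leq \lambda^* + a_n + (2h/c_S)^\alpha \leq \lambda^* + \Delta/2$ gives $\widehat{\lambda^*} + \Delta \leq \lambda^* + 3\Delta/2 \leq \lambda^* + 2\Delta - a_n \leq \widehat{p}_h(X_i)$, and similarly for $X_j$.

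For Part 2, given a $\Delta$-significant $\widehat{\lambda^*}$ with witnesses $X_i, X_j \in \widehat{D}(\widehat{\lambda^*}+\Delta)$, the KDE bound gives $p(X_i), p(X_j) \geq \widehat{\lambda^*}+\Delta - a_n > 0$, so since the support of $p$ is compact and connected the merge level $\lambda_{ij}^p := \sup\{\lambda : X_i \text{ and } X_j \text{ lie in a common component of } \{p \geq \lambda\}\}$ is well-defined, and by \Cref{lemma:well-defined-2} applied to the components of $\{p > \lambda_{ij}^p\}$ containing $X_i$ and $X_j$, it is a split level of $p$. The inequality $|\widehat{\lambda^*}-\lambda_{ij}^p| \leq \Delta$ then follows by symmetric reruns of the two Part-1 estimates: if $\lambda_{ij}^p > \widehat{\lambda^*}+\Delta$, then the covering/path construction would link $X_i$ and $X_j$ in $\mathbb{G}_{h,\widehat{\lambda^*}+\eta}$ for small $\eta > 0$, contradicting \eqref{eq:find-split-hat}; and if $\lambda_{ij}^p < \widehat{\lambda^*}-\Delta$, then the edge-separation argument at level just below $\widehat{\lambda^*}$ would force $X_i$ and $X_j$ into distinct graph components, again a contradiction.

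The main obstacle is the covering/path step in the lower-bound direction. One must simultaneously ensure the existence of a continuous curve inside the prescribed level set (using condition $\mathbf{C'}$ just below the split level), the uniform existence of sample points in a covering by $O(h^{-d})$ balls of radius of order $h$ (giving rise to the $O(h^{-d}\exp(-c n^{\alpha/(2\alpha+d)}))$ failure term in the stated probability), and the Hölder propagation of density bounds so that the selected sample points have $\widehat{p}_h$-values comfortably above the target level. The balance between $\mathbf{S}(\alpha)$ (which governs above-split-level separations) and $\mathbf{C'}$ (which governs covering both above and below split levels) is what dictates the tuning $\Delta = 2a_n + (4h/c_S)^\alpha$ and explains why both conditions must hold around every split level.
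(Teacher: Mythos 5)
Your proposal follows essentially the same route as the paper's proof: condition on the KDE error event plus a covering-net event with failure probability $O(h^{-d}\exp(-cn^{\alpha/(2\alpha+d)}))$, use the covering to establish that the two witnesses are still linked in the graph at level $\lambda^*-a_n$ (lower bound on $\widehat{\lambda^*}$), use condition $\mathbf{S}(\alpha)$ with the tuning $c_S(\Delta-a_n)^{1/\alpha}\ge 4h$ to show no graph edge can cross the split above level $\lambda^*+\Delta-a_n$ (upper bound on $\widehat{\lambda^*}$), and for Part~2 identify the population merge level of the witnesses as a split level via \Cref{lemma:well-defined-2} and rerun both bounds. The paper packages the covering step as the set inclusion $\{p\ge\lambda^*\}\subset\widehat L(\lambda^*-a_n)$, in parallel with \eqref{eq:C MDBSCAN}, rather than as an explicit sample-point chain, and invokes Claim~2 from the proof of \Cref{prop:MDBSCAN} for the separation step; these are cosmetic differences.

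One small but genuine flaw: your $\Delta$-significance argument needs the strict inequality $\widehat{\lambda^*}+\Delta\le\widehat p_h(X_i)$, and to get it you appeal to a ``sharper'' bound $\widehat{\lambda^*}\le\lambda^*+a_n+(2h/c_S)^\alpha$. But that bound comes from $\mathbf{S}(\alpha)$ at $\delta=(2h/c_S)^\alpha$, which gives only $\ge 2h$ separation, and \Cref{algorithm:MDBSCAN} puts an edge whenever $\|X_i-X_j\|\le 2h$ — so the argument fails on the boundary. The cleaner fix, which is what the paper does, is to track the $-a_n$ offset directly in the upper bound: Claim~2 applied at $\lambda'=\lambda^*+\Delta$ yields $\widehat{\lambda^*}\le\lambda^*+\Delta-a_n$, and then $\widehat{\lambda^*}+\Delta\le\lambda^*+2\Delta-a_n\le\widehat p_h(X_i)$ follows at once without any sharpening. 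A second, minor, issue is that the good covering event should be phrased over a finite $O(h^{-d})$-point net of the relevant level sets rather than over ``every ball of radius $h/2$ centered at a point of $\{p\ge\lambda_0/2\}$''; the latter is literally uncountable and the union bound you invoke only applies after reducing to a net via $\mathbf{C'}$.
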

Proposition \ref{prop:find split level}  says that, with high probability,
every $\Delta$-split level corresponds to a density split level
and that conversely, any split level of the density $p$ can be found if we have enough data.
To prune the cluster tree returned by the modified DBSCAN algorithm, it suffices to remove all the split levels that are not $\Delta$ significant.

\section{Densities with Gaps}

\label{section:gap}

We now consider the particular scenario where the  density $p$
exhibiting a jump
discontinuity in such a way that,  for all levels $\lambda$ in a given interval of
length $\epsilon$, the upper level sets $\{ x \colon p(x) \ge \lambda\}$
 do not change. The value of $\epsilon$
is referred to as  the {\it gap size.}    We provide a formal definition next, which we formulate within the general measure-theoretic language of \cite{steinwart:15} since the underlying densities are not continuous.  We recall that $\mathcal{L}$ denotes the Lebesgue measure on $\mathbb{R}^d$.

\begin{definition}[Distribution with a gap]\label{def:gap}
Let $P$ a probability measure on $\mathbb{R}^d$, absolutely continuous with respect to the Lebesgue measure. For any $\lambda^* > 0$, let $S_{\lambda^*}$ be the support of the sub-probability measure 
\[
A \mapsto  P(A \cap \{ x \colon p'(x) \geq \lambda^*\}), \quad A \text{ Lebesgue measurable,}
\]
where $p'$ is any density of $P$. 
Then, $P$ is said to have a gap at   $\lambda_*$ of size $\epsilon$, where $0 < \epsilon < \lambda^*$, if   $P(S_{\lambda^*}) > 0 $ with $\mathcal{L}(\partial S_{\lambda^*}) = 0$ and
\begin{equation}\label{eq:gap.steinwart}
S_{\lambda_* - \eta}  \backslash S_{\lambda_* } = \emptyset, \quad \forall \eta \in (0,\epsilon).
\end{equation}

\end{definition}
We impose the condition that   $\mathcal{L}(\partial S_{\lambda^*}) = 0$ in order to avoid pathological cases.

The above definition is independent of the choice of the density of $P$. At the same time, it also implies that $P$ admits a Lebesgue density $p$ such that 
\begin{equation}\label{eq:Sl}
S_{\lambda^*} = \mathrm{cl} \left(  \{  x \colon p(x) \geq \lambda^* \}\right) \quad \text{and} \quad
\mathrm{cl}(S_{\lambda^*}^c) = \mathrm{cl} \left(  \{  x \colon p(x) \leq \lambda^*  - \epsilon \}\right).
 \end{equation}
Here, given any set $A$, $\mathrm{cl}(A)$ denotes the closure of $A$.  Indeed, it is not hard to see that, if $p'$ is any Lebesgue density of $P$, then the function 
\[
p(x)= \left\{ 
\begin{array}{ll}
\max\{ p'(x), \lambda^*\} & x \in S_{\lambda^*}\\
\min\{ p'(x), \lambda^* - \epsilon \} & x \in S_{\lambda^*}^c
\end{array}
\right.
\]
is also a density of $P$, and satisfies \eqref{eq:Sl}. 
Thus, with a slight abuse of notation, we may also speak of  ``the'' density $p$ even in this case, with the understanding that we are referring to any density of $P$ for which \eqref{eq:Sl} holds. See \Cref{fig:gapexample} for an illustration.

\begin{figure}[H]
	\begin{center}
		\includegraphics[scale=0.39]{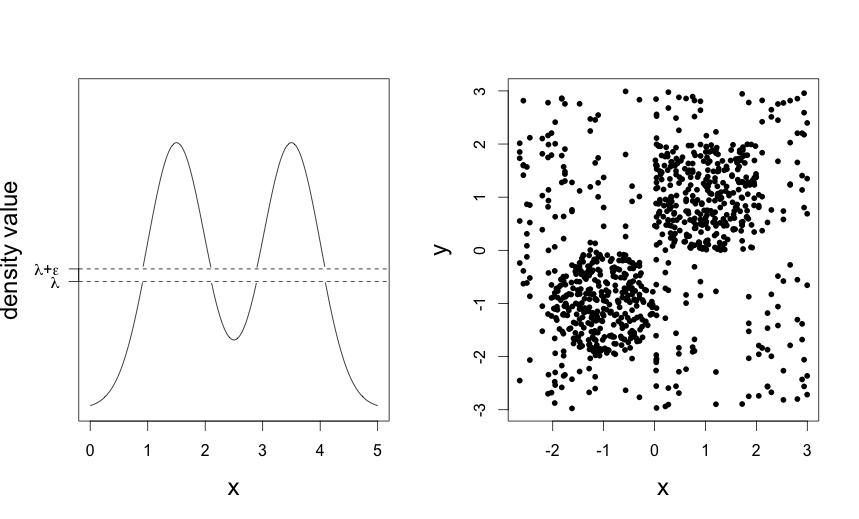}
		\caption{The left plot depicts a one dimensional density with gap of size $\epsilon$ at level $\lambda$.
			It is clear that $\{ \lambda < p \le \lambda +\epsilon \}$ is an empty set. The right plot depicts  500 
			i.i.d sampling from  a two dimensional density with a gap. It is clear that the density is low in the background and high in the  disk centered at (-1,-1) and  in the 
			square centered at (1,1). Finding the samples points with low density values can be thought of as outliers detection in this case. }
		\label{fig:gapexample}
	\end{center}
\end{figure}
Though fairly restrictive, the scenario of a distribution with a gap is quite interesting for the purpose
of both clustering and level set estimation. 
Indeed,  this  situation encompasses the ideal clustering
scenario, depicted as  examples  in Figures
1 and 5 in the original DBSCAN paper \cite{ester1996density}, of a piece-wise
constant density that is low everywhere on its support with the exception of a few
connected, full-dimensional
regions, or clusters, where it is higher by a certain amount (in our case
the gap $\epsilon$). 
The
size of the gap parameter $\epsilon$ and the minimal distance among
clusters both affect the  difficulty of the clustering task, which becomes
harder as the parameters get smaller. 

To formally capture the dependence on the distance between clusters we set 
\[
S_{\lambda^*}  =\bigcup_{i=1}^I \mathcal C_i,
\] 
 where $(\mathcal C_1,\ldots, \mathcal  C_I)$ are disjoint, connected, (necessarily) closed sets of positive $P$ measure and let 
\begin{equation}\label{eq:sigma}
	\sigma = \min_{i\not = j}\mathrm{dist}(\mathcal
	C_i,\mathcal C_j) > 0
	\end{equation}
	be the minimal distance between them. The separation parameter $\sigma$ captures an aspect of the intrinsic difficulty of the clustering task that is complementary to the one quantified by gap parameter $\epsilon$: clusters that are at a small distance $\sigma$ from each other are hard to separate, for any given value of $\epsilon$. In our analysis, we let both the gap parameter $\epsilon$ and the separation parameter $\sigma$ vary with $n$ (though we do not make this dependence in out notation for ease of readability), thus allowing for harder clustering problems as a function of the sample size.


In the next simple result, 
we show that 
a flat version of the vanilla DBSCAN algorithm given in \Cref{algorithm:DBSCAN}, with suitable choices for the input parameters, can optimally estimate the clusters at $\lambda^*$ at a rate that depend explicitly on both $\epsilon$ and $\sigma$.

\begin{proposition}\label{prop:consistency-gap}
	Let $\{X_1,\ldots,X_n\}$ be an i.i.d. sample from a probability distribution $P$ that has a gap of size $\epsilon$ at level $\lambda^*$. Set $a_n=C_1 \sqrt{\frac{\log(n) +\log (1/ h)}{nh^d}}$  as in \eqref{eq:an} and suppose the
	input parameters $h$ and $k$ of the DBSCAN algorithm  are such that
	\begin{equation}\label{eq:gap-constrain-clustering}
	\sigma/4\ge h \ge C\left(\frac{\log(n)}{n \epsilon^2 }\right)^{1/d} \quad
	\text{and} \quad  k = \lceil n h^d
	V_d\lambda \rceil ,
	\end{equation}
for any $C>0$ such that $2 a_n < \epsilon$ and	any $\lambda$ in  $(\lambda^* -\epsilon + a_n,\lambda^*- a_n ]$.
	Then   with probability at least $1-1/n$:
	\begin{itemize}
		\item[i.] simultaneously over all  connected sets $A$ such that $ A_{2h}
		\subset \mathcal  C_i$, for some $i$, all the sample points in  $A$, if any, belong to the
		same connected component of $\mathbb{G}_{k,h}$;
		\item[ii.] simultaneously over all connected sets $A$  and $A'$ such that $ A_{2h}
		\subset \mathcal  C_i$ and $ A'_{2h}
		\subset \mathcal  C_j$, for some $i \not = j$, the sample points in  $A$ and
		$A'$, if any, belong to distinct connected components of $\mathbb{G}_{k,h}$. 
	\end{itemize}
\end{proposition}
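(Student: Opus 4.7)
The plan is to work on a single high-probability event on which the graph $\mathbb G_{h,k}$ faithfully reflects the cluster structure of $S = \bigcup_i \mathcal C_i$, and then to deduce both (i) and (ii) deterministically via \Cref{remark:graph-level-set}. Throughout, write $\lambda = k/(nh^dV_d)\in(\lambda_*+a_n,\lambda^*-a_n]$ and recall that $p_h(x)=P(B(x,h))/(h^dV_d)$.

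\textbf{Concentration and node identification.} Invoke \eqref{eq:variance of density} to obtain the event $\mathcal E_1=\{\|\widehat p_h-p_h\|_\infty\le a_n\}$ with probability at least $1-e^{-\gamma}/2$ (by adjusting $C$). The gap structure immediately gives $p_h(x)\ge\lambda^*$ whenever $B(x,h)\subset S$ (equivalently, $x\in S_{-h}$), and $p_h(x)\le\lambda_*$ whenever $B(x,h)\cap S=\emptyset$ (equivalently, $x\notin S_h$). Hence on $\mathcal E_1$, the node set $\widehat D(\lambda)$ contains every sample in $S_{-h}$ and excludes every sample outside $S_h$. Moreover, for any node $X_j$, $p_h(X_j)\ge\lambda-a_n>\lambda_*$ forces $B(X_j,h)\cap S\neq\emptyset$, so $X_j\in S_h=\bigcup_i\mathcal C_{i,h}$. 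Since $h\le\sigma/4$ implies $d(\mathcal C_i,\mathcal C_j)\ge 4h$, the enlargements $\{\mathcal C_{i,2h}\}$ are pairwise disjoint, and each $B(X_j,h)$ lies inside a unique $\mathcal C_{i,2h}$. Consequently, $\widehat L(\lambda)=\bigsqcup_i\widehat L_i$ with $\widehat L_i\subset\mathcal C_{i,2h}$.

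\textbf{Coverage of $\mathcal C_{i,-2h}$.} Build an $h/2$-net $N=\bigcup_i N_i$ of $\bigcup_i\mathcal C_{i,-2h}$ of cardinality $|N|=O(h^{-d})$, where each net point $z\in N_i$ satisfies $B(z,h/2)\subset\mathcal C_{i,-3h/2}$. Since $p\ge\lambda^*\ge\epsilon$ on $\mathcal C_i$, one has $P(B(z,h/2))\ge\epsilon V_d(h/2)^d$. A union bound yields an event $\mathcal E_2$ of probability at least $1-|N|\exp(-n\epsilon V_d(h/2)^d)$ on which every such ball contains at least one sample point; the bandwidth constraint $h\ge C_1(\log n/(n\epsilon^2))^{1/d}$ (combined with $\epsilon\le 1$) makes this probability exceed $1-e^{-\gamma}/2$ after choosing $C_1$ large. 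On $\mathcal E_2$, given $y\in\mathcal C_{i,-2h}$, pick $z\in N_i$ with $\|y-z\|\le h/2$ and a sample $X_j\in B(z,h/2)\subset\mathcal C_{i,-3h/2}\subset S_{-h}$; by the previous paragraph $X_j$ is a node, and $\|y-X_j\|\le h$ gives $y\in B(X_j,h)\subset\widehat L(\lambda)$. Thus $\mathcal C_{i,-2h}\subset\widehat L(\lambda)$ for every $i$.

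\textbf{Deterministic conclusion.} Fix $\mathcal E_1\cap\mathcal E_2$, which has probability at least $1-e^{-\gamma}$. Observe that $A_{2h}\subset\mathcal C_i$ is equivalent to $A\subset\mathcal C_{i,-2h}$. By the coverage step, $A\subset\widehat L(\lambda)$, and being connected, $A$ lies in a single connected component of $\widehat L(\lambda)$; the first bullet of \Cref{remark:graph-level-set} gives (i). For (ii), by the placement step $A\subset\widehat L(\lambda)\cap\mathcal C_{i,2h}=\widehat L_i$ and $A'\subset\widehat L_j$ with $\widehat L_i\cap\widehat L_j=\emptyset$, so $A$ and $A'$ belong to distinct connected components of $\widehat L(\lambda)$, and the second bullet of \Cref{remark:graph-level-set} yields (ii).

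\textbf{Main obstacle.} The delicate part is the coverage step: the net has to be coarse enough that $|N|=O(h^{-d})$ is swallowed by the exponential $\exp(-n\epsilon V_d(h/2)^d)$, but fine enough that the sample points it produces automatically sit in $\mathcal C_{i,-h}$ (so that the concentration step certifies them as nodes). Reconciling the two geometric scales involved, one from the gap ($a_n<\epsilon/2$, forcing $nh^d\epsilon^2\gtrsim\log n$) and one from the separation ($h\le\sigma/4$), is where all quantitative work is concentrated; everything else is a clean application of \Cref{remark:graph-level-set}.
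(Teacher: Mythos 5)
Your proof is correct, and reaches the same endpoint via the same two-layer reduction (estimate the node set, then decompose $\widehat L(\lambda)$ by the $\mathcal C_{i,2h}$), but the coverage step takes a genuinely different and less efficient route than the paper's. The paper's Lemma~\ref{lemma:inclusionofgap2} obtains $\mathcal C_{i,-2h}\subset\widehat L(\lambda)$ purely deterministically on the single concentration event $\{\|\widehat p_h-p_h\|_\infty\le a_n\}$: if $x\in\mathcal C_{i,-2h}$ then $p_h(x)\ge\lambda^*$, hence $\widehat p_h(x)\ge\lambda^*-a_n>0$; but for the spherical kernel $\widehat p_h(x)>0$ is \emph{equivalent} to $B(x,h)$ containing a sample point, which then necessarily lies in $S_{-h}$ (since $B(x,h)\subset S_{-h}$) and is therefore a node by Lemma~\ref{lemma:inclusionofgap}. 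No separate covering/union-bound argument is needed, and no probability budget is spent on it. By contrast, you introduce an $h/2$-net of $\bigcup_i\mathcal C_{i,-2h}$ and a second event $\mathcal E_2$ guaranteeing every net ball catches a sample. This works, but (a) it forces you to split the failure probability between $\mathcal E_1$ and $\mathcal E_2$, (b) the cardinality bound $|N|=O(h^{-d})$ tacitly requires $S$ to be contained in a bounded region with a controlled diameter, an assumption the paper never needs to invoke here, and (c) it misses the small observation — special to the spherical kernel — that makes the paper's argument clean. The trade-off is that your version would survive unchanged if $K$ were replaced by a kernel without the positivity-implies-nearby-sample property, so it is in that sense more general; but for Algorithm~\ref{algorithm:DBSCAN} specifically it is strictly heavier machinery.
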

The definition of $A_{2h}$ and $A_{-2h}$ can be found in \eqref{eq:in and out}.  Notice that the above results hold true for all $n$ large enough such that $a_n < \epsilon/2$.
 \Cref{prop:consistency-gap} implies the DBSCAN
algorithm will yield clustering consistency, in the sense of \cite{chaudhuri2014consistent}, provided that its input parameters fulfill \eqref{eq:gap-constrain-clustering} holds.
In particular, this result requires that the sample size relates to the gap parameter $\epsilon$ and the separation parameter $\sigma$ according to the inequality
\[
n \ge C  \frac{1}{\epsilon^2 \sigma^d},  
\]
for some constant $C$, depending on $d$. In fact, such scaling is nearly minimax optimal:   no other 
clustering algorithms can guarantee cluster consistency under the same
assumptions and with a better sample complexity as a function of both
$\epsilon$ and $\sigma$. This results follows from the lower bound guarantee given in Theorem
VI.1 of
\cite{chaudhuri2014consistent}, where we take notice that the parameters $\sigma$ and $\epsilon$ have different, though related, meaning; see  \Cref{section:lower bound clustering at the gap}.
In fact, the results in \Cref{prop:consistency-gap} can be further extended to hold over more general settings of arbitrary densities; see  \Cref{sec:general}.

We conclude by noting that the gap size $\epsilon$ and the separation parameters $\sigma$ quantify two very separate notions of intrinsic difficulty of clustering that are unrelated to each other, and the clustering problem becomes impossible whenever either one of them becomes so small to violate the lower bound \eqref{eq:clusteer.minimax}, regardless of the other. In particular, it is easy to give examples in which the clustering task is impossible to solve because $\epsilon$ is too small even if $\sigma$ is large, and the other way around. As a result, the overall hardness of the clustering problem around the gap is a combination of these two parameters. This is in contrast with the settings considered earlier, where, due to the smoothness of the underlying density, only one parameter is sufficient to capture separation among clusters.

\subsection{The Devroye-Wise estimator  of the Level Sets}
\label{subsection:level set at the gap}
The assumption of a density with gap allows us to carry out a further analysis
of the DBSCAN algorithm, showing that it is also minimax  optimal for
estimating the level set itself $S_{\lambda^*}$. For this purpose, the
DBSCAN algorithm reduces to the renown Devroye-Wise estimator:  see \cite{DW}.
Below we provide a novel, sharper analysis of  this estimator, where  we  allow the
size of the gap $\epsilon$ to decrease with $n$, and demonstrate
that its rate-optimal (again, we will not explicitly express this dependence in our notation for simplicity).  
To the best of
our knowledge, such scaling has not been previously established. 

Recall that the DBSCAN algorithm with inputs $k$ and $h$
outputs a set of nodes $\mathbb G_{h,k}$. One then may
construct the estimator
\begin{equation}
	\widehat S_h = \bigcup_{X_j\in \mathbb G_{h,k} }B(X_j,h)
\end{equation}
comprised of a union of balls around such points, and use it as an estimator of the corresponding high density region 
$
S = S_{\lambda^*} = \cup_{i=1}^I \mathcal{C}_i
$
consisting of all the clusters.

We measure the performance of any estimator $\widehat{S}$ with the Lebesgue
measure of its symmetric difference with $S$:
\[
\mathcal{L}\left( S \Delta \widehat{S} \right) =\mathcal L\left( S\cap \widehat S^c  \right) +\mathcal L\left( S^c \cap \widehat S \right).
\]
We will in addition impose the following condition:

	{\bf R.} (Level set regularity). There exists constants $h_0 >0$ and
	$C_0>0$ such that, for all $h \in (0,h_0)$,
	\[
	\mathcal L\left( S_h \setminus S_{-h}\right)\le C_0h,
	\]
	where $S_{h}$ and $S_{-h}$ are defined in \eqref{eq:in and out}.

	Condition {\bf R} is very mild. Indeed, if $\partial S$ is  $C^2$, then the set $N_h := S_h \backslash S_{-h} $  is the tubular neighborhood \citep[see, e.g.][]{hirsch2012differential}
	  of $\partial S$ in $\mathbb R^d$.
	In particular, every compact domain 
	in $\mathbb R^d$ with $C^2$ boundary satisfies condition {\bf R}.  In this case
	$C_0\preceq V_{d-1} |\partial S |$, where $| \partial S|$ denotes the surface volume of $S$.  We also note that {\bf R} is equivalent to the ``smooth boundary'' condition in \cite{steinwart:15}.

\begin{proposition}\label{prop:support estimate upper}
Let $\{X_1,\ldots,X_n\}$ be an i.i.d. sample from a probability distribution $P$ that has a gap of size $\epsilon$ at level $\lambda^*$. Let $a_n=C \sqrt{ \frac{\log(n) +\log (1/ h)}{nh^d}}$  be defined as  in \eqref{eq:an}.
	Suppose the input parameters  $(h,k)$ of the DBSCAN algorithm satisfy
	\begin{equation}\label{eq:gap-constrain}
		h_0\ge h \ge C\left(\frac{\log(n) }{n\epsilon^2}\right)^{1/d} \quad
	\text{and} \quad k = \lceil n h^d,
	V_d\lambda \rceil
	\end{equation}
	for any $C>0$ such that $2 a_n < \epsilon$ and	any $\lambda$ in  $(\lambda^* -\epsilon + a_n,\lambda^*- a_n ]$.
	Then  with probability at least $1-1/n$,
	$$\mathcal
	L(S \triangle \widehat S_h) \le 2C_0h ,$$ 
	where   $C_0$ is defined in  {\bf R} and  
	\begin{equation}\label{eq:Shat}
		\widehat S_h = \bigcup_{X_j\in \mathbb G_{h,k} }B(X_j,h)
	\end{equation}
\end{proposition}

Similarly to \Cref{prop:consistency-gap},  the above results hold true for all $n$ large enough such that $a_n < \epsilon/2$.
	If $P(X\in S) =1 $, the level set estimator $\widehat S_h$ is also a support estimator
	and 
	$\epsilon= \inf_{x\in S}p(x). $
	In this case, \Cref{prop:support estimate upper} says that if the lower bound on the  density
	vanishes no faster  than $O(n^{-1/2}) $, then support estimation is still possible.

Below we show that the error bound given in \Cref{prop:support estimate upper} is minimax optimal up to log factors.  Consider
 $\mathcal P^n(h_0,\epsilon) $, the class of probability distribution of $n$ i.i.d. random vectors in $\mathbb R^d$ whose common
density exhibit  a gap of  size $\epsilon$ such that condition \eqref{eq:Sl} holds, and
satisfying condition {\bf R} with parameter $h_0 > 0$. 
Then \Cref{prop:support estimate upper} shows that, for all $n$ large enough,
$$ \sup_{P\in \mathcal P^n(h_0,\epsilon)}
\mathbb{E}_P\left (S\triangle \widehat S_h\right) = O\left(\left(\frac{\log(n)}{n\epsilon^2}\right)^{1/d} \right),$$
provided that $h$ is of the order $
\left(\frac{\log(n)}{n\epsilon^2}\right)^{1/d}$. 
Our next result provides a nearly-matching  lower bound.

%

\begin{proposition}
	\label{prop:lower gap level set}
	 There exist
	constants $h_0$ and $c$, depending only on $d$   such that   for any $\epsilon\le 1/4$, for all $n$ large enough here exist probability distributions $\{P_1,\ldots,P_M\}$ in $\mathcal P^n(h_0,\epsilon) $ such that
	$$\inf_{\widehat S} \sup_{i=1,\ldots,M} \mathbb{E}_{P_i}\left(\mathcal L(\widehat S \triangle S )
	\right)\ge c \min\left\{\left(\frac{1}{n\epsilon^2}\right)^{1/d}, 1\right\} ,$$
	where the infimum is with respect to all estimators of $S$. 

\end{proposition}

Thus, if $\frac{\log (n)}{n\epsilon^2} \to 0 $ as $n\to \infty$ (so that  condition \eqref{eq:gap-constrain} is eventually satisfied),  then the bounds given in 
\Cref{prop:support estimate upper}  and \Cref{prop:lower gap level set} match, up to a  $\log(n)$ factor. That is, with suitable choice of input, DBSCAN can optimally estimate the level set $S$ at the gap.

	The performance of the Devroye-Wise estimator is a well-established topic in the literature: see, e.g., Theorem 4 and 5 in  \cite{cuevas2004boundary}. Our contribution in this regard is two fold: we allow for an explicit dependence on the gap size parameter $\epsilon$  and deliver minimax lower bounds. Our rate of convergence
	confirms the intuition that a smaller gap size leads to a harder estimation problem. 

\section{Discussion}
\label{section:discussion}

In this article we propose a new notion of consistency for estimating the clustering structure
under various conditions.  Our analysis shows that   the DBSCAN algorithm
  is minimax optimal.  Interestingly, the rates match, up to
log terms, 
minimax rates for density estimation in the supreme norm for H\"{o}loder smooth
densities. In particular, our
results provide a complete, rigorous justification to the plausible belief,
commonly held in
density-based clustering, that clustering is as difficult as density estimation.
In the rest of the discussion section, we will compare  our notion of separation  with other existing  ones in the literature. For the sake of exposition, we will follow the convention used in much of the literature on density-based clustering of assuming that the cluster tree of the data generating distribution in fact corresponds to the hierarchy of the upper level sets of a canonical density $p$. As explained in \cite{steinwart:15}, this definition is in general not well-posed, since different densities will yield different trees. 

\subsection{Hartigan consistency in \cite{hartigan1981consistency}}\label{section:hartigan}
We follow \cite{chaudhuri2010rates} and \cite{eldridge2015beyond}  in  defining Hartigan consistency in terms of the density cluster tree.
\begin{definition}[Hartigan consistency]
 Let $\widehat T_n$ be a cluster tree estimator constructed from i.i.d. data $\{X_i\}_{i=1}^n$ from a disribution $P$ with Lebesgue density $p$. For any pair of subsets 
 $A$ and  $A'$, let denote $A_n$ and  $A_n'$  be the smallest clusters of $\widehat T_n$ containing $A\cap  \{X_i\}_{i=1}^n$ and $A'\cap  \{X_i\}_{i=1}^n$, respectively. The cluster tree estimator $\widehat T_n$ is Hartigan consistent if, for any pair of sets
$A $ and $A'$ belonging to distinct connected components of $\{ x: p(x) \ge \lambda\} $ for some $\lambda$,
$  P (A_n \cap A_n' = \emptyset ) \to 1 \text{ as } n \to \infty$.
\end{definition}
It is immediate from \Cref{defi:delta-consistent} that a $\delta$-consistent cluster tree is also Hartigan consistent. While Hartigan consistency is a simple form of {\it point-wise} cluster tree consistency, which holds for each fixed pairs of disjoint clusters, $\delta$-consistency is a stronger guarantee, as it yields {\it uniform} consistency over all $\delta$-separated clusters and, furthermore, gives consistency rates depending on the value of the separation parameter $\delta$.

\subsection{Comparison with the Merge distortion metric }
The notion of $\delta$-separation is closely related to the notion of {\it merge distance}  introduced by \cite{eldridge2015beyond}, which we present next. 
\begin{definition}\label{defi:merge height}
Let $p$ and $q$ be Lebesgue densities in $\mathbb{R}^d$ and let $T_p$ and $T_q$ be the corresponding cluster density trees. The merge distortion distance between $T_p$ and $T_q$
is defined as
	\[
d_M (T_p, T_q) = \sup_{x,y \in \mathbb{R}^d} | m_p(x,y) -m_q(x,y)|,
	\]
	where, for a Lebesgue density $p$, 
	$$ m_p (x, y)=\sup\{ \lambda > 0 \in \mathbb R: \text{there exists $ C \in T_p(\lambda)$ such that } \{x,y\} \subset C \}.$$
	\end{definition}
The original definition of merge distortion metric is, in fact, more general but, when specialized to our settings,  reduces to the one given above.

The merge distortion distance is closely related to the $L_\infty$ distance between densities. In fact, by Theorem 17 in \cite{eldridge2015beyond}, $d_M (T_p, T_q) \leq \| p - q\|_\infty $, so that, if $\{ p_n \}_n$ is a sequence of Lebesgue densities, then $\|p_n - p\| \rightarrow 0$ implies that  
$d_M (T_{p_n}, T_p) \rightarrow 0$. In fact, Lemma 1 in Appendix F of \cite{inference.tree} shows that, if $p$ and $q$ are continuous, then $d_M (T_p, T_q) = \| p - q\|_\infty$. As a result, for the class of continuous (and, in particular, H\"{o}lder smooth) densities, cluster consistency in the merge distortion distance  is equivalent to cluster consistency based on the $\delta$-separation criterion, which in turn is equivalent to estimation consistency of the underlying density in the $L_\infty$ norm.

The above statement immediately applies to the n\"{a}ive cluster tree estimator $T_{\widehat{p}_{h_n}}$ built using the level sets of any density estimator $\widehat{p}_{h_n}$ that is continuous and, as $h_n \rightarrow \infty$, consistent in the $L_\infty$ norm. Such estimator is of course computationally unfeasible even in low dimension. In fact, the DBSCAN-based procedures described above in \Cref{algorithm:DBSCAN,algorithm:MDBSCAN}, which are applicable in high-dimensional settings, are also consistent in the merge-distortion metric. To see this, and following the arguments of \cite{eldridge2015beyond}, it is sufficient to demonstrate the properties of minimality and separation, as defined in that reference, for the DBSCAN cluster-tree estimators. The separation property, which prevents the emergence of false clusters or over-segmentation, follows directly from the definition of $\delta$-separation; see also the pruning results of \Cref{section:consistent split level}. On the other hand, minimality avoids the occurrence of improper nesting and holds for the DBSCAN procedures we consider here in virtue of \Cref{lem:obvious} and \Cref{lemma:standard-level-inequality}. The fact that our algorithms produce cluster tree estimators that are consistent in the merge distance should not be  surprising, since $\delta$-consistency is  directly tied to consistency for density estimation in in $L_\infty$. As shown in \cite{eldridge2015beyond}, the robust single-linkage clustering algorithm of \cite{chaudhuri2014consistent} is also   consistent in the merge distance.

\subsection{Comparison with the $(\epsilon,\sigma)$-separation criterion of
	\cite{chaudhuri2014consistent} }
\label{subsection:Comparison with chaudhuri}
The criterion of $\delta$-separation we introduce in this paper is most useful when studying smooth densities. Nonetheless, it will be helpful to compare it to the
notion of $(\epsilon,\sigma)$-separation defined in
\cite{chaudhuri2014consistent}, which is applicable to arbitrary densities.
\begin{definition}[$(\epsilon,\sigma)$-separation criterion in  \cite{chaudhuri2014consistent}]\label{defi:Dasgupta} \
	\begin{itemize}
		\item[1.]Let f be a density supported on X $\subset \mathbb R^d$. We
		say that $A ,A' \subset X$ are $(\epsilon,\sigma )$-separated if there exists $S \subset X$
		(the separator set) such that (i) any path in $X$ from $A$ to $A'$ intersects
		$S$, and (ii) $\sup_{x\in S_{\sigma}} f(x) < (1-\epsilon) \inf_{x\in A_\sigma \cup A'_{\sigma}} f(x)$.
		\item[2.] Suppose an i.i.d samples $\{X_i\}_{i=1}^n$ is given. An estimate of  the cluster tree is said to be $(\epsilon,\sigma)$ consistent if for any pair $A$ and $A'$ being $(\epsilon,\sigma)$  separated, the smallest cluster containing $A\cap \{X_i\}_{i=1}^n$ is disjoint from the smallest cluster containing $A'\cap \{X_i\}_{i=1}^n$. 
	\end{itemize}
\end{definition}
In the following result we make a straightforward connection between the $\delta$-separation and $(\epsilon,\sigma)$-separation.
\begin{lemma} 
	\label{lemma:defi-equivalence} Assume that $p \in \Sigma(L, \alpha)$ with $\alpha\le 1$  and that
	$A$ and $A'$ are $\delta$-separated. Then, $A$ and $A'$ are $(\epsilon,\sigma)$-separated   with 
	\begin{equation}
	\label{eq:relations}
	S= \{x \colon p(x)
	\le \lambda -\delta\}, \quad \epsilon =\delta/(3\lambda) \quad \text{and} \quad \sigma^\alpha=\delta/(3L),
	\end{equation}
	where $\lambda = \inf_{z\in A\cup A'} p(z)$.
\end{lemma}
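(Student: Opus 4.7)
The plan is to verify the two requirements of $(\epsilon,\sigma)$-separation directly from the hypothesis using the Hölder inequality $|p(x)-p(y)|\le L\|x-y\|^\alpha$ for $\alpha\le 1$, with the specific choices $\sigma^\alpha=\delta/(3L)$ and $\epsilon=\delta/(3\lambda)$. Set $\lambda=\inf_{z\in A\cup A'}p(z)$ and $S=\{p\le\lambda-\delta\}$.

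For condition (i), I would argue topologically. By the definition of $\delta$-separation, $A$ and $A'$ lie in distinct connected components of the open set $\{p>\lambda-\delta\}=X\setminus S$. Hence any continuous path $\gamma\subset X$ with endpoints in $A$ and $A'$ cannot be entirely contained in $X\setminus S$ (otherwise its image would be a connected subset of $X\setminus S$ meeting both components, contradicting disjointness). Therefore $\gamma$ must intersect $S$, which is exactly (i).

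For condition (ii), I would bound the two sides separately using the Hölder condition. If $x\in S_\sigma$, there exists $y\in S$ with $\|x-y\|\le \sigma$, so
\[
p(x)\le p(y)+L\sigma^\alpha \le (\lambda-\delta)+\tfrac{\delta}{3}=\lambda-\tfrac{2\delta}{3}.
\]
If $x\in A_\sigma\cup A'_\sigma$, there exists $y\in A\cup A'$ with $\|x-y\|\le\sigma$, so
\[
p(x)\ge p(y)-L\sigma^\alpha\ge \lambda-\tfrac{\delta}{3}.
\]
Thus $\sup_{S_\sigma}p\le \lambda-2\delta/3$ while $\inf_{A_\sigma\cup A'_\sigma}p\ge \lambda-\delta/3$.

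It then remains to compare these two bounds against the factor $1-\epsilon=1-\delta/(3\lambda)$. A direct computation gives
\[
(1-\epsilon)\inf_{A_\sigma\cup A'_\sigma}p\ge \Bigl(1-\tfrac{\delta}{3\lambda}\Bigr)\Bigl(\lambda-\tfrac{\delta}{3}\Bigr)=\lambda-\tfrac{2\delta}{3}+\tfrac{\delta^2}{9\lambda}>\lambda-\tfrac{2\delta}{3}\ge \sup_{S_\sigma}p,
\]
which establishes the strict inequality required by (ii). There is essentially no main obstacle: once the purely topological step (i) is spelled out, the remainder reduces to the two Hölder bounds and the arithmetic check above, and the strict inequality is ensured by the positive surplus $\delta^2/(9\lambda)$.
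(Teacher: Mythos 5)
Your proof is correct and takes essentially the same approach as the paper's: a topological argument for condition (i), followed by the two H\"{o}lder bounds on $S_\sigma$ and $A_\sigma\cup A'_\sigma$ and the arithmetic check for condition (ii). Your computation also correctly uses $\epsilon=\delta/(3\lambda)$ throughout (matching the lemma statement), whereas the paper's proof body momentarily writes $\epsilon=\delta/3$, a typo, since $\epsilon=\delta/(3\lambda)$ is what the final chain $(1-\epsilon)\lambda-\delta/3=\lambda-2\delta/3$ actually requires.
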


\begin{proof}[Proof of lemma \ref{lemma:defi-equivalence}]
	Denote $\lambda= \inf_{z\in A\cup A'} p(z).$
	Suppose for the sake of contradiction that there is a path $l$ connects $A$ and $A'$ and that $l\cap \{p\le \lambda -\delta\} =\emptyset$.
	Then by the continuity of $p$ and the compactness of $l$, there exist $\gamma>0$ such that $l\subset \{p\ge \lambda-\delta+\gamma \} $.
	Thus $A$ and $A'$ belongs to the same path connected component of $\{p\ge \lambda-\delta+\gamma\}$. 
	Since 
	$\{p\ge \lambda-\delta+\gamma\} \subset \{p>\lambda-\delta\}$, $A$ and $A'$ belongs to the same path connected component of $\{p> \lambda\}$. Since $\{p>\lambda\}$ is an open set, $A$ and $A'$ be
	belongs to the same connected component of $\{p> \lambda\}$.
	This is a contradiction. 
	\\
	\\
	Let   $\sigma^\alpha=\delta/(3L)$ and $ \epsilon=\delta/3$, then for any $x\in S_{\sigma}$, $p(x)\le \lambda-\delta+L\sigma^\alpha =\lambda-2\delta/3$.
	Similarly if $x \in A_{\sigma}\cup A'_{\sigma} $, $p(x)\ge \lambda - L\sigma^\alpha =\lambda-\delta/3$.
	Thus 
	\begin{align*}
		(1-\epsilon) \inf_{x\in A_\sigma \cup A'_{\sigma}} f(x)
		> &(1-\epsilon)\lambda-\delta/3= \lambda-2\delta/3
		>\sup_{x\in S_{\sigma}} f(x)
	\end{align*}
\end{proof}

According to the separation criterion in \Cref{defi:Dasgupta},  two
clusters can be
$(\epsilon,\sigma)$-separated for many the values of
$\epsilon$ and $\sigma$. In particular,
by taking the separator set to be larger, it is easy to produce examples of
$\delta$-separated clusters that are also $(\epsilon,\sigma)$-separated  
such that  $\delta$ is big but $\sigma$ is small. This is simply because
$\sigma $ is heavily associated with $S$. And conversely, by taking an almost
flat density function, it is possible to have a very large $\sigma$ and very small $\delta$. 

We remark that when $\alpha> 1 $, there is no obvious relationship between the parameter $(\sigma,\epsilon)$ in
\Cref{defi:Dasgupta} and $\delta$  in
\Cref{defi:delta-consistent} as that in \Cref{lemma:defi-equivalence}. For $\alpha>1$, while $p \in
\Sigma(L,\alpha)$ implies that $p$ is  Lipschitz continuous, the 
Lipschitz constant in this case does not depend  on $L$ and $\alpha$ in a simple
manner. As a result, the parameter $\sigma$,  representing the distance between
connected components of upper level sets of $p$,
is not straightforwardly related to $\delta$.

	\bibliographystyle{plainnat}
	\bibliography{citation}
\appendix
	

\section{Topological Preliminaries}
\label{section:topology}
For completeness, we review the definition of connectedness from the general topology.
\begin{definition}[  \cite{munkres2000topology} Chapter 3]
Let $U$ be any nonempty subset in $\mathbb R^d$. 
Then $U$ is said to be connected, if,
for every pair of open subsets  $A ,A' $ of $ U$ such that 
$A\cup A'=U $,    we have either $A=\emptyset $ or $A'=\emptyset$. The maximal connected subsets of $U$ are called the connected 
components of $U$.
\end{definition}
We briefly  explain  why the connected components
naturally introduce a hierarchical structure to the level sets of $p$. Let $\lambda_1>\lambda_2$,
 so we have $\{p\ge \lambda_1\} \subset \{p\ge \lambda_2\}$. 
\begin{itemize}
\item Suppose 
$A$ is any subset of $\mathbb R^d$, and $A$ belongs to the same connected component of $\{p\ge \lambda_1\} $. 
Then
$A$ is contained in the same connected component of $\{p\ge \lambda_2\} $.
\item Suppose $A \cup A' \subset \{p\ge \lambda_1\}$ and they  belong to distinct connected components of $\{p\ge \lambda_2\} $
Then  $A$ and $A'$  are not contained in the same connected component of $\{p\ge \lambda_1\} $.
\end{itemize}

We also review a closed related concepts, which is call the path connectedness in general topology.
\begin{definition}
We say that a subset $U\subset \mathbb R^d$ is path connected if for any $x,y\in U$,
there exists a path continuous $\mathcal P:[0,1]\to U$ such that $\mathcal P(0)=x $ and $\mathcal P(1)=y$.
\end{definition}
 The main reason we introduce the path connectedness is that if $U$ is an open set in $\mathbb R^d$, then
 $U$ is connected if and only if it is path connected. Therefore a simple but useful consequence is that for any $\lambda$, 
 the connected components of $\{p>\lambda\}$ are also the path connected components. 
  \\
We will repeatedly use these topological properties  in  our analysis without further mentioning.
The proofs of them  are omitted and  can be found in  \cite{munkres2000topology}  or any other 
books on general topology.

\section{Proofs from Section  \ref{section:holder}}
\label{appendixa}

We begin by justifying  \eqref{eq:variance of density}.
Since this is a well known result, we  simply use a result of
\cite{sriperumbudur2012consistency}. We will assume the  following condition 
for the kernel $K$ which is fairly standard in the non-parametric literature.

\begin{itemize}
	\item[VC.] The kernel $K: \mathbb{R}^d \rightarrow \mathbb{R}$ has bounded
	support and integrates to 1. Let $\mathcal{F}$ be the class of functions
	of the form
	\[
	z \in \mathbb{R}^d \mapsto
	K\left(x-z \right), \quad z \in \mathbb{R}^d.
	\]
	Then, $\mathcal{F}$ is a uniformly bounded VC class: 
	there exist positive constants $A$ and $v$ such that
	$$\sup_{P}\mathcal N (\mathcal F,L^2(P),\epsilon \|F \|_{L^2(P)})\le (A/\epsilon)^{v} ,$$
	where $\mathcal N(T,d,\epsilon)$ denotes the $\epsilon$-covering number of the metric space $(T,d)$, F is the envelope function of $\mathcal F$ and the 
	sup is taken over the set of all probability measures on $\mathbb R^d$. The
	constants $A$ and $v$ are called the VC characteristics of the kernel.
\end{itemize}
The assumption VC holds for a large class of kernels, including any compact supported polynomial kernel and the Gaussian kernel. 
See \cite{nolan1987u} and \cite{gine2002rates}.

\begin{proposition}[\cite{sriperumbudur2012consistency}]
	\label{prop:Steinward}
	Let $P$ be the probability measure on $\mathbb R^d$ with Lebesgue density
	bounded by $\|p\|_{\infty}$ and
	assume that the kernel $K$ belongs to $L^\infty(\mathbb R^d )\cap L^2(\mathbb
	R^d)$ satisfies the  VC assumption. Then for any $\gamma > 0$ and $h>0$, there
	exists an absolute constant $C$ depending on the VC characteristic of $K$ such
	that, with probability no smaller than $ 1- e^{-\gamma}$,
	\begin{equation}\label{eq:VC kernel} 
	\|p_h-\widehat p_h\|_{\infty}  \le 
	\frac{C}{nh^d}\left(\gamma + v\log\frac{2A}{\sqrt{h^d\|p\|_{\infty}\|K\|^2_{2}}}\right)
	+C\sqrt{\frac{2 \|p\|_{\infty}}{nh^d}}\left(\gamma\|K\|^2_{\infty} +v\|K\|^2_2\log\frac{2A}{\sqrt{h^d\|p\|_{\infty}\|K\|^2_2}}\right)  \nonumber
	\end{equation}
\end{proposition}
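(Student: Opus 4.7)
The plan is to recognize the left-hand side as the supremum of an empirical process indexed by the rescaled kernel class and then apply Talagrand's concentration inequality combined with a VC-type uniform entropy bound. Specifically, write
\[
\widehat{p}_h(x) - p_h(x) = \frac{1}{n}\sum_{i=1}^n \bigl( g_x(X_i) - \mathbb{E}\, g_x(X_1) \bigr), \quad g_x(z) := \frac{1}{h^d} K\!\left( \frac{x - z}{h}\right),
\]
and let $\mathcal{G}_h = \{g_x : x \in \mathbb{R}^d\}$. Taking the supremum over $x$ reduces the problem to bounding $\|P_n - P\|_{\mathcal{G}_h}$. The envelope of $\mathcal{G}_h$ is $G = h^{-d} \|K\|_\infty$, and since $p \leq \|p\|_\infty$, the variance of any $g_x$ satisfies
\[
\mathbb{E}\, g_x(X_1)^2 \le \frac{\|p\|_\infty}{h^d} \|K\|_2^2 =: \sigma^2.
\]
Note also that the VC property of the original class $\mathcal{F}$ transfers immediately to $\mathcal{G}_h$ (with the same characteristics $A, v$) because the map $f \mapsto h^{-d} f$ only rescales envelopes.

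First, I would control $\mathbb{E}\|P_n - P\|_{\mathcal{G}_h}$ by a standard symmetrization and uniform entropy bound. Using the VC-type covering estimate together with Dudley's entropy integral (or the maximal inequality of Giné–Guillou for uniformly bounded VC classes, see \cite{gine2002rates}), one obtains
\[
\mathbb{E}\,\|P_n - P\|_{\mathcal{G}_h} \;\le\; C\sqrt{\frac{\sigma^2 \, v \log(A G / \sigma)}{n}} \;+\; C \frac{G\, v \log(A G /\sigma)}{n}.
\]
Substituting $\sigma^2$ and $G$ and simplifying the logarithm $\log(A G/\sigma) = \log\!\bigl(A / \sqrt{h^d \|p\|_\infty \|K\|_2^2 / \|K\|_\infty^2}\bigr)$ yields the two log-terms of the form $\log\bigl(2A / \sqrt{h^d \|p\|_\infty \|K\|_2^2}\bigr)$ as in the statement (the $\|K\|_\infty^2$ factor gets absorbed into $C$, or is tracked explicitly as in the proposition).

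Second, I would upgrade the expected supremum to a high-probability bound via Talagrand's (or Bousquet's) inequality for suprema of bounded empirical processes: with probability $1 - e^{-\gamma}$,
\[
\|P_n - P\|_{\mathcal{G}_h} \;\le\; 2\,\mathbb{E}\,\|P_n - P\|_{\mathcal{G}_h} \;+\; C\sqrt{\frac{\sigma^2 \gamma}{n}} \;+\; C \frac{G \gamma}{n}.
\]
Combining the two displays and collecting the $\gamma$-terms with the $v\log(\cdot)$-terms gives an expression of the exact two-term form appearing in the proposition, where the leading term scales like $\sqrt{\|p\|_\infty/(n h^d)}\,(\gamma \|K\|_\infty^2 + v \|K\|_2^2 \log(\cdot))$ and the remainder is the $1/(n h^d)$ Bernstein-type correction.

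The main obstacle is bookkeeping the constants so that the final inequality matches the precise form stated in the proposition (in particular, the placement of $\|K\|_\infty^2$, $\|K\|_2^2$ and the exact argument of the logarithm); this requires being careful with both Talagrand's constant and the Giné–Guillou maximal inequality constant rather than absorbing everything into a single $C$. No new probabilistic ideas are needed beyond these two standard tools: the VC entropy bound for the expectation, and Talagrand's inequality for the concentration.
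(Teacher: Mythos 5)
The paper itself does not prove this statement: it is explicitly imported from \cite{sriperumbudur2012consistency} (``Since this is a well known result, we simply use a result of \cite{sriperumbudur2012consistency}''), so there is no in-paper proof to compare against. Your outline---rescale the kernel class, compute envelope $G=h^{-d}\|K\|_\infty$ and weak variance $\sigma^2 = h^{-d}\|p\|_\infty\|K\|_2^2$, bound $\mathbb{E}\|P_n-P\|_{\mathcal{G}_h}$ via the Gin\'e--Guillou maximal inequality for uniformly bounded VC classes, and upgrade to high probability with Talagrand's/Bousquet's inequality---is precisely the standard route used in the cited reference, and the two ingredients you name (VC uniform entropy for the mean, Talagrand for the tail) are both necessary and sufficient here. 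The only point worth flagging is cosmetic: the argument of the logarithm in the proposition, $\log\bigl(2A/\sqrt{h^d\|p\|_\infty\|K\|_2^2}\bigr)$, arises from $\log(AG/\sigma)$ after absorbing the factor $\|K\|_\infty$ into the constant, and the mixed placement of $\gamma$ inside and outside the square root reflects a Bernstein-type split $\sqrt{\sigma^2\gamma/n}+G\gamma/n$ that has been re-grouped; neither affects the rate, so your remark that matching the displayed form requires constant bookkeeping rather than any new idea is accurate.
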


\begin{lemma}\label{lemma:merge height}
Suppose $A'$ and $A'$ are two clusters of $T_p(\lambda) $ and they are $\delta $-separated. Then their merge height $m_f(A, A')$ satisfies
$$m_f(A, A') \le \lambda -\delta .$$

\end{lemma}
\begin{proof}
By \Cref{defi:path-delta-separated}, there $A$ and $A'$ belong distinct connected components of $\{ p> \lambda -\delta\} $.
For the sake of contradiction, suppose
$$m_f(A, A') >  \lambda -\delta.$$
Therefore there exists $\lambda'$ such that 
$m_f(A, A')  > \lambda' > \lambda -\delta $ and that   by \Cref{defi:merge height} 
$A $ and $A'$ belong the same connected component of $\{ p\ge \lambda'\}  $. This is a contradiction because 
$ \{ p\ge \lambda'\} \subset \{ p> \lambda -\delta\}$.
\end{proof}

\subsection{Proofs in \Cref{subsection:split level} }

\begin{proof}[Proof of proposition \ref{lemma:well-defined-2}]
	To show proposition \ref{lemma:well-defined-2}, we begin by introducing a standard topology lemma. 
	\begin{lemma}\label{lemma:well-defined}
		Suppose $p:\mathbb R^d \to \mathbb R$ are compactly supported. If $A$ and $A'$ are in the same connected components of $\{p\ge \lambda_i \}$ for $i=1,2,\ldots,\infty$ and that $\lambda_i\le \lambda_{i+1}$,then $A$ and $A'$ are in the same connected components of $\{p\ge \lambda_{0} \}$, where $\lambda_0=\sup_i\lambda_i$.
	\end{lemma}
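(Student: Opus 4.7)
The plan is to reduce the statement to the classical topological fact that a nested intersection of nonempty compact connected sets is itself nonempty and connected, and then verify the hypotheses of this fact in our setting.

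First I would observe that
\[
\{p \ge \lambda_0\} \;=\; \bigcap_{i=1}^{\infty} \{p \ge \lambda_i\},
\]
because $\lambda_0 = \sup_i \lambda_i$ and the sets $\{p \ge \lambda_i\}$ are decreasing in $i$ (the monotonicity $\lambda_i \le \lambda_{i+1}$ is essential here). For each $i$, let $C_i$ denote the connected component of $\{p \ge \lambda_i\}$ containing $A \cup A'$, which exists by hypothesis. Each $C_i$ is closed (as a connected component of the closed set $\{p \ge \lambda_i\}$, using continuity of $p$) and bounded (because $p$ has compact support, so all level sets with $\lambda > 0$ lie in this support), hence compact. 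It is also nonempty, since $C_i \supseteq A \cup A'$.

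Next I would check the nesting $C_1 \supseteq C_2 \supseteq \cdots$. Since $\{p \ge \lambda_{i+1}\} \subseteq \{p \ge \lambda_i\}$, the set $C_{i+1}$ is a connected subset of $\{p \ge \lambda_i\}$ that contains $A \cup A'$; by maximality of $C_i$ as the connected component of $A$ in $\{p\ge\lambda_i\}$, this forces $C_{i+1} \subseteq C_i$.

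Now I would invoke the classical result from general topology (see, e.g., \cite{munkres2000topology}) that the intersection of a nested decreasing family of nonempty compact connected Hausdorff subsets is itself nonempty, compact, and connected. Apply this to $\{C_i\}_{i\ge 1}$ to conclude that
\[
C_{\infty} \;:=\; \bigcap_{i=1}^{\infty} C_i
\]
is compact, connected, and contains $A \cup A'$. Since $C_\infty \subseteq \bigcap_i \{p \ge \lambda_i\} = \{p \ge \lambda_0\}$, it is a connected subset of $\{p \ge \lambda_0\}$ containing both $A$ and $A'$, which is exactly what we needed to show.

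I do not expect a serious obstacle: the only subtleties are (i) ensuring compactness of $C_i$, which follows from continuity of $p$ together with compact support, and (ii) invoking the correct form of the nested intersection theorem, which in $\mathbb{R}^d$ is standard. If one wanted to avoid citing the nested intersection theorem, one could instead give a direct argument: suppose $C_\infty = U \sqcup V$ for disjoint relatively open subsets with $A \subseteq U$ and $A' \subseteq V$; extend $U,V$ to disjoint open neighborhoods $\widetilde U, \widetilde V$ in $\mathbb{R}^d$ and use compactness of $C_1 \setminus (\widetilde U \cup \widetilde V)$ together with the nesting $C_i \downarrow C_\infty$ to find some $i$ with $C_i \subseteq \widetilde U \cup \widetilde V$, contradicting connectedness of $C_i$.
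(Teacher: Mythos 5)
Your proof is correct and follows essentially the same route as the paper's: form the nested sequence of compact connected components $C_i$ containing $A \cup A'$ and invoke the classical fact that a decreasing intersection of nonempty compact connected sets is connected. You supply more detail than the paper (explicit verification of compactness and nesting, plus an alternative elementary argument avoiding the nested-intersection theorem), but the underlying idea is identical.
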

	\begin{proof}[Proof of lemma \ref{lemma:well-defined}]
		Let $\mathcal C_i$ be the connected component of $\{p\ge \lambda_i \}$ that contains $A$ and $A'$. Thus $\mathcal C_i$ are compact and connected. Since $\mathcal C_{i+1}\subset \mathcal C_i$ for all $i\ge 1 $, $\bigcap_{i=1}^{\infty}\mathcal C_i$ is connected. Thus
		$A ,A' \subset \bigcap_{i}\mathcal C_i\subset \{p\ge \lambda_0 \} $.
	\end{proof}
	Consider $$\lambda^{*}=\sup\{\lambda: A , A' \ \text{belongs to the same connected components of }\{p\ge \lambda \}\}$$
	Then $\lambda_2 \le \lambda^{*} \le \lambda_1$.
	By lemma \ref{lemma:well-defined} , $A$ and $A'$ are in the same connected components of $\{p\ge \lambda^{*} \}$. Thus $\lambda_2\le\lambda^{*} <\lambda_1$.\\
	In order to show that $\lambda^*$ is split level, it suffices to show that $A$ and $A'$ are in the different connected components of 
	$\{p>\lambda^{*} \} $. Suppose for the sake of contradiction that $A$ and $A'$ are connected in $\{p>\lambda^*\}$. Then $A$ and $A'$ are path connected as $\{p>\lambda^*\}$ is open. Thus  there exist $\mathcal P$ connects $A$ and $A'$ in $\{p>\lambda^{*} \} $. Since $\mathcal P$ is compact, $p(\mathcal P)>\lambda^{*}$ implies that there exists $ a>0$ such that $\lambda^*+a<\lambda_1 $ and $\mathcal P , A, A'  \subset \{p\ge\lambda^{*}+a\}$.
	Thus $A$ and $A'$ belong to the same connected component of $\{p\ge \lambda^*+a \}$.
	This is a contradiction because by construction of $\lambda^*$, $A$ and $A'$ belongs to the different connected components of $\{p\ge \lambda^{*}+a \}$.
\end{proof}
\
\
\newline
\begin{proof}[Proof of corollary \ref{coro:delta split}]
	
	Suppose $A$ and $A'$ are $\delta$ separated with respect to $\lambda$. 
	Then $A$ and $A'$ belongs to distinct connected components of  $\{p>\lambda-\delta \}$ where $\lambda=\inf_{x\in A\cup A'} f(x) $.  
	Let $0<\epsilon\le \delta $ be given. Then since $\{p\ge \lambda-\delta+\epsilon\}\subset \{p> \lambda-\delta \} $, 
	$A$ and $A'$ belongs to distinct connected components of  $\{p\ge\lambda-\delta+\epsilon\}$.\\
	Since $\mathbb R^d=\{p\ge 0\}$ is connected, $A$  and $A'$ belongs to the same connected component of $\{p\ge 0\}$.
	By proposition \ref{lemma:well-defined-2}, 
	there exists $0\le \lambda^*<\lambda-\delta+\epsilon$ such that 
	$A$ and $A'$ in the same connected component of $\{ p\ge \lambda^*\}$ and in
	different connected components of $\{p>\lambda^*\}$.
	By taking $\epsilon\to 0$, the claimed result follows.
	
\end{proof}

\subsection{Proofs in sections \ref{section:upper bounds}}

 \begin{proof}[Proof of  \Cref{lemma:DBSCAN-level-inequality}]
 	From the proof of  \Cref{lemma:standard-level-inequality}, it can be see that 
 	$$\left\{ p\ge \lambda +C\frac{\log(n)}{n^{\alpha/(2\alpha+d)}} \right \}  \cap \{X_i\}_{i=1}^n \subset \widehat D(\lambda) \subset \left\{ p\ge \lambda -C\frac{\log(n)}{n^{\alpha/(2\alpha+d)}} \right \} .$$
 	Thus for any $y\in B(X_j,h)$ for some $X_j \in \hat D(\lambda)$, 
 	$$p(y) \ge p(X_j)-Lh^\alpha\ge \lambda- C\frac{\log(n)}{n^{\alpha/(2\alpha+d)}} -Lh^\alpha,$$
 	where the first inequality follows from $|p(y)-p(X_j) | \le Lh^\alpha$.  Therefore the above display implies 
 	$$\bigcup_{X_j\in \hat D(\lambda)}  B(X_j,h)\subset \left\{ p\ge \lambda -C\frac{\log(n)}{n^{\alpha/(2\alpha+d)}}-Lh^\alpha   \right \}.$$
 	For the other inclusion, let $x \in  \left\{ p\ge \lambda +C\frac{\log(n)}{n^{\alpha/(2\alpha+d)}}+Lh^\alpha \right \}$.
 	Then $\hat p_h(x) \ge \lambda +Lh^\alpha $. Thus $B(x,h)\cap \{X_i\}_{i=1}^n\not =\emptyset$, or else $\hat p_h(x)=0$.
 	Let $X_j \in B(x,h)$. Therefore 
 	$$p(X_j) \ge p(x) -Lh^\alpha \ge \lambda + \frac{\log(n)}{n^{\alpha/(2\alpha+d)}} .$$
 	Thus $\widehat p_h(X_j)\ge \lambda,$ which means that $X_j \in \widehat D(\lambda) $.
 	So $x\in  \bigcup_{X_j\in \hat D(\lambda)}  B(X_j,h) $ and the first inclusion follows.
 	
 \end{proof}

 \begin{proof}[Proof of \Cref{prop:MDBSCAN}] 
 	Let $\mathcal B$ be the event that 
 	$$\mathcal  B=  \{\sup_{x\in\mathbb R^d} | \widehat p_h(x) -p(x)| \le a_n\}$$
 	By \Cref{prop:Steinward}, we can choose  $a_n$ 
 	so that $P(\mathcal B) \ge 1-1/n$ and that 
 	$a_n=O\left( \left( \frac{\log(n)}{n}\right)^{\alpha/(d+2\alpha)} \right).$  All the argument will be made on the good event $\mathcal B$.\\
 	\\
 	Observe that $p$ has connected support. Therefore $\lambda =0$ is not a split level. Assume that 
 	$\lambda_0 = \min\{\lambda^*:  \lambda^* \text { is a split level of  } p \} $. Then $\lambda_0>0$.  
 	If $h=O(n^{-1/(2\alpha+d) })$,
 	for large $n$, we have $ 2a_n +(4h/c_S)^\alpha<\min\{\delta_S,\delta_0\}$. Take
 	$$\delta \ge 2a_n +(4h/c_S)^\alpha /c_S. $$
 	Let $A$ and $A'$ are  two sets being $\delta$-separated
 	and let $\lambda= \inf_{x\in A\cup A'} p(x)$. Since $A$ and $A'$
 	are in distinct connected components of $\{p> \lambda-\delta\} $,
 	by proposition \ref{lemma:well-defined-2}
 	there exists $\lambda^*$ being a split level of 
 	$p$ such that $\lambda^*\le \lambda-\delta$ and that 
 	$A$ and $A'$ belongs to distinct connected components of $\{p>\lambda^*\}. $
 	Thus $A$ and $A'$ belong to distinct connected components of $\{p>\lambda'\}, $
 	where 
 	$$
 	\lambda'=\lambda^* +2a_n+(4h/c_S)^\alpha /c_S. 
 	$$
 	Let $\{\mathcal C_{k}\}_{k=1}^K$ be the collection of connected components of $\{p>\lambda' \}$.
 	Thus we have $A\subset \mathcal C_k$ and $A'\subset \mathcal C_{k'}$ for some $k\not =k'$.
 	In order to show the smallest cluster containing $A\cap \{X_i\}_{i=1}^n$ and $A'\cap \{X_i\}_{i=1}^n$ are disjoint with high probability,
 	it suffices to show the following statement.
 		\\
 		\\ 
 		$\bullet$ Let $A$ and $A'$ be two connected subsets of $\{ p>\lambda'\}$ and belong to two distinct connected components of 
 		$\{p>\lambda^*\}$.
 		Then 
 		the smallest cluster containing
 		$A\cap\{X_i\}_{i=1}^n$ and $A'\cap \{X_i\}_{i=1}^n$
 		are disjoint with high probability. 
 	\\
 	\\
 	Note that this observation reduce the original statement which concerns with
 	generic $\delta$-separated sets to the current statement  which only concerns with one level near the split level. 
 	Since there are finitely many split levels, a simple union bound will suffice to show the $\delta$ consistency of the cluster tree returned by  \Cref{algorithm:MDBSCAN}.
 	\\ 
 	\\
 	The proof will be completed by the following two claims.
 	\\
 	\\
 	{\bf Claim 1.} If $A$ is a connected subset of $\{p>\lambda'\}$,
 	then $A\cap \{X_i\}_{i=1}^n$ is in the same connected component
 	of 
 	\begin{equation}
 	\widehat L(\lambda'-a_n) : = \bigcup_{\{ X_j :  \widehat D(\lambda'-a_n) \}} B(X_j, 2h).
 	\end{equation}
 	\begin{proof}
 		It suffices to show that 
 		\begin{equation}\label{eq:C MDBSCAN}
 		\{p>\lambda'\}\subset  \widehat L(\lambda'-a_n).
 		\end{equation}
 		Since for large $n$,
 		$$a_n +(4h/c_S)^\alpha\le \delta_0, $$
 		By {\bf C2} there exists $\mathcal N_{h}\subset \{ p>\lambda'\} $ with $card(\mathcal N_{h})\le A_c (h)^{-d}$ such 
 		that $\mathcal N_{h} $ is a $h$ cover.
 		\\
 		Since $\{p>\lambda'\}$ satisfies the inner cone condition {\bf C1},
 		$$P(B(x,h)\cap \{p>\lambda'\}) \ge\lambda^*c_IV_d h^d \ge \lambda_0c_IV_dh^d .$$
 		So there exists $c'_I$ only depending on $d$ and $c_I$ such that 
 		$$ P(\{ \{X_i\}_{i=1}^n\cap B(x,h)\cap \{p>\lambda'\} =\emptyset \}) \le (1-\lambda_0c_IV_dh^d)^n \le \exp (-c_I' \lambda_0 n^{2\alpha/(\alpha+d)}) =o(n^{-2}),$$
 		where the second inequality follows from $h=O(n^{1/(2\alpha+d)})$ and the equality follows from \\
 		$\lambda_0n^{2\alpha/(2\alpha+d)}/\log(n) \to \infty  $
 		and $n$ being large enough.
 		Consider the  event 
 		$$\mathcal A = \{ \{X_i\}_{i=1}^n\cap B(x,h)\cap \{p>\lambda'\} \not =\emptyset  \text{ for all $x\in \mathcal N_{h}$} \}.$$
 		By the union bound 
 		\begin{equation}\label{eq:MDBSCAN dependent}
 		P(\mathcal A^c) \le card (\mathcal N_{h})  \exp (-c_I' \lambda_0 n^{2\alpha/(2\alpha+d)}) = A_ch^{-d}  \exp (-c_I' \lambda_0 n^{2\alpha/(2\alpha +d)})=o(1).
 		\end{equation}
 		So for any $y\in\{p>\lambda'\}$ , there exists $x\in \mathcal N_{h}$ such that $| y-x| \le h$. Under event $\mathcal A$
 		there exists $X_j \in \{p >\lambda'\} $ such that $ |X_j-x|\le h.$ Therefore $y\in B(X_j,2h)$. Since 
 		$$ X_j \in \{X_i\}_{i=1}^n\cap \{p>\lambda'\}\subset \widehat D(\lambda'-a_n) ,$$ the claim follows.
 	\end{proof}
 	To finish the proof of the theorem, we still need to show  at level $\lambda'-a_n$
 	the data points $A\cap \{X_i\}_{i=1}^n$ and $A'\cap \{X_i\}_{i=1}^n$  are 
 	contained in distinct clusters.  Therefore the following claim finish the proof.\\
 	\\
 	{\bf Claim 2.} There exists a partition  $\{ S_i\}_{i=1}^I$ of $\widehat D (\lambda'-a_n)$ such that 
 	$A\cap \{X_i\}_{i=1}^n$ and $A'\cap\{X_i\}_{i=1}^n$ belong to distinct subsets of the partition and that 
 	data points in distinct subsets of the partition are mutually disconnected. 
 	\begin{proof}
 		Let $\{B_i\}_{i=1}^I $ be the collection of connected components 
 		of $\{p\ge  (4h/c_S)^{\alpha} +\lambda^* \}. $ 
 		Since $A$ and $A'$ belong to distinct connected components of $\{p>\lambda^*\}, $
 		and $\lambda^*< (4h/c_S)^{\alpha} +\lambda^*$,
 		$A$ and $A'$ are contained in distinct elements of $\{B_i\}_{i=1}^I $. 
 		From condition {\bf S} , 
 		\begin{equation}\label{eq:mutual distance}\min_{i\not = j} d(B_i,B_j) \ge 4h.
 		\end{equation}
 		Note that $\widehat D(\lambda'-a_n) \subset  \{p\ge (4h/c_S)^\alpha +\lambda^* \}$ as a consequence of event $\mathcal B$. 
 		Thus  $S_i=B_i \cap \widehat D(\lambda'-a_n)$ form a partition of $\widehat D(\lambda'-a_n)$.
 		Let 
 		$$L_i = \bigcup_{X_j \in S_i}  B(X_j, 2h).$$
 		By \eqref{eq:mutual distance},  $L_i\cap L_j=\emptyset $ if $i\not = j$.  This shows that data points in distinct subsets of the partition $\{S_i\}_{i=1}^I$ are mutually disconnected at the graph $\mathbb C(h,\lambda'-a_n)$.
 	\end{proof}
 \end{proof}

 \begin{proof}[Proof of  \Cref{lemma:properties of morse}]
 	\
 	\\
 	{\bf Step 1.} In this step we show that condition {\bf S(2)} holds. 
 	Consider an arbitrary split level $\lambda$, and two connected components $C_1$, $C_2$. If 
 	\begin{equation*}
 	\inf_{\delta>0} d(C_1\cap \{p\ge \lambda+\delta\}, C_2\cap \{p\ge \lambda+\delta\})>0
 	\end{equation*}
 	then we have $d(C_1\cap \{p\ge \lambda\}, C_2\cap \{p\ge \lambda\})>0$, and the thesis is trivial. Thus
 	assume that
 	\begin{equation*}
 	\inf_{\delta>0} d(C_1\cap \{p\ge \lambda+\delta\}, C_2\cap \{p\ge \lambda+\delta\})=0,
 	\end{equation*}
 	i.e. 
 	\begin{equation*}
 	\lim_{\delta\to0} d(C_1\cap \{p\ge \lambda+\delta\}, C_2\cap \{p\ge \lambda+\delta\})=0.
 	\end{equation*}
 	Thus there exists $y_0\in \{p=\lambda\}$, and points $y_{1,2}^\delta\in C_{1,2}\cap \{p\ge \lambda+\delta\}$ such that  
 	\begin{equation*}
 	y_{1,2}^\delta\overset{\delta\to 0}\to y_0,\qquad |y_1^\delta-y_2^\delta|=d(C_1\cap \{p\ge \lambda+\delta\}, C_2\cap \{p\ge \lambda+\delta\}).
 	\end{equation*}
 	It is straightforward to check that $p(y_1^\delta)=p(y_2^\delta)=\lambda+\delta$.
 	
 	The thesis is now rewritten as $|y_1^\delta-y_2^\delta|\ge c_S\delta^{1/2}$ for some constant $c_S>0$ and all
 	sufficiently small $\delta$. Since split levels are also critical, $\nabla p(y_0)=0$; since
 	$p$ is a Morse function, $\nabla^2 p(y_0)$ is non-degenerate. By Taylor formula we have
 	\begin{equation}
 	\delta=p(y^\delta_j)-p(y_0) = (y^\delta_j-y_0)^T \nabla^2 p(y_0)(y^\delta_j-y_0)/2 +O(|y^\delta_j-y_0|^3),\qquad j=1,2,
 	\label{dist}
 	\end{equation}
 	and, as $\nabla^2 p(y_0)$ is non-degenerate, it follows $|y^\delta_j-y_0|=O(\delta^{1/2})$, i.e.
 	there exist constants $c_1,c_2,\delta_0>0$ such that 
 	$$c_1\delta^{1/2}\le |y^\delta_j-y_0|\le c_2\delta^{1/2} \qquad \text{for all } \delta\in (0,\delta_0).$$
 	We can estimate $c_2$ from below: denoting by 
 	\begin{equation*}
 	a:= \max\{|e_1(y_0)|,|e_2(y_0)|\},\qquad e_1(y_0),e_2(y_0) = \text{eigenvalues of } \nabla^2p(y_0),
 	\end{equation*}
 	\eqref{dist} gives
 	\begin{equation*}
 	(y^\delta_j-y_0)^T \nabla^2 p(y_0)(y^\delta_j-y_0)\le ac^2_2|y^\delta_j-y_0|^2, 
 	\end{equation*}
 	hence $c_2\ge \sqrt{2/a}$.
 	By the Lipschitz regularity of the gradient, i.e. hypothesis
 	\begin{equation*}
 	|\nabla p(x)-\nabla p(y)|\le L|x-y|
 	\end{equation*}
 	for some $L>0$, we have
 	\begin{equation}
 	|\nabla p(y_1^\delta)-\nabla p(y_0)|=|\nabla p(y_1^\delta)|\le L|y^\delta_1-y_0|\le Lc_2\delta^{1/2}.
 	\label{gr}
 	\end{equation}
 	Consider now the segment $[y_1^\delta,y_2^\delta]$ between $y_1^\delta$ and $y_2^\delta$: since
 	$y_j^\delta\in C_j\cap \{p\ge \lambda+\delta\}$ ($j=1,2$), and $C_j\cap \{p\ge \lambda+\delta\}$
 	are disconnected for all $\delta>0$, there exists some point $z\in [y_1^\delta,y_2^\delta]$ such that $p(z)<\lambda+\delta/2$. 
 	By Taylor's formula we then have
 	\begin{equation*}
 	p(z)=p(y_1^\delta)+\nabla p(y_1^\delta) \cdot (z-y_1^\delta) + (z-y_1^\delta)^T\nabla^2 p(y_1^\delta) (z-y_1^\delta)/2
 	+O(|z-y_1^\delta|^3)
 	\end{equation*}
 	If inequality $|y_1^\delta-y_2^\delta|\le k\delta^{1/2}$ were to holds
 	for some $k>0$, then since the domain is compact and $\nabla^2 p\in C^2$,
 	denoting by 
 	\begin{equation*}
 	A:=\sup_x \Big(\max\{|e_1(x)|,|e_2(x)|\}\Big),\qquad e_1(x),e_2(x) = \text{eigenvalues of } \nabla^2p(x),
 	\end{equation*}
 	we have
 	\begin{align*}
 	|p(z)-p(y_1^\delta)| &\le |\nabla p(y_1^\delta)|\cdot |z-y_1^\delta| + |\nabla^2 p(y_1^\delta)|\cdot |z-y_1^\delta|^2/2\\
 	&\le|\nabla p(y_1^\delta)|\cdot |y_1^\delta-y_2^\delta| + |\nabla^2 p(y_1^\delta)|\cdot |y_1^\delta-y_2^\delta|^2/2
 	\overset{\eqref{gr}}\le (Lkc_2+k^2A/2)\delta.
 	\end{align*}
 	Since $p(y_1^\delta)=\lambda+\delta$, and $p(z)<\lambda+\delta/2$, we need $Lkc_2+k^2A/2 >1/2$, hence
 	\begin{equation*}
 	k\ge A^{-1}(\sqrt{L^2c_2^2+A}-Lc2),
 	\end{equation*}
 	i.e. 
 	\begin{align*}
 	|y_1^\delta-y_2^\delta|&=d(C_1\cap \{p\ge \lambda+\delta\}, C_2\cap \{p\ge \lambda+\delta\})\\
 	&\ge A^{-1}(\sqrt{L^2c_2^2+A}-Lc_2)\delta^{1/2} \ge A^{-1}(\sqrt{2L^2/a+A}-L\sqrt{2/a})\delta^{1/2}.
 	\end{align*}
 	\
 	\\
 	\\
 	{\bf Step 2.} In this step we show that condition {\bf C} holds. 
 	\\
 	\\
 	{\bf Proof of C1.} Since a Morse function has only isolated non degenerate critical points, and
 	an isolated set in a compact domain is also finite, we infer that $\nabla p(x)=0$ only
 	for finitely many $x$. In particular, since $\lambda^*$ are split levels,
 	and $\{p=\lambda^*\}$ contains a critical point, there exist sufficiently small $\delta_1$, $\delta_2>0$
 	such that $\{\lambda^*+\delta_1\le p\le \lambda^*+\delta_2\}$ contains no critical points (since there are only finitely
 	many critical points). Since
 	the level sets are orthogonal to the gradient, we infer that $\{p= \lambda^*+\delta_1\}$
 	is smooth. In particular, $\{p= \lambda^*+\delta_1\}$ it satisfies the inner cone property with $c_I=1/2$.
 	
 	\bigskip
 	
 	The key difficulty in extending the above argument to $\{p>\lambda^*\}$ (instead of just $\{p\ge \lambda^*+\delta_1\}$
 	with $\delta_1>0$) is that the norm of gradient $|\nabla p|$ can approach zero as $\delta_1\to 0$,
 	since $\{p=\lambda^*\}$ is a split level, hence it contains critical points.
 	
 	The Morse function requirement, however, gives the ``bare minimum'' regularity to ensure C1.
 	We aim to prove, by contradiction, that $\{p\ge\lambda^*\}$ also satisfies C1, i.e.
 	the boundary $\{p=\lambda^*\}$ does not exhibit cusps. If 
 	a cusp were to appear, then there exist arc-length parameterized curves $\gamma_{j}:[0,\epsilon]\longrightarrow 
 	\Omega$, $j=1,2$, 
 	such that $x_0=\gamma_1(0)=\gamma_2(0)$ and the angle $\angle \gamma_1(s)x_0\gamma_2(s) \to 0$ as $s\to 0$.
 	
 	\begin{figure}[ht]
 		\begin{center}
 			\includegraphics[scale=1.2]{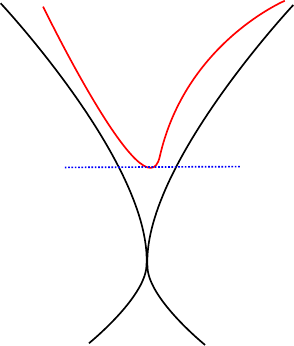}
 			\put(-90,95){$y_\delta$}\put(-100,55){$x_0$}
 			\put(-110,95){$p_\delta$}\put(-70,95){$q_\delta$}
 			\caption{Construction in the proof of Morse function case.}
 			\label{p}
 		\end{center}
 	\end{figure}
 	
 	Consider a level set $\{p=\lambda^*+\delta\}$, for small $\delta>0$. Let $y_\delta\in \{p=\lambda^*+\delta\}$
 	be the point on $\{p=\lambda^*+\delta\}$ closest to $x_0$, i.e. $|x_0-y_\delta|=\min_{y\in \{p=\lambda^*+\delta\}}|x_0-y|$,
 	and we proved that $|x_0-y_\delta|=O(\sqrt\delta)$. Let $p_\delta$, $q_\delta$ be the intersection between
 	$\{p=\lambda^*\}$ and the tangent line to $\{p=\lambda^*+\delta\}$ through $y_\delta$.
 	Clearly, as $\{p=\lambda^*\}$ has a cusp at $x_0$, we get $\lim_{\delta\to 0}\angle p_\delta x_0 q_\delta=0$.
 	Thus 
 	$$d(y_\delta,\{p=\lambda^*\})\le |p_\delta - y_\delta|=o(\sqrt\delta). $$
 	Since $\nabla p(x_0)=0$, and the gradient $\nabla p$ is $L$-Lipschitz continuous for some constant $L$, we infer
 	$|\nabla p(y)|\le A\sqrt\delta$ for some $A>0$ and all $y$ on the segment $[p_\delta,y_\delta]$.
 	Thus it follows
 	\begin{equation*}
 	\delta=|p(p_\delta)-p(y_\delta)|\le AL\sqrt\delta|p_\delta - y_\delta|=o(\delta).
 	\end{equation*}
 	This is a contradiction.
 	\\
 	\\
 	{\bf Proof of C2.} Let $U=\{p\ge \lambda^*+\delta\}$. Fix an arbitrary $r$. Clearly $U\subseteq \bigcup_{x\in U}B(x,r/3)$.
 	Since $U=\{p\ge \lambda^*+\delta\}$ is closed, and the domain $\Omega$ is compact, we infer $U=\{p\ge \lambda^*+\delta\}$
 	is also compact. Thus we can extract a covering $U\subseteq \bigcup_{i=1}^{C_r}B(x_{i},r/3)$ with finitely many balls. By Vitali
 	covering lemma, we can further extract mutually disjoint balls $B(x_{i_j},r/3)$ such that
 	$$U\subseteq \bigcup_{j=1}^{C_r'}B(x_{i_j},r)$$
 	Since $B(x_{i_j},r/3) \subset \Omega_{r/3}  $, and $\{B(x_{i_j},r/3)\}_{j=1}^{C_r'}$ are pairwise disjoint,
 	we have
 	$$V_dC_r' (r/3)^{-d}\le \mathcal L^d (\Omega_{r/3}).$$ 
 	Thus we can choose $\mathcal{N}_r=\{x_{i_j}\}$, $j=1,\cdots,C_r'$.

  \end{proof}

\subsubsection{Proofs in \Cref{subsection:flat-lowerbound}}

\begin{proof}[Proof of lemma \ref{lemma:Holder-lower-bound}]
	Let  $\lambda>0$ be given. Later in the proof, it can be seen that
	$\lambda =C_0 $, being the common upper bound of $f_i\in F$. 
	Define  $a>0$ to be such that 
	\begin{equation}\label{eq:size-of-domain}
		56\lambda \cdot 8^{d-1}a^d=1.
	\end{equation} 
	Consider 
	\begin{equation}
		\label{eq:base-function}
		f(x)=\begin{cases}\lambda  ,\ \ x\in[0, 56a]\times [0,8a]^{d-1}=\Omega\\
			0, \ \ \text{otherwise.}
		\end{cases}
	\end{equation}
	Let $b=\left(\frac{\log(32)}{n\lambda V_d}\right)^{1/d}$.
	For $0<\alpha < 1$, define
	$$g(r)=\begin{cases}
	0, \ \ 0\le r\le b\\
	\K  \left(\dk-|r-b-\dk |\right)^{\alpha}, \ \   |r-b-\dk|\le\dk\\ 
	0, \ \  \text{otherwise},\\
	\end{cases} $$
	and for $\alpha\ge 1$,define
	$$g(r)=\begin{cases}
	0, \ \ 0\le r\le b\\
	2^{1-\alpha}\delta-\K|r-b-\dk|^{\alpha}, \ \  0\le |r-b-\dk|\le\frac{1}{2}\dk\\ 
	\left(\delta^{1/\alpha} -\K^{1/\alpha}|r-b-\dk|\right)^{\alpha}, \ \frac{1}{2}\dk \le |r-b-\dk|\le \dk\\ 
	0, \ \  \text{otherwise},\\
	\end{cases} $$
	where $\K\le 1$ is chosen so that $g\in\Sigma (L,\alpha)$. By construction  $0\le g(r)\le  \delta$. \\
	\begin{figure}[ht]
		\center
		\includegraphics[width=\columnwidth]{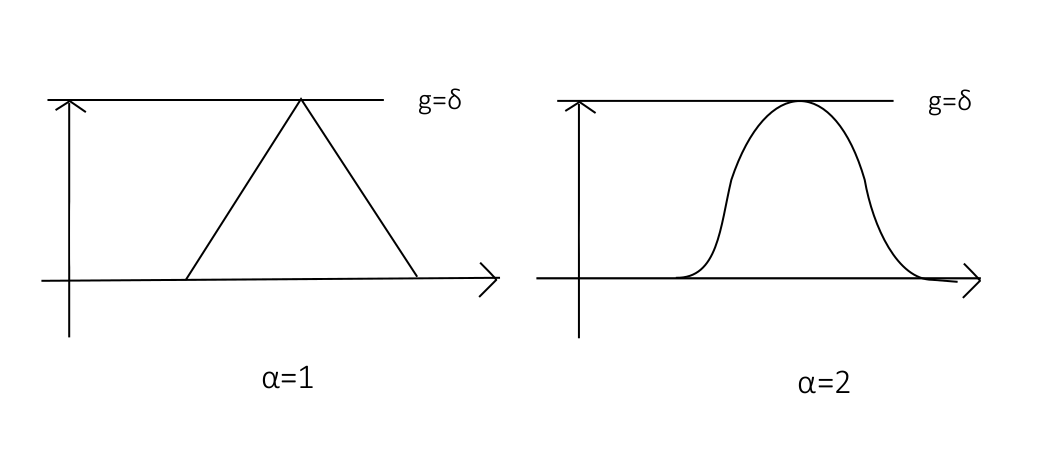}
		\caption{The radial function $g$ for $\alpha=1 ,2$. }
		\label{fig:galpha}
	\end{figure}
	Consider the inequality 
	\begin{equation}
		\begin{aligned}
			b+2\dddelta & =\left(\frac{\log(32)}{n\lambda V_d}\right)^{1/d} + 2\dddelta \\
			&\le \left(\frac{1}{4^d 8\lambda }\right)^{1/d} +2\left(\frac{1}{16(7\lambda)^{1/d}}\right) \\
			& \le \frac{1}{4} \left(\frac{1}{7}\right)^{1/d}\lambda^{-1/d} +\frac{1}{8}\left(\frac{1}{7}\right)^{1/d}\lambda^{-1/d}= 3a
		\end{aligned}
	\end{equation} 
	where the first inequality follows from $ n\ge 4^d\frac{8\log(32)}{V_d}$ and  $\delta\le  \left( \frac{\K}{16^{\alpha}(7\lambda)^{\alpha/d} }\right)$. 
	So by construction, $9$ disjoint ball  $\left\{B\left(x_i,b+2\dddelta\right)\right\}_{i=0}^8 $ can be placed in
	$\Omega$.
	For $i=1,\ldots,8$, let $f_i= f(x)-g(|x-x_i|) +g(|x-x_0|)$. Thus $f_i\in \Sigma(L,\alpha)$ for $,i=1,\ldots,8$ and the common upper bound of $f_i$ is  $\|f_i\|_{\infty}= \lambda$.
	Since $\int f =1$, by symmetry each $f_i$ also integrates to 1.
	The fact that $f_i\ge 0$ follows from  $0\le g(r)\le \delta \le p_{\max}$.  
	Since for any $1\le i,j\le 8$ ,  
	$f_j(x) = \lambda$ for any  $x\in B(x_i,b)$.
	\begin{equation*}
		\begin{aligned}
			P(\text{There exists a point in } B(x_i,b) \text{ for any } i )
			&\ge 1-(1-\lambda V_db^d)^n\\
			&\ge 1-\exp(-V_db^d\lambda n)= 1-1/32
		\end{aligned}
	\end{equation*}
	where  $b=\left(\frac{\log(32)}{n\lambda V_d}\right)^{1/d}$ is used in the last equality.
	Thus $$P(\text{There exists a point in every \ $B(x_i,b)$\  for } \ i =1 \ldots 8) \ge 3/4.$$ 
	\\
	\\
	Suppose the family $F=\{f_i \}_{i=1}^8 $ is given ahead. One wants to show  that any  algorithm being $\delta$ consistent with probability $3/4$ can identify $f_i$  with probability at least $1/2$.
	To begin consider $B_i=\{f_i\ge \lambda\}$. $B_i$ has exactly two connected components and one is $B(x_i,b)$. Denote the other connected component of $B_i$ by $V_i $. Thus $B_i=V_i\cup B(x_i,b)$, where $V_i \cap B(x_i,b)=\emptyset$. 
	Define the three events $\mathcal E_1$, $\mathcal E_1$ and $\mathcal E_3$ as following
	\begin{equation}
		\begin{aligned}
			\label{eq:events}
			&\mathcal E_1 =\{ \text{There exists a point in every \ $B(x_i,b)$\  for } \ i =1 \ldots 8 \} \\
			&\mathcal E_2 =\{ \text{The algorithm is  $(\delta, \epsilon)$ consistent  }\} \\
			&\mathcal E_3 =\{ \text{The algorithm can indentify the true density} \} 
		\end{aligned}
	\end{equation}
	Then one has $\mathcal E_1 \cap \mathcal E_2 \subset \mathcal E_3 $. This is 
	because if an algorithm is $\delta,$ consistent  and every $B(x_i,b)$ contains at least one point ,
	the algorithm will assign points in $\cup_{j\not =i} B(x_j,b)$ and points $B(x_i,b)$ into different clusters before joining them into the same cluster. 
	In this way, the algorithm can identify the true density. Since 
	$ P(\mathcal E_1)\ge 3/4 $ and $ P(\mathcal E_2)\ge 3/4 $,  $ P(\mathcal E_3)\ge 1/2 $
	\\
	It remains to compute the KL divergent between $f_1$ and $f_2$  and apply Fano's lemma.
	Using spherical coordinate centering at $x_1$ and $x_2$, the KL divergent is given by 

		\begin{align*}
			\text{KL}(f_1,f_2) &=dV_d\int _{b}^{b+2\dk}(\lambda) \log\left(\frac{\lambda}{\lambda -g(r)}\right)r^{d-1}+ (\lambda -g(r)) \log\left(\frac{\lambda-g(r)}{\lambda }\right)r^{d-1} dr\\
			&= dV_d \int _{b}^{b+2\dk}g(r) \log\left( \frac{\lambda }{\lambda -g(r)}\right)r^{d-1} dr\\
			&=  dV_d\int _{b}^{b+2\dk} g(r) \log\left( 1+\frac{g(r) }{\lambda -g(r)}\right)r^{d-1} dr\\
			&\le dV_d \int _{b}^{b+2\dk} g(r)\frac{g(r)}{\lambda-g(r)}r^{d-1} dr\le dV_d \int _{b}^{b+2\dk }g(r)\frac{g(r)}{\lambda}r^{d-1} dr\\
			&\le d\lambda^{-1}\delta^2V_ d \int _{b}^{b+2\dk }r^{d-1} dr\\
			&\le \frac{d\delta^2V_ d}{\lambda d} \left(b+2\dk\right)^{d} \\
		\end{align*}

	Thus by Fano's lemma 
	\begin{equation}
		\begin{aligned}
			n\ge \frac{(1/2)\log_2(8) -1}{\text{KL}(f_1,f_2)} = \frac{1}{2\text{KL}(f_1,f_2)}
		\end{aligned}
	\end{equation}
	and this implies 
	\begin{equation}\label{eq:312-1}
		\begin{aligned}
			\left(\frac{\lambda }{2\delta^2V_dn}\right)^{1/d}
			\le 2\dddelta+\left(\frac{\log(32)}{n\lambda V_d}\right)^{1/d}.
		\end{aligned}
	\end{equation}
	Since $\delta\le \lambda/(2^{d/2 +1})$, this gives 
	\begin{equation}\label{eq:313-1}
		\begin{aligned}
			\frac{\lambda }{2^{d+1}\delta^2 V_dn }\ge \frac{\log(32)}{n\lambda V_d}.
		\end{aligned}
	\end{equation}
	Combines equation (\ref{eq:312-1}) and equation (\ref{eq:313-1}) one has
	\begin{equation}
		\begin{aligned}
			2\dk&\ge 
			\left(\frac{\lambda }{2\delta^2V_dn}\right)^{1/d}(1-\frac{1}{2})
		\end{aligned}
	\end{equation}
	This gives 
	\begin{equation}
		\begin{aligned}
			n\ge \frac{\lambda \K^{d/\alpha}}{C(d)\delta^{2+d/\alpha}},
		\end{aligned}
	\end{equation}
	where $C(d) =2^{2d+1}V_d$. 
	\\
	\\
	To justify that the collection of functions $\{f_i\}_{i=1}^8$ constructed in the previous proof satisfies condition {\bf C} and {\bf S$(\alpha)$}, observe that $\lambda $ is the only split level of $f_i$ for all $1\le i\le 8$. The case of  $\alpha>1 $ is  only provided as the case of $\alpha<1$ is simpler. Straight forward computations shows that 
	for any  $t\le 2^{-\alpha}$,
	$ \{x: f_i(x)  \ge \lambda+t\}$ has two connected components:  $ B \left(x_i, b_0 +\frac{1}{2} \left( \frac{\delta}{\mathcal K}\right)^{1/\alpha} -\left( \frac{t}{\mathcal K}\right)^{1/\alpha}  \right) $ and $  \left( B \left(x_i, b_0 +\frac{1}{2} \left( \frac{\delta}{\mathcal K}\right)^{1/\alpha}  \right) \right)^c   \cap \Omega.  $
	Therefore condition  {\bf C} and {\bf S$(\alpha)$} are trivially satisfied. 
\end{proof}

\subsection{Proofs in \Cref{section:consistent split level}}

\begin{proof}[Poof proposition  \ref{prop:find split level}] 
	By \eqref{eq:KDE risk}, with probability at least $\gamma$, 
	we have   
	$$
	\|\widehat p_h-p \|_{\infty}\le a_n.
	$$
	{\bf Step 1.} In this step, we show that for any split level $\lambda^*$ satisfying 
	\eqref{eq:split regular}, there exists $\hatsplit $ being 
	$\Delta$-significant and that 
	$$|\hatsplit -\lambda^* |\le \Delta$$
	for large $n$.
	Let $\C$ and $\C'$ be two sets split at $\lambda^*$.
	Thus there exists $\mathcal B$ being the connected component of $\{p\ge \lambda^*\}$ containing both $\C$ and $\C'$. \\
	By \eqref{eq:split regular}, for large n,
	neither  $\{X_i\}_{i=1}^n\cap \C\cap \{p\ge \lambda^*+2\Delta\}$ nor 
	$\{X_i\}_{i=1}^n\cap \C'\cap \{p\ge \lambda^*+2\Delta\}$ is empty.
	Let $X_i\in \C\cap \{p\ge \lambda^*+2\Delta\}$ and $X_j\in\C'\cap \{p\ge \lambda^*+2\Delta\}$.
	\\
	\\
	$\bullet$ By the same argument  that gives 
	\eqref{eq:C MDBSCAN}	\begin{equation}\label{eq:consistency of split level}
	\{p\ge \lambda^*\}\subset  \widehat L( \lambda^* -a_n) : = \bigcup_{\{ X_j :  \widehat D( \lambda^* -a_n) \}} B(X_j, 2h).
	\end{equation}
	Since $\mathcal B\subset\{p\ge \lambda^*\}$ and that $\mathcal B$ is connected, $X_i$ and $X_j$ have the same label in $ \mathbb C( h, \lambda^*-a_n)$. 
	\\
	\\
	$\bullet$  Since $X_i \in \mathcal C, X_j \in \mathcal C'$ and that $\mathcal C$ and $\mathcal C' $
	are split exactly at $\lambda^*$, $X_i, X_j$ are   contained in the distinct connected components of $\{ p\ge \lambda^* +\Delta\} $. 
	By {\bf Claim 2} in the proof of \Cref{prop:MDBSCAN}, $X_i$ and $X_j$ belong to distinct connected components of $ \mathbb C(h, \lambda^* +\Delta -a_n). $
	Let
	$\hatsplit$ be defined as in \eqref{eq:find-split-hat}.
	By the above two bullet points, 
	$$\lambda^*-a_n \le \hatsplit \le \lambda^*+\Delta-a_n.$$ 
	The fact that  $\hatsplit $ is $\Delta$-significant follows from the observation that 
	$$ X_i,X_j\in  \mathbb C (h, \lambda^* +2\Delta-a_n).
	$$
	\
	\\
	{\bf Step 2.} In this step, we show that if $\hatsplit$ is a $\Delta$-significant level of the cluster tree constructed using  modified DBSCAN, then there exists 
	$ \lambda^*$ being a split level of $p$ such that
	$$|\hatsplit-\lambda^* |\le \Delta .$$
	So suppose  $X_i$ ,$X_j$ and $\hatsplit $ satisfies \eqref{eq:find-split-hat} and that 
	$X_i,X_j\in \mathbb C (h, \widehat {\lambda^*}  +\Delta)$.
	Let
	$$\lambda^* :=  \sup\{\lambda\ge 0: X_i \text{ and } X_j \text{ are in the same
		connected component of }
	\{ p\ge \lambda\} \}.    $$
	$\bullet$ By  \eqref{eq:consistency of split level}, $X_i$ and $X_j$ have the same label in $ \mathbb C( h, \lambda^*-a_n)$.  Therefore, 
	$$ \lambda^* -a_n \le \hatsplit. $$
	\
	\\
	$\bullet$  For the sake of contradiction, suppose that 
	$$ \hatsplit > \lambda^* +\Delta. $$
	Then by {\bf Claim 2} in the proof of \Cref{prop:MDBSCAN}, $X_i$ and $X_j$ belong to distinct connected components of $ \mathbb C(h, \lambda^* +\Delta -a_n). $
	By definition of $\hatsplit$, this implies 
	$$ \lambda^* +\Delta-a_n \ge \hatsplit ,$$
	which is a contradiction.
	This finishes the proof.

\end{proof}

\subsection{A Side result: consistency of the KDE tree}
\label{section:KDE}
As a side result, we also compute the upper bound of cluster tree estimators generated by kernel density estimators. 
We acknowledge that KDE clustering algorithms  have been studied by many authors including 
\cite{rinaldo2010generalized}, 
\cite{rigollet2009optimal} and \cite{inference.tree}.   For completeness, we provide  $\delta$-consistency results for KDE cluster tree returned by \Cref{algorithm:clustering cc}, but we do not claim the novelty of these results.

\begin{algorithm}[!ht]
	\begin{algorithmic}
		\INPUT i.i.d sample $\{X_i\}_{i=1}^n$,  the kernel $K: \mathbb R^d \to \mathbb R$, the level $\lambda$ and $h>0$
		\State 1. Compute $\widehat L(\lambda) = \{x: \widehat p_h(x)\ge \lambda\} $.
		\State 2. Construct a graph  $\mathbb G_{h,k}$ with nodes 
		$$\widehat D(\lambda)=\{X_i\}_{i=1}^n\cap \widehat L(\lambda)$$ and edges $(X_i, X_j)$  if  
		$X_i $ and $X_j$ belong to the same connected component of $\widehat L(\lambda)$. 
		\State 3. Compute $\mathbb C(h,\lambda)$, the graphical connected components of $\mathbb G_{h,\lambda}$.
		\OUTPUT $\widehat{T}_n = \{ \mathbb C(h,\lambda) , \lambda \geq 0 \}$.
		\caption{Clustering based on connected components}
		\label{algorithm:clustering cc}
	\end{algorithmic}
\end{algorithm}
We start by showing that for generic $\alpha>0$, if $p \in \Sigma(L,\alpha)$, level sets of  KDE estimator are good approximations of 
the corresponding population quantities. 
\begin{lemma}\label{lemma:standard-level-inequality}
	Assume that $p \in \Sigma(L,\alpha)$, where $\alpha > 0$,  and let $K$ be a $\alpha$-valid kernel.
	Then, there exist constants $C_1$ and $C_2$, depending on
	$\|p\|_{\infty}$, $K$,  $L$ and $d$   such that if $h=C_1\frac{1}{n^{1/(2\alpha+d)}}   $, then
	with probability $1-1/n$, uniformly over all $\lambda>0$,
	\begin{equation}
	\begin{aligned}
	&\left\{x \colon  p(x) \ge \lambda +C_2 \left( \frac{\log(n)}{n} \right)^{\alpha/(2\alpha+d)}\right \}  \subset
	\{ x \colon \widehat p_h(x) \ge \lambda \}\subset \left\{ x \colon p(x) \ge \lambda
	-C_2 \left( \frac{\log(n)}{n} \right)^{\alpha/(2\alpha+d)} \right \}.   \\
	\end{aligned}
	\end{equation}
\end{lemma}

As a direct corollary of \Cref{lemma:standard-level-inequality}, we show that algorithm \ref{algorithm:clustering cc} is consistent with the optimal rate.
\begin{corollary} \label{coro:flat-upper}
	Let $h$ be chosen as in \Cref{lemma:standard-level-inequality}.	Under the assumptions of \Cref{lemma:standard-level-inequality}, the cluster tree returned by 
	\Cref{algorithm:clustering cc}  
	is
	$\delta$-consistent with probability at least $1-1/n$, where 
	\begin{equation}\label{eq:flat-upper}
	\delta\ge 3C_2 \left( \frac{\log n}{n} \right)^{\alpha/(2\alpha+d)},
	\end{equation}
	with  $C_2=C_2( \|p\|_{\infty},K,L,d)$ a constant independent of $n$ and $\delta$.
\end{corollary}
We remark that  \Cref{algorithm:clustering cc} is computationally  infeasible even  in small dimensions. This is mainly because it requires to compute   the level set  $\{x: \widehat p_h(x)\ge \lambda\}$ exactly to determine the clustering structure of the data points.  However  \Cref{algorithm:clustering cc}  does not require additional regularity conditions such as {\bf S$(\alpha)$} and {\bf C} to attain the minimax optimal rates.

 \subsubsection{Proofs in \Cref{section:KDE}}
 
 \begin{proof}[Proof of lemma \ref{lemma:standard-level-inequality}]

 	For any $x\in \mathbb R^d$,  with probability at least $1-1/n$
 	\begin{equation}\label{eq:bias-variance}
 	\begin{aligned}
 	|\hat p_h(x) -p(x) |
 	\le& |\hat p_h(x) -p_h(x) | +|p_h(x) -p(x)|\\
 	\le&   C_1(K, d,\|p\|_{\infty}) \sqrt{\frac{  \log n }{nh^d }}+ C_2(K,\alpha,L) h^\alpha\\
 	\end{aligned}
 	\end{equation}
 	where the second inequality follows from proposition \ref{prop:Steinward} and standard calculations for the bias. By taking 
 	$$h=h_n=\Theta\left(\frac{\log n}{n}\right)^{1/(2\alpha +d)}$$
 	in \eqref{eq:bias-variance},
 	$$\sup_{x\in \mathbb R^d} |\hat p_h(x) -p(x) | \le C( \| p\|_{\infty},K,L,\alpha ,d) \left(\frac{\log(n)}{n}\right)^{\alpha/(2\alpha +d)}$$
 	This completes the proof.
 \end{proof}
 \
 \newline
 \newline
 
 \begin{proof}[Proof of Corollary \ref{coro:flat-upper}]
 	Let $A$ and $A'$ be two given  connected subsets of $\mathbb R^d$. Suppose $\lambda>0$ satisfies  
 	$\lambda+3\delta =\inf_{x\in  A\cup A'} f(x) $ and that 
 	$A$ and $A'$ are  contained in two distinct connected components of $\{ p> \lambda\}$.
 	It suffices to show that the estimate cluster tree at 
 	$\{\hat p_h\ge \lambda+2\delta\}$ gives  correct labels to $A\cap\{X_i\}_{i=1}^n$ and $A'\cap\{X_i\}_{i=1}^n$,
 	where 
 	$$ h=h_n=\Theta\left(\frac{\log n}{n} \right)^{1/(2\alpha+d)}$$
 	\begin{itemize}
 		\item Since $A$, $A'$ are connected and
 		$$ A,\  A'\subset \{p\ge 3\delta+\lambda\}\subset \{ \hat p_h\ge 2\delta+\lambda\},$$
 		$A$ and $A'$ each belongs to the connected component of $\{\hat p_h\ge 2\delta+\lambda\}$. Therefore the cluster tree at  $\{\hat p_h\ge 2\delta+\lambda\}$ will assign $A\cap \{X_i\}_{i=1}^n$ the same label. This is also true for $A'\cap \{X_i\}_{i=1}^n$.
 		\item It remains  to show that  $A$ and $A'$ are in the two distinct connected components of $\{\hat p_h\ge 2\delta+\lambda\} $. 
 		For the sake of contradiction, suppose that $A$ and $A'$ are in the same connected components of $\{\hat p_h\ge 2\delta+\lambda\} $.
 		Since 
 		$$\{ \hat p_h\ge \lambda+2\delta \}\subset \{p\ge \lambda+\delta \}\subset \{p> \lambda\} ,$$
 		$A$ and $A'$ are in the same  connected components of $\{p> \lambda\} $. This is a contradiction.
 	\end{itemize}
 \end{proof}

\section{Proofs from Section \ref{section:gap}}

\subsection{Proof of \Cref{prop:consistency-gap}}
We first prove two simple technical lemmas, that will also be used in the proof of \Cref{prop:lower gap level set}. 

\begin{lemma} \label{lemma:inclusionofgap}
	Suppose that $\epsilon > 2a_n$, where 
	\[
	\sup_{x\in \mathbb R^d} |\hat p_h(x)-p_h(x)|\le a_n,
	\]
	and let 
	$\lambda \in (\lambda_*+a_n,\lambda^* -a_n)$.
	Then,
	$$ S_{-h}\cap \{X_i\}_{i=1}^n  \subset \hat D_h(\lambda)\subset S_{h} , $$ 
	where $S=\{p\ge \lambda^* \}$ and
	$$\hat D_h(\lambda)= \left\{x: \hat p_h(x)\ge \lambda \right\} \ \bigcap \ \{X_i\}_{i=1}^n.$$
\end{lemma}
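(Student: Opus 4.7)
The plan is to establish both inclusions as straightforward consequences of the sup-norm bound $\|\hat p_h - p_h\|_\infty \le a_n$ combined with the structural implications of the gap hypothesis. Recall that under the spherical-kernel setup, $p_h(x) = P(B(x,h))/(h^d V_d)$, so the quantity $p_h(X_i)$ can be read off from whether $B(X_i,h)$ lies inside $S$, inside $S^c$, or straddles their common boundary. This, combined with the gap condition $\inf_{x\in S} p(x) \ge \lambda^*$ and $\sup_{x\in S^c} p(x)\le \lambda_*$, gives two-sided control on $p_h$ in the relevant regimes.

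For the first inclusion $S_{-h}\cap \{X_i\}_{i=1}^n \subset \hat D_h(\lambda)$, the plan is: if $X_i \in S_{-h}$, then by definition $B(X_i,h)\subset S$, so the gap hypothesis yields $p(z)\ge \lambda^*$ for all $z\in B(X_i,h)$, and integrating gives $p_h(X_i)\ge \lambda^*$. Applying the event $\|\hat p_h-p_h\|_\infty\le a_n$ then produces $\hat p_h(X_i)\ge \lambda^*-a_n>\lambda$, where the strict inequality uses the upper endpoint $\lambda < \lambda^*-a_n$ of the assumed range.

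For the second inclusion $\hat D_h(\lambda)\subset S_h$, I would argue by contrapositive. Suppose $X_i\notin S_h$. By definition of $S_h$ this means $B(X_i,h)\cap S=\emptyset$, i.e.\ $B(X_i,h)\subset S^c$, where the gap condition gives $p\le \lambda_*$. Integrating yields $p_h(X_i)\le \lambda_*$, and then $\hat p_h(X_i)\le \lambda_* + a_n <\lambda$, using now the lower endpoint $\lambda > \lambda_*+a_n$. Hence $X_i\notin \hat D_h(\lambda)$, completing the inclusion.

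There is really no main obstacle; the proof is a careful bookkeeping of the gap of size $\epsilon$ together with the kernel identity $p_h(x)=P(B(x,h))/(h^dV_d)$. The role of the assumption $\epsilon > 2a_n$ is exactly to guarantee that the admissible range $(\lambda_*+a_n,\lambda^*-a_n)$ for $\lambda$ is nonempty, so both strict inequalities above are simultaneously available.
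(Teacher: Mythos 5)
Your proof is correct and takes essentially the same route as the paper: both inclusions follow from the identity $p_h(x)=P(B(x,h))/(h^dV_d)$, the gap condition on $p$ over balls contained in $S$ or $S^c$, and the deviation bound $\|\hat p_h-p_h\|_\infty\le a_n$. Phrasing the second inclusion as a contrapositive is a cosmetic difference from the paper's direct argument, and your remark on the role of $\epsilon>2a_n$ (nonemptiness of the admissible range for $\lambda$) is consistent with what the paper uses implicitly.
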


\begin{proof}[Proof of lemma \ref{lemma:inclusionofgap}]\
	For the first inclusion, suppose $X_j\in S_{-h}  \cap  \{X_i\}_{i=1}^n.$
	Then $B(X_j,h)\subset S $. Since $K$ is supported on $B(0,1)$,
	\begin{equation}\label{eq:Klambda}
		p_h(X_j)=\frac{1}{V_dh^d}\int_{B(X_j,h)} p(y) dy\ge \lambda^* .
	\end{equation}
	As a result, $$\hat p_h(X_j)\ge  p_h(X_j) - a_n \ge \lambda^*-a_n \ge\lambda,$$
	which implies that $ X_j\in \hat D_h(\lambda)$.
	For the second inclusion, if $X_j \in \hat D_h(\lambda)$, then 
	$\hat p_h(X_j)\ge \lambda$.
	So $$p_h(X_j)\ge \hat{p}_h(X_j) - a_n \geq  \lambda-a_n>\lambda_*$$
	However, for any point $x\in S_{h}^c$, since  $B(x,h)\subset S^c$,
	$p_h(x)\le\lambda_*$ (see \eqref{eq:Klambda}). 
	So $X_j\in\hat D_h(\lambda)$ implies $X_j \in S_h$.
\end{proof}
\begin{lemma}\label{lemma:inclusionofgap2}
	Under the same assumption as in \Cref{lemma:inclusionofgap}, suppose further
	that $\lambda^*>a_n$.
	Let $\hat L (\lambda)= \bigcup_{X_i\in \hat D_h(\lambda)} B(X_i,h)$
	and $\mathcal C$ be any connected components of $S$.
	Then $\mathcal C_{-2h} \subset \hat L(\lambda).$
\end{lemma}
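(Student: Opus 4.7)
\noindent\textbf{Proof plan for Lemma \ref{lemma:inclusionofgap2}.}
The plan is to argue pointwise: fix an arbitrary $x\in\mathcal{C}_{-2h}$, exhibit a single sample point $X_j$ within distance $h$ of $x$ that lies in $\widehat{D}_h(\lambda)$, and then conclude $x\in B(X_j,h)\subset \widehat{L}(\lambda)$. All three ingredients needed for this are already assembled: the deterministic ``shrinking'' geometry coming from $x\in\mathcal{C}_{-2h}$, the mean-value computation of $p_h$ on balls entirely inside $S$ (which drove \eqref{eq:Klambda} in Lemma \ref{lemma:inclusionofgap}), and the uniform closeness $\|\widehat p_h-p_h\|_\infty\le a_n$.

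First I would unpack $\mathcal{C}_{-2h}$ geometrically. By definition, $x\in\mathcal{C}_{-2h}$ means $B(x,2h)\subset \mathcal{C}\subset S$. A triangle inequality then gives, for any $y\in B(x,h)$, the inclusion $B(y,h)\subset B(x,2h)\subset S$, that is, $y\in S_{-h}$. Hence $B(x,h)\subset S_{-h}$. In particular, $B(x,h)\subset S$, so applying the identity $p_h(x)=(h^d V_d)^{-1}P(B(x,h))$ together with $\inf_{z\in S}p(z)\ge\lambda^*$ yields $p_h(x)\ge\lambda^*$, exactly as in \eqref{eq:Klambda}.

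Second I would transfer this bound to the empirical density. On the good event $\sup_z|\widehat p_h(z)-p_h(z)|\le a_n$, we obtain
\[
\widehat p_h(x)\;\ge\; p_h(x)-a_n\;\ge\;\lambda^*-a_n\;>\;\lambda\;>\;0,
\]
where the strict inequalities use $\lambda<\lambda^*-a_n$ and $\lambda>\lambda_*+a_n\ge 0$ (guaranteed by the hypothesis $\epsilon>2a_n$ and $\lambda^*>a_n$). Since $\widehat p_h(x)=(nh^d V_d)^{-1}|B(x,h)\cap\{X_i\}_{i=1}^n|$, positivity of $\widehat p_h(x)$ forces the existence of at least one sample point $X_j\in B(x,h)$.

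Third I would invoke Lemma \ref{lemma:inclusionofgap}. By step one, $X_j\in B(x,h)\subset S_{-h}$, so $X_j\in S_{-h}\cap\{X_i\}_{i=1}^n\subset \widehat D_h(\lambda)$. Consequently $x\in B(X_j,h)\subset \widehat L(\lambda)$, finishing the proof. There is no substantive obstacle: the lemma is a direct geometric corollary of Lemma \ref{lemma:inclusionofgap}, and the only subtlety is to verify that the chain of inequalities $\widehat p_h(x)\ge\lambda^*-a_n>\lambda>0$ is strict enough to guarantee at least one sample point in $B(x,h)$, which is precisely where the assumptions $\epsilon>2a_n$ and $\lambda^*>a_n$ enter.
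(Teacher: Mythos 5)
Your proposal is correct and follows essentially the same route as the paper's proof: fix $x\in\mathcal{C}_{-2h}$, show $B(x,h)\subset S_{-h}$, deduce $p_h(x)\ge\lambda^*$ and hence $\widehat p_h(x)>0$ to force a sample point $X_j\in B(x,h)$, then invoke Lemma~\ref{lemma:inclusionofgap} to place $X_j$ in $\widehat D_h(\lambda)$ and conclude $x\in B(X_j,h)\subset\widehat L(\lambda)$. Your version is slightly more careful than the paper's, which asserts both $B(x,h)\subset S$ and $B(x,h)\subset S_{-h}$ without spelling out the triangle-inequality step that you make explicit; the extra chain $\widehat p_h(x)\ge\lambda^*-a_n>\lambda>0$ is harmless but unnecessary, since $\lambda^*>a_n$ alone already yields positivity.
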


\begin{proof}[Proof of lemma \ref{lemma:inclusionofgap2}]
	Let $x\in \mathcal C_{-2h}$. Then, $B(x,h)\subset S$, which implies, by
	\eqref{eq:Klambda}, that  
	$p_h(x)\ge \lambda^*$ and therefore that
	$$\hat p_h(x) \ge  p_h(x) - a_n \geq \lambda^*-a_n>0 .$$
	Therefore,  $B(x,h)\cap\{X_i\}_{i=1}^n$ is not empty -- otherwise $\hat p_h(x)=0 $ -- so
	that there exists a sample point, say $X_j$,
	in $B(x,h)$.
	Since $B(x,h)\subset S_{-h}$, we conclude that $X_j\in S_{-h}$. By lemma
	\ref{lemma:inclusionofgap} we then have that $X_j\in \hat D_h(\lambda)$.
	This shows that if $x\in \mathcal C_{-2h}$, then there exists some $X_j \in \hat D_h(\lambda)$ such that $x\in B(X_j,h)$. This finishes the lemma.
\end{proof}

\begin{proof}[Proof of \Cref{prop:consistency-gap}] 
Let
$$a_n  =C_1 \sqrt{\frac{\log(n) +\log (1/ h)}{nh^d}} $$  be  defined as in \eqref{eq:an}.
Then by \eqref{eq:variance of density},
\begin{equation}
	P\left(\sup_{x\in \mathbb R^d} |\hat p_h(x)-p_h(x)|\le a_n \right )\ge 1-1/n.
\end{equation}

Denote $h=C(\frac{\log (n) }{n\epsilon^2})^{1/d}$ where $C$ is chosen such that 
$3a_n\le \epsilon $. 
	\begin{itemize} 
		\item [Step 1.] Suppose $A_{2h}\subset \mathcal C_{i}$. Then $A\subset \mathcal C_{i,-2h} $.
		Then by \Cref{lemma:inclusionofgap2}, one has 
		$A\subset \mathcal C_{i,-2h}\subset \hat L(\lambda)$.
		Since data points in connected components of $\hat L(\lambda)$ have the same label, and $A$ is contained in only one connected components of 
		$\hat L(\lambda)$, points in  $A\cap \{X_i\}_{i=1}^n$ have the same labels.
		\item[Step 2.]Suppose A1 holds.
		Since $d(\mathcal C_{i} ,\mathcal C_{j})>4h$, $\{\mathcal C_{i,2h}\}_{i=1}^I$ are pairwise disjoint.\\
		Since $\hat D_h(\lambda_k)\subset \bigcup_{i=1}^I \mathcal C_{i,h}$, this means for any $i,j$ there is no edges connect $\hat D_h(\lambda_k)\cap \mathcal C_{i,h}$ and 
		$\hat D_h(\lambda_k)\cap \mathcal C_{j,h} $. 
		Since $A$ and $A'$ belong to  distinct members of $\{\mathcal C_i\}_{i=1}^I$, labels in $A\cap\{X_i\}_{i=1}^n$ and  in $A'\cap \{X_i\}_{i=1}^n$ are different. 
	\end{itemize}
\end{proof}

\subsection{Proof of  \Cref{prop:support estimate upper} }

\begin{proof}[Proof of \Cref{prop:support estimate upper}]
Let
$$a_n  =C_1 \sqrt{\frac{\log(n)+\log (1/ h)}{nh^d}} $$  be  defined in \eqref{eq:KDE risk}.
Then by \eqref{eq:variance of density}.
	\begin{equation}
		P\left(\sup_{x\in \mathbb R^d} |\hat p_h(x)-p_h(x)|\le a_n \right )\ge 1-1/n.
	\end{equation}
	Denote $h=C(\frac{\log(n) }{n\epsilon^2})^{1/d}$ where $C$ is chosen such that 
	$3a_n\le \epsilon $. 
	Denote $\lambda_k= \frac{k}{nh^dV_d}$.
 Therefore 
	$\lambda^*-\lambda_*\ge 3a_n .$
	\\
	Consequently $\lambda$ is well defined and one has
	$$\lambda_* +a_n\le \lambda < \lambda^*-a_n. $$
	By lemma \ref{lemma:inclusionofgap}, the nodes
	of $\mathbb G_{h,k} $ are contained in $ S_h$. Thus
	\begin{equation}\label{eq:support upper}
		\bigcup_{X_j \in\mathbb G_{h,k} } B(X_j,h) \subset S_{2h}.
	\end{equation}
	By lemma \ref{lemma:inclusionofgap2}, 
	\begin{equation}\label{eq:support lower}
		S_{-2h} \subset\bigcup_{X_j \in \mathbb G_{h,k}} B(X_j,h).
	\end{equation}
	Since $h_0\ge h $ using  assumption A3 then 
	$$\mathcal L(\hat S \triangle S) \le \mathcal L(S_{2h} \backslash S) 
	+\mathcal L(S_h \backslash S_{-2h} )\le C_0 h .$$
\end{proof}

\subsection{Proof of \Cref{prop:lower gap level set}}
\label{sec:level-set-gap}

We begin by constructing of a well-behaved class
of sets  satisfying the boundary
regularity condition {\bf (R)}. These sets will then be used to define high-density
clusters in the proof of \Cref{prop:lower gap level set}. Sets satisfying the properties
given in the next definition are well known in the literature on support
estimation: see, e.g.,   
\cite{korostelev2012minimax}. For completeness we also show that 
they satisfy the boundary
regularity condition {\bf (R)}.
\begin{definition}\label{def:domains-reg}
	Denote  by $\mathcal G_{d}(L)$ the class of all domains in $[0,1]^d$ satisfying 
	$$\left\{(x_1 ,\ldots, x_d) : (x_1,\ldots, x_{d-1})\in [0,1]^{d-1}, \ \  0\le x_d \le g(x_1,\ldots,x_{d-1})  \right\},$$
	where $g:\mathbb R^{d-1} \to \mathbb R$ satisfies
	\begin{itemize}
		\item $ 1/2\le |g(x)|\le 3/2 $ for all $x\in [0,1]^{d-1}$
		\item $| g(x)-g(x')| \le L|x-x'|$ for all $x,x'\in \mathbb R^{d-1}$.
	\end{itemize}
	
\end{definition}

\begin{lemma}
	\label{lemma:preserve distance volume}
	There exist a  constants $h_0$ only depending only on $L$   such that 
	for any $\Omega \in \mathcal G_{d}(L) $, one has for any $0\le h\le h_0$,
	$$\mathcal L(\Omega_{h} \backslash \Omega_{-h}) \le C_0 h,$$
	where $\Omega_{h}=\bigcup_{x\in\Omega} B(x,h) $, $\Omega_{-h} = \{x\in \Omega: B(x,h)\subset\Omega\}$ and $C_0$ is some constant depending on $d$.
\end{lemma}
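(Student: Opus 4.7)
The plan is to show that $\Omega_h \setminus \Omega_{-h}$ is contained in the $h$-tubular neighborhood of $\partial\Omega$, and then bound the volume of that tube by decomposing $\partial\Omega$ into its three natural pieces (top, bottom, sides) and estimating each by Fubini. First I would verify the set-theoretic inclusion $\Omega_h \setminus \Omega_{-h} \subset \{x : \mathrm{dist}(x, \partial\Omega) \le h\}$: a point in $\Omega_h \setminus \Omega$ lies within distance $h$ of some $y \in \Omega$ and the segment $[x,y]$ must cross $\partial\Omega$; symmetrically, a point in $\Omega \setminus \Omega_{-h}$ has $B(x,h) \cap \Omega^c \ne \emptyset$ and the same crossing argument applies.

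Second, I would decompose $\partial\Omega \subset G \cup B \cup S$, where $G = \{(x', g(x')) : x' \in [0,1]^{d-1}\}$ is the top graph, $B = [0,1]^{d-1} \times \{0\}$ is the bottom face, and $S$ is the union of the $2(d-1)$ lateral faces $\{x_i \in \{0,1\}\} \cap \overline{\Omega}$ for $i = 1, \ldots, d-1$. For the graph piece, any $x = (x', x_d)$ within distance $h$ of $G$ admits some $y' \in [0,1]^{d-1}$ with $|x' - y'| \le h$ and $|x_d - g(y')| \le h$; if $x' \in [0,1]^{d-1}$ the Lipschitz property of $g$ gives $|g(x') - g(y')| \le Lh$, hence $|x_d - g(x')| \le (1+L)h$, and otherwise (when $x'$ lies in the $h$-neighborhood of $[0,1]^{d-1}$ but outside it) a projection argument yields an analogous bound. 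Fubini over $x'$ then produces $\mathcal L(N_h(G)) \le (1+2h)^{d-1} \cdot 2(1+L)h$. The bottom is handled by the crude inclusion $N_h(B) \subset [-h, 1+h]^{d-1} \times [-h, h]$, of volume at most $2^d h$ for $h \le 1/2$; each of the $2(d-1)$ side faces lies inside an affine $(d-1)$-dimensional region of $(d-1)$-area at most $3/2$, whose $h$-tube in $\mathbb R^d$ has volume bounded by $2h \cdot (1+2h)^{d-2}(3/2 + 2h) \le C(d) h$.

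The main obstacle is handling the graph piece rigorously despite $g$ being merely Lipschitz, and the key observation is that heavier machinery such as the area formula or Rademacher's theorem is not required: the Lipschitz bound on $g$ together with the definition of $N_h(G)$ yields the vertical width estimate $2(1+L)h$ directly, and the Fubini step closes the argument. Choosing $h_0 = h_0(L)$ small enough (for instance $h_0 \le 1/(4(1+L))$) keeps every expansion controlled, ensures $\Omega_{-h}$ remains non-empty (since $g \ge 1/2$), and validates the inequalities $(1+2h)^{d-1} \le 2^{d-1}$ used above. Summing the three contributions gives $\mathcal L(\Omega_h \setminus \Omega_{-h}) \le C_0 h$ with $C_0 = C_0(d, L)$, which is the desired bound.
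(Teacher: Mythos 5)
Your argument is correct, and it takes a genuinely different route from the paper's. You prove the result directly by showing that $\Omega_h \setminus \Omega_{-h}$ sits inside the $h$-tube around $\partial\Omega$, then decompose $\partial\Omega$ into the graph of $g$, the bottom face, and the $2(d-1)$ lateral faces, and bound each tube's volume by Fubini; the graph piece is handled purely via the global Lipschitz bound on $g$, which gives the vertical width $2(1+L)h$, and the flat pieces are crude box inclusions. The paper instead introduces the coordinate change $\phi(\underline x, x_d) = (\underline x, x_d\, g(\underline x))$, notes that $\phi$ maps $[0,1]^d$ onto $\Omega$ bijectively, bounds $\|\nabla\phi\|_{\mathrm{op}}$ and $\|\nabla\phi^{-1}\|_{\mathrm{op}}$ as well as the Jacobian determinant $g(\underline x)\in[1/2,3/2]$, and transports the estimate from the known case of the unit cube via these bi-Lipschitz and volume-distortion bounds. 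Your approach is more elementary and has one practical advantage: it never needs $\phi^{-1}$ to be defined or Lipschitz outside $[0,1]^{d-1}$, whereas the paper's proof implicitly leans on $g$ staying bounded away from zero on a neighborhood, a point it does not spell out. Both proofs ultimately yield a constant $C_0$ that depends on $L$ as well as $d$ (yours through the factor $1+L$ in the graph tube, the paper's through $\|\nabla\phi^{\pm 1}\|_{\mathrm{op}}$), so you are in the same position as the paper with respect to the slight imprecision in the lemma's stated dependence of $C_0$ on $d$ only; this is harmless since the lemma is applied with $L = C(d)$.
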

\begin{proof}[Proof of lemma \ref{lemma:preserve distance volume}]
	Given $\Omega \in \mathcal G_d(L) $, let $g$ be the corresponding map
	as in definition \ref{def:domains-reg}. Denote $\underline x$ be a generic point in $\mathbb R^{d-1}$.
	Consider the change of coordinate map $\phi : \mathbb R^d \to \mathbb R^d$ defined 
	as $$\phi (\underline x,x_d) =(\underline x, \  x_d g(\underline x)) .$$
	The inverse map $\phi^{-1}:\mathbb R^d \to \mathbb R^d$ where 
	$\phi^{-1} (\underline x,x_d) =(\underline x, \  x_d/ g(\underline x))$  is also well defined as $g>0$. \\
	Observe that $\phi([0,1]^{d})=\Omega$, and there exists a constant $C(d)$ depending only on $d$ such that $[0,1]^d$ satisfies condition A3 with $h_0=1/2$ and $C_0=C(d)$.
	Thus in order to justify the lemma, it suffices to show that the maps $\phi$ and $\phi^{-1}$
	only distort the distance and volume by factors depending on $L$ only.\\
	To be more precise, it suffices to show that for some constant $L'$ depending on $L$ and some absolute constant $C$,
	\begin{align*}
		|\phi^{-1}(x)-\phi^{-1}(x')|\le L'|x-x'| &\text{ and }|\phi(x)-\phi(x')|\le L'|x-x'| \text{ for all $x ,x' \in [-2,2]^d$} 
		\\
		\mathcal L( \phi^{-1}(B))\le C\mathcal L(B) &\text{ and } \mathcal L( \phi(B))\le C\mathcal L(B) \text{ for any $B\subset [-2,2]^d$ }.
	\end{align*} 
	Since the calculations
	of $\phi$ are similar to that  of $\phi^{-1}$, only the former one is shown in this case. 
	\begin{itemize}
		\item[Step 1.]To show that $\phi(x)$ is Lipschitz, it suffices to bound
		$\|\nabla \phi \|_{op}$. 
		\begin{align*}
			\nabla \phi(\underline x,x_d)=(\frac{\partial \phi_i}{\partial x_j})=
			\begin{bmatrix}
				1 & 0 & \dots & 0 & x_d\frac{\partial g (\underline x)}{\partial x_1} \\
				0 & 1 & \dots & 0 & x_d\frac{\partial g (\underline x)}{\partial x_2}   \\
				\vdots & \vdots & \ddots & \vdots  & \vdots \\
				0 & 0 & \dots & 1 & x_d\frac{\partial g (\underline x)}{\partial x_{d-1}}   \\
				0 & 0 & \dots & 0 & g(\underline x)   
			\end{bmatrix}
		\end{align*}
		A straight forward calculations shows that for any $(\underline x, x^d)\in[-2,2]^d $, $$\|\nabla \phi\|_{op}\le 1+x_0\|\nabla g(\underline x)\|_2+g(\underline x)\le 5/2+2L .$$ \\
		\item[Step 2.]The change of variables equations gives
		$$\mathcal L(\phi(B))= \int_{\phi (B)} d\mathcal L 
		=\int_{B}|\det (\nabla \phi (x))|dx.
		$$
		Since $\det (\nabla \phi (\underline x, x_d)) =g(\underline x)$ which is bounded above by  $3/2$, one has $ \mathcal L(\phi(B)) \le (3/2) \mathcal L(B)$.
	\end{itemize}
\end{proof}

\begin{proof}[Proof of \Cref{prop:lower gap level set}]
	
	Let $0<\delta\le 1/16$ be depending on $ \epsilon$ which will be specified later. For some constant depending $C(d) $ only depending on $ d$,
	it is desired to  construct $\{S_i\}_{i=1}^M\in\mathcal G_d(C(d))$ such that 
	$\mathcal L(S_i\triangle S_j) \ge \delta$ and that $M$ is of order $\delta^{-d+1}$.\\
	
	\begin{itemize}
		\item[Step 1.]Consider a hyper rectangle $[0,2\delta]\times [0,\delta]^{d-2} $  in $\mathbb R^{d-1}$. 
		One can place $N=\lfloor\delta^{-1}\rfloor^{d-1}/2$ such hyper rectangles into $[0,1]^{d-1} $ without having any two intersect.
		Denote theses hype rectangles by $\{R_i\}_{i=1}^N$. $R_i$ is composed of two hypercube of dimension
		$[0,\delta]^{d-1}$, which are denoted as $R_i^0$ and $R^1_i$. 
		\\ 
		\\
		Denote $\underline x$ to be a generic point in $\mathbb R^{d-1}$.
		One can  defined a map 
		$g:[-\delta/2,\delta/2]^{d-1} \to \mathbb R $ by
		$$
		g(\underline x)=\begin{cases}
		C(d)\left( \delta/2-\|\underline x\|_{\mathbb R^{d-1}}\right), \text { if }  \|\underline x\|_{\mathbb R^{d-1}}\le \delta/2 \\
		0, \text { otherwise .}
		\end{cases}
		$$
		The region
		$$\mathcal C = \{(\underline x, x_d) : \underline x\in [-\delta/2,\delta/2]^{d-1}, 0\le x_d\le g(\underline x)\} $$
		defines a region of hyper cone in $\mathbb R^d$ and $C(d) $ is set so that the cone volume  
		\begin{align*}
			\int_{[-\delta/2,\delta/2]^{d-1}} g(\underline x)d\underline  x=\delta^{d}.
		\end{align*}
		Let $g^0_{i}$ and $g^1_i$  be the corresponding map on $R_i^0$ and $R^1_i$, as the later ones are  copies of $[-\delta/2,\delta/2]^{d-1}$. 
		\\
		\item[Step 2.]Let $W =\{w =(w_1,\ldots, w_N) , w_j\in \{0,1\}\}$. By Varshamov-Gilbert lemma, there exist
		$w^1, \ldots,w^M \in W$ such that (i) $M\ge 2^{N/8}$ (ii) $H(w^{i} ,w^{j})\ge N/8 $, where $H$ denote the hamming distance. \\
		For $1\le j\le M $, let $G_j:[0,1]^{d-1} \to \mathbb R^d$ be defined as 
		$$G_j(\underline x) =1/2+\sum_{i=1}^N  g_i^{w^j_i}(\underline x).$$
		Consider 
		$$ S_j=\{(\underline x, x_d) : \underline x \in [0,1]^{d-1}, 0\le x_d\le G_j(\underline x) \}
		$$
		Thus by construction $G_j\in \mathcal G_d(C(d))$ in definition \ref{def:domains-reg}.
		For $l=0,1$ and $1\le i\le N$, define
		$$ \mathcal C_i^l=\{(\underline x,x_d) : \underline x \in R_i^l, 0\le x_d\le g^{l}_i(\underline x)\}.$$
		So $\mathcal C_j^l$ are non-overlapping cones with volume being $\delta^d$, which are indexical copies of  $\mathcal C$.
		
		\item[Step 3.] Let $\{f_j\}_{j=1}^M $ be such that 
		$$f_j=\begin{cases}
		1/4, \text{ if } x\in [0,a]^d\backslash S_j\\
		1/4+\epsilon, \text{ if } x\in S_j\\
		0, \text{ otherwise}
		\end{cases}
		$$
		Since $1=\int f_j =a^d/4+ (1/4+\epsilon)(3/4)^d \le a^d /4 + (1/2)(3/4)^d,$
		$a$ has to be greater than 1. 
		Thus $S_j\subset [0,a]^d $ and so $S_j$ can be viewed as the support of $f_i$ at the gap.
		\item[Step 5.] For any $i$ and $j$
		Since $f_i$ and $f_j$ are only possibly different on $\{\mathcal C_k^0 \cup\mathcal C_k^1 \}_{k=1}^N.$ Also $f_i\not = f_j $ within $\mathcal C_k^0 \cup\mathcal C_k^1 $
		if and only if $w^i_k\not = w^j_k $. Thus 
		the $KL(f_i,f_j)$ is determined by 
		\begin{align*}
			KL(f_i ,f_j) =\sum_{k=1}^N\int_{\mathcal C^0_k\cup \mathcal C^1_k} f_i\log\left(\frac{f_i}{f_j}\right) =\sum_{k: w^i_k \not = w^j_k }\int_{\mathcal C^0_k\cup \mathcal C^1_k} f_i\log\left(\frac{f_i}{f_j}\right)
		\end{align*}
		Suppose $w^i_k \not = w^j_k$, then
		\begin{align*}
			\int_{\mathcal C^0_k\cup \mathcal C^1_k} f_i\log\left(\frac{f_i}{f_j}\right)=
			\int_{\mathcal C} (1/4+\epsilon)\log\left(\frac{1/4+\epsilon}{1/4}\right) +(1/4)\log\left(\frac{1/4}{1/4+\epsilon}\right)  \le 4 \delta^d \epsilon^2.
		\end{align*}
		So $KL(f_i,f_j) \le H(w^i,w^j)\delta^d \epsilon^2 \le N \delta^d \epsilon^2$.\\
		\item[Step 6.] To apply Fano's lower bound lemma (see \cite{tsybakov2009introduction}) it suffices to have
		$$ \max_{i\not = j}KL(P_i, P_j)  \le \frac{\log M}{16n}
		$$
		Since $M\ge 2^N/8 $ it suffices to have 
		$ n N \delta^d \epsilon^2 \le N\log(2)/128$.
		Thus it suffices to have $\delta^d = \min \{a \frac{1}{n\epsilon^2},1/16\} $ for some absolute constant $a$.
		\item[Step 7.] By Fano's lemma, the minimax rate is bounded from below above by
		$$\mathcal L(S_i \triangle S_j)=H(w^i,w^j) 2\mathcal L(\mathcal C)\ge (N/8)2 \delta^d = c\delta,$$
		for some absolute constant $c$.
	\end{itemize}
\end{proof}

\subsection{Lower bounds of clustering at the Gap}
\label{section:lower bound clustering at the gap}
We cite Theorem
VI.1 of
\cite{chaudhuri2014consistent} to demonstrate that the scaling in \Cref{prop:consistency-gap} is   minimax optimal.

\begin{proposition}  \label{lemma:Dasgupta}
	Consider a finite family of density functions $F=\{f_j\}$. 
	Suppose all $f_j\in F$ have gap of size $\epsilon>0$ at level $\lambda_*$. This means that for any $j$, $\{f_j\ge\lambda_*+\epsilon \}\cup \{f_j\le\lambda_* \}=\mathbb R^d$. For any $j$, let $\{\mathcal C_j^i\}_{i=1}^{I_j}$ be the connected components of $\{f_j\ge \lambda_*+\epsilon\}$ and $d(\mathcal C^i_{j},\mathcal C^{i'}_{j})\ge\sigma$ for $i\not = i'$. \\
	There exists subsets $A_j $ and $A'_j$ for density $f_j$ such that $A_{j,\sigma}\subset \mathcal C^i_{j}$ and $A_{j,\sigma}' \subset \mathcal C^{i'}_{j}$ with the following additional property.\\
	Consider any algorithm that is given $n \ge 100$ i.i.d. samples $\{X_i\}_{i=1}^n$ from some 
	$f_j \in F$ and, with probability at least 3/4, outputs a tree in which the smallest cluster containing $A_j\cap \{X_i\}_{i=1}^n$ is disjoint from the smallest cluster containing $A_j' \cap \{X_i\}_{i=1}^n$. Then
	there exists a constant $C(d)$ only depending on $d$ such that
	\begin{equation}\label{eq:clusteer.minimax}
	n\ge \frac{C(d)}{\sigma^d\lambda^*\epsilon^2}\log\frac{1}{\sigma^d\lambda^*}.
	\end{equation}
\end{proposition}
The proof of the proposition can be found in  Theorem
VI.1 of
\cite{chaudhuri2014consistent}. We omit the
details of the proof for brevity.

  \section{A Cluster Consistency Resut for General Densities}
  \label{sec:general}
  The type of cluster consistency result we have obtained for densities with
  gaps can be easily
  generalized to arbitrary densities. To that end, we will introduce the notion
  of $(\epsilon,\sigma)$-separated clusters and of $h$-thick clusters, where
  $\epsilon$, $\sigma$ and $h$ are positive numbers..
 
  \begin{definition}
      Let $\epsilon$ and $\sigma$ be positive numbers. Two connected subsets $A$ and $A'$ of the support of $P$ are said to be $(\epsilon,
      \sigma)$-separated when
      \begin{itemize}
\item they belong to different connected components of $L(\lambda^* -
    \epsilon)$, where $\lambda^* =
\inf_{x \in A \cup A} f(x) > \epsilon $, and 
\item    $\min_{k \neq l} \mathrm{dist}(\mathcal{C}_k, \mathcal{C}_l) > \sigma$,
    where $\mathcal{C}_1,\ldots,\mathcal{C}_m$ are the connected components of
    $L(\lambda^* - \epsilon)$.
      \end{itemize}
	  \end{definition}

In
addition to being well-separated, we further
require the clusters to be {\it thick}, in a sense made precise below.

  \begin{definition}
A subset $A$ is $h$-thick if $A_{-h} \neq \emptyset$.
  \end{definition}

 \begin{remark}{\bf Comparison with the separation criterion of \cite{chaudhuri2014consistent}.} 
The notion of $(\epsilon,\sigma)$-separated clusters is analogous to the
corresponding notion of separated clusters introduced in \cite{chaudhuri2014consistent}, with $\epsilon$ and
$\sigma$ quantifying the degree of ``vertical" and ``horizontal" separation
among clusters. There are however, two main differences. First, in our
definition the parameter $\epsilon$ is on the same scale as the density $p$ and,
therefore, represents  vertical separation among level sets in an additive and
not  multiplicative way.
Secondly, we use different parameters to measure the degree of horizontal
separation among clusters ($\sigma$) and the degrees of thickness of the
clusters ($h$); in contrast, \cite{chaudhuri2014consistent} rely on just one parameter ($\sigma$ in their
notation) to express both separation and thickness.
\end{remark}

With these general notions of thick and well-separated clusters in place, we
provide the following uniform consistency results for clustering, which applies
to arbitrary densities.

\begin{corollary}
Let $\{X_1,\ldots,X_n\}$ be an i.i.d. sample from a probability distribution $P$ with an arbitrary density $p$ and let $\epsilon>0$ and $\sigma>0$. Set $a_n=C_1 \sqrt{\frac{\log(n) +\log (1/ h)}{nh^d}}$  as in \eqref{eq:an} and suppose the
	input parameters $h$ satisfying
	\begin{equation}
	\sigma/4\ge h \ge C\left(\frac{\log(n)}{n \epsilon^2 }\right)^{1/d} \quad
	\text{and},
	\end{equation}
for any $C>0$ such that $2 a_n < \epsilon$. 
	Then   with probability at least $1-1/n$, uniformly over all
clusters $A'$ and $A$ that are $h$ thick and $(\epsilon, \sigma)$ separated, 
\begin{itemize}
		\item[i.] $A_{-h} \cap \{\ X_1,\ldots,X_n\}$ and $A'_{-h} \cap\{\ X_1,\ldots,X_n\}$, if non-empty, belong to distinct connected components of
$\mathbb{G}_{k,h}$;
\item[ii.]  all the sample points in  $A_{-h}$, if any, belong to the
		same connected component of $\mathbb{G}_{k,h}$
	\end{itemize}
	where $k = \lceil n h^d
	V_d\lambda \rceil$,	with  $\lambda \in (\lambda^* -\epsilon + a_n,\lambda^*- a_n ]$ and $\lambda^* = \inf_{x A \cup A'}f(x)$.
\end{corollary}

The proof of the previous corollary is almost the identical to the proof of \cite{chaudhuri2014consistent}
and is omitted.

\begin{remark} {\bf Optimality.}
The scaling of the parameters $(\epsilon,\sigma)$ is minimax optimal,
since
the construction used in \cite{chaudhuri2014consistent} yields 
$(\epsilon,\sigma)$-separated clusters that are also $h$ thick with $h =
\sigma/4$,  so that the
resulting lower bound applies to this case as well. 
Thus, DBSCAN delivers nearly minimax optimal cluster
consistency with respect to our definitions of separated and thick clusters. 
\end{remark}

\end{document}